\newtheorem{thm}{Theorem}
\newtheorem*{thm*}{Theorem}
\newtheorem{cor}[thm]{Corollary}
\newtheorem{lem}[thm]{Lemma}
\newtheorem{prop}[thm]{Proposition}
\newtheorem{rem}[thm]{Remark}
\DeclareMathOperator{\Aut}{Aut}
\DeclareMathOperator{\Out}{Out}
\renewcommand{\to}{\longrightarrow}
\newcommand{\Fq}{\mathbf{F}_q}
\newcommand{\Cl}{\mathrm{Cl}}
\newcommand{\GL}{\mathrm{GL}}
\newcommand{\PGL}{\mathrm{PGL}}
\newcommand{\PGammaL}{\mathrm{P\Gamma L}}
\newcommand{\SL}{\mathrm{SL}}
\newcommand{\PSL}{\mathrm{L}}
\newcommand{\GU}{\mathrm{GU}}
\newcommand{\SU}{\mathrm{SU}}
\newcommand{\PSU}{\mathrm{U}}
\newcommand{\Sp}{\mathrm{Sp}}
\newcommand{\PSp}{\mathrm{PSp}}
\newcommand{\POmega}{\mathrm{P}\Omega}
\newcommand{\F}{\mathbb{F}}
\newcommand{\Gammaone}{\Gamma}
\newcommand{\Sym}{\mathrm{S}}
\newcommand{\Alt}{\mathrm{A}}
\newcommand{\Mat}{\mathrm{M}}
\newcommand{\Sz}{\mathrm{Sz}} 
\newcommand{\cprod}{.}
\newcommand{\Cent}{\mathrm{Cent}}
\newcommand{\distinguish}[1]{#1'}
\begin{document}

\title{Perfect commuting graphs}%

\author{John R. Britnell}
\address{Department of Mathematics, Imperial College London, South Kensington Campus, London, SW7 2AZ, U.K.}
\email{j.britnell@imperial.ac.uk}

\author{Nick Gill}
\address{Department of Mathematics, University of South Wales, Treforest, CF37 1DL, U.K.}
\email{nicholas.gill@southwales.ac.uk}

\date{\today}%
\begin{abstract}
We classify the finite quasisimple groups whose commuting graph is perfect and we give a general structure theorem
for finite groups whose commuting graph is perfect.
\end{abstract}

\maketitle

\section{Introduction}

Let $\Gamma$ be a simple, undirected, finite graph with vertex set $V$. If $U\subseteq V$ then the \emph{induced
subgraph} of $\Gamma$ on $U$ is the graph $\Delta$ with vertex set $U$, and with two vertices connected in $\Delta$ if and only if they are connected in $\Gamma$.
The \emph{chromatic number} $\chi(\Gamma)$ is the smallest integer $k$ such that there exists a partition of
$V$ into $k$ parts, each with the property that it contains no two adjacent vertices. The \emph{clique number} $\Cl(\Gamma)$ is the size of
the largest complete subgraph of $\Gamma$. Clearly $\Cl(\Gamma)\le \chi(\Gamma)$ for any graph $\Gamma$. The graph
$\Gamma$ is {\it perfect} if $\Cl(\Delta)=\chi(\Delta)$ for every induced subgraph $\Delta$ of $\Gamma$.

Let $G$ be a finite group. The {\it commuting graph} $G$ is the graph $\Gammaone(G)$ whose vertices are the
elements of $G\backslash Z(G)$, with vertices joined by an edge whenever they commute. Some authors prefer not to
exclude the central elements of $G$, and nothing in this paper (barring the brief discussion of connectivity in \S\ref{s: literature}) depends significantly on which definition is used.

We are interested in classifying those finite groups $G$ for which the commuting graph $\Gammaone(G)$ is perfect. In this paper
we offer, in Theorem \ref{t: main}, a complete classification in the case of quasisimple groups. We use this
result to derive detailed structural information about a general finite group with this property,
in Theorem \ref{t: general}. The notation for quasisimple groups used in the statements of these theorems is explained
in \S\ref{s: background}.

\begin{thm}\label{t: main}
 Let $G$ be a finite quasisimple group and let $\Gammaone(G)$ be the commuting graph of $G$. Then $\Gammaone(G)$ is
 perfect if and only if $G$ is isomorphic to one of the groups in the following list:
 \begin{itemize}
   \item[] $\SL_2(q)$ with $q\geq 4$;
   \item[] $\PSL_3(2)$;
   \item[] $\PSL_3(4)$,\ $2\cprod \PSL_3(4)$,\ $3\cprod\PSL_3(4)$,\ $(2\times 2)\cprod\PSL_3(4)$,\
   $6\cprod \PSL_3(4)$,\ $(6\times 2)\cprod\PSL_3(4)$,\ $(4\times 4)\cprod\PSL_3(4)$,\ ${(12\times 4)\cprod\PSL_3(4)}$;
   \item[] $\Alt_6$, $3\cprod \Alt_6$, $6\cprod \Alt_6$;
   \item[] $6\cprod \Alt_7$;
   \item[] $\Sz(2^{2a+1})$ with $a\geq 1$;
   \item[] $2\cprod \Sz(8)$, $(2\times 2)\cprod\Sz(8)$.
 \end{itemize}
\end{thm}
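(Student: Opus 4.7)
The plan is to invoke the Strong Perfect Graph Theorem of Chudnovsky, Robertson, Seymour and Thomas, which asserts that a finite graph is perfect if and only if it contains no induced odd cycle of length at least five (an \emph{odd hole}) and no induced copy of the complement of such a cycle (an \emph{odd antihole}). The task therefore splits into two halves: verifying, for each $G$ on the list, that $\Gammaone(G)$ contains no odd hole or antihole, and conversely exhibiting such a subgraph in $\Gammaone(G)$ for every quasisimple $G$ omitted from the list. The Classification of Finite Simple Groups underlies the enumeration required for the second half.

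For the positive direction, the cleanest case is $\SL_2(q)$ with $q\geq 4$: every non-central element has abelian centraliser (a maximal torus, or a Sylow $p$-subgroup together with the centre). Whenever two non-central elements commute, their centralisers coincide, so $\Gammaone(\SL_2(q))$ is a disjoint union of cliques and hence trivially perfect. A related analysis handles the Suzuki family $\Sz(2^{2a+1})$, after separate treatment of the Sylow $2$-subgroup; this is a special $2$-group of class $2$ whose commuting structure is controlled by a non-degenerate commutator form on the elementary abelian quotient $S/Z(S)$, and from this structure perfectness can be read off. The remaining entries on the list --- $\PSL_3(2)$, the covers of $\Alt_6$, $\Alt_7$, $\PSL_3(4)$ and $\Sz(8)$ --- are of small enough order that $\Gammaone(G)$ can be analysed by explicit inspection of the conjugacy classes and their centralisers, or by direct computer-algebra verification.

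For the negative direction, CFSG partitions the quasisimple groups outside the list into alternating, classical Lie type, exceptional Lie type, and sporadic families, and in each the goal is to exhibit an induced odd cycle (a $5$-cycle suffices in practice). For $\Alt_n$ with $n\geq 7$ one constructs such a cycle from five permutations with carefully chosen disjoint-support patterns. For Lie type groups of rank at least $2$ --- in particular $\PSL_n(q)$ with $n\geq 3$ outside the small exceptions --- elements from distinct maximal tori, supplemented by appropriately chosen unipotent or Levi-type elements, provide the required commutation pentagon once the root system is used to control which pairs commute. Sporadic groups are handled individually using their known maximal subgroups and centraliser orders.

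The principal obstacle lies in the borderline cases, above all the various covers of $\PSL_3(4)$ and of $\Sz(8)$. Whether $\Gammaone(G)$ is perfect depends sensitively on the isogeny type of $G$: enlarging the centre may add chords to a potential odd hole and destroy it, or may remove edges in such a way as to create one, so no uniform argument treats all covers simultaneously. Consequently the proof must work through each cover in turn, exhibiting an odd hole in those absent from the list and proving its non-existence in those present. This cover-by-cover analysis, particularly for the eight specific covers of $\PSL_3(4)$ that appear in the statement, is expected to be the most intricate part of the argument.
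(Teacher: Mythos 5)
Your overall architecture---the Strong Perfect Graph Theorem to reduce perfection to the absence of induced odd holes and antiholes, CFSG to organise the case analysis, abelian-centraliser arguments for the positive cases, and explicit induced odd cycles propagated upward through subgroup containments for the negative ones---is exactly the paper's. The positive cases you describe go through essentially as you say: for $\SL_2(q)$ every non-central centraliser is abelian, so the commuting graph is a disjoint union of cliques; for $\Sz(q)$ the only non-central elements with non-abelian centraliser are involutions, and the involutions commuting with a fixed involution generate an elementary abelian group, so no vertex can lie on a forbidden subgraph. One correction of emphasis: the covers of $\PSL_3(4)$ do admit a uniform treatment. The paper labels each edge of the commuting graph on involutions by the commutator of lifts in the full cover $M=(12\times 4)\cprod\PSL_3(4)$; these labels lie in a central $C_2\times C_2$ which can be identified with the additive group of $\F_4$ and which carries a nondegenerate alternating form, and perfection of $\Gammaone(G)$ is read off from the image of this subgroup in $G$ (perfect if the image is trivial or all of $C_2\times C_2$, not perfect if it is $C_2$).

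The genuine gap is the family $\PSL_2(q)$ with $q$ odd and $q>9$. These groups are quasisimple, they are absent from the list (for odd $q\le 9$ they reappear as $\Alt_5$, $\PSL_3(2)$ and $\Alt_6$), and none of your negative-direction arguments reaches them: they have rank $1$, so the ``distinct maximal tori plus unipotent elements'' construction does not apply, and indeed every non-central element other than an involution has abelian centraliser, so by your own abelian-centraliser principle any forbidden subgraph must consist entirely of involutions. Establishing that the commuting graph on the single class of involutions contains an induced odd cycle is the hardest single computation in the paper: one shows this graph has no $4$-cycle, then counts the distance classes $\Omega_1,\Omega_2,\Omega_3$ from a fixed involution to force an induced $5$-cycle unless $q\le 17$; and for $q=13$ and $q=17$ there is \emph{no} induced $5$-cycle, so one must instead exhibit a $7$-cycle of conjugates $t,t^g,t^{g^2},\dots$ for a suitable $g$ of order $(q+1)/2$. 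The same caveat undercuts your claim that ``a $5$-cycle suffices in practice'': it fails for $\PSL_2(13)$, $\PSL_2(17)$ and $2\cprod\Alt_7$. Relatedly, your sketch for $\Alt_n$, $n\ge 7$, does not address the double covers $2\cprod\Alt_n$, where a cycle of involutions need not lift to an induced cycle; the paper circumvents this by using a $7$-cycle of $3$-cycles, since elements of order coprime to the centre lift with commuting behaviour matching that of their images.
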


Recall that a group is \emph{quasisimple} if it is a central extension of a simple group, and equal to its
derived subgroup. A \emph{component} of a finite group $G$ is a quasisimple subnormal subgroup of $G$. If $G$ is a group
such that $\Gammaone(G)$ is perfect, then every component of $G$ has a perfect commuting graph, and is therefore
isomorphic to one of the groups listed in Theorem \ref{t: main}. In fact we can say more.
\begin{thm}\label{t: general}
 Let $G$ be a finite group such that $\Gammaone(G)$ is perfect. Then $G$ has at most two components, and one of the
following statements holds.
\begin{enumerate}
\item $G$ has a single component, which is isomorphic to one of the groups in the following list:
\[
\SL_2(q),\ 6\cprod \Alt_6,\ (4\times 4)\cprod\PSL_3(4),\ (12\times 4)\cprod\PSL_3(4),\ \Sz(2^{2a+1}),\ 2\cprod \Sz(8),\ (2\times 2)\cprod\Sz(8).
\]
\item $G$ has a single component $N$, and the centralizer $\Cent_G(N)$ is abelian.
\item $G$ has two components $N_1,N_2$, each of which is isomorphic to one of the groups listed in (i), and such
that $\Cent_G(N_1N_2)$ is abelian.
\end{enumerate}
\end{thm}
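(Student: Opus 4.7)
The plan is to bootstrap from Theorem \ref{t: main} via the observation that for any subgroup $H\le G$ the commuting graph $\Gammaone(H)$ coincides---modulo finitely many centrally-adjacent vertices---with the induced subgraph of $\Gammaone(G)$ on the non-central elements of $H$. Since perfectness is inherited by induced subgraphs, each component of $G$ inherits a perfect commuting graph and so must lie in the list of Theorem \ref{t: main}; this is the starting point for everything else.

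The next step is to bound the number of components. Suppose $G$ has components $N_1,\ldots,N_k$; distinct components centralise one another, and each $N_i$ is non-abelian, so one can choose non-commuting pairs $a_i,b_i\in N_i$ and form products $a_ib_j$, $b_ia_j$ and the like with prescribed commuting behaviour. For $k\ge 3$ the aim is to exhibit five vertices in $\Gammaone(G)$ whose adjacencies are exactly those of a five-cycle: the non-commuting pairs internal to each component supply the ``missing'' non-edges, while the cross-component centralisation supplies the present edges. By the Strong Perfect Graph Theorem this forbidden induced subgraph forces $k\le 2$.

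For the remaining structural work, the central dichotomy should be that a group $N$ appearing in Theorem \ref{t: main} belongs to list (i) of the present theorem precisely when $\Gammaone(N)$ is a disjoint union of cliques---equivalently, when commuting is an equivalence relation on $N\setminus Z(N)$. Under this characterisation, if $N$ lies in list (i) and $A$ is an abelian subgroup centralising $N$, or if $N_1$ and $N_2$ are two components both in list (i), then the commuting graph of the relevant central product inherits the disjoint-cliques structure and is trivially perfect; this gives the positive part of (i) and (iii). Conversely, if $N$ appears in Theorem \ref{t: main} but not in list (i), then $\Gammaone(N)$ contains a length-two induced path---non-commuting elements $a,b\in N$ with a common commuting neighbour $c$---and, given any non-commuting pair $x,y\in\Cent_G(N)$, one can combine $a,c,b,x,y$ (if necessary replacing some by products) to produce an induced five-cycle in $\Gammaone(G)$, contradicting perfectness. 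This argument delivers (ii) and the abelian-centraliser clause of (iii).

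The main obstacle will be constructing the forbidden induced $5$-cycles and verifying that they are genuinely induced. Checking that a candidate five-set has no chord is delicate, since the commuting structure of products of commuting subgroups readily generates coincidental edges that must be avoided by a careful choice of representatives. Moreover, for each quasisimple group in Theorem \ref{t: main} but not in list (i)---namely $\PSL_3(2)$, the smaller covers of $\PSL_3(4)$, $\Alt_6$ and $3\cprod\Alt_6$, and $6\cprod\Alt_7$---one must exhibit the required length-two induced path in $\Gammaone(N)$ explicitly; for the sporadic-type covers this is likely a short case analysis or a short computer-assisted verification.
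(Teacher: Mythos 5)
Your overall architecture matches the paper's: components inherit perfect commuting graphs and so lie in the list of Theorem~\ref{t: main}; three pairwise-centralizing non-abelian subgroups yield an induced $5$-cycle, bounding the number of components by two (this is exactly Proposition~\ref{p: at most 2 components}); and the case division is then governed by a graph-theoretic property of the component. But the property you propose is the wrong one, and this is a genuine gap. You characterize list (i) as the groups whose commuting graph is a disjoint union of cliques (commuting transitive on non-central elements). That is false: $\Sz(2^{2a+1})$, $2\cprod\Sz(8)$, $(2\times2)\cprod\Sz(8)$, $(4\times4)\cprod\PSL_3(4)$ and $(12\times4)\cprod\PSL_3(4)$ all appear in list (i), yet each has non-central elements with non-abelian centralizers (the involutions), so each commuting graph contains an induced path on three vertices and is not a union of cliques. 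The correct dividing invariant, used in the paper (Lemma~\ref{l: 4chain}), is the presence of an induced path on \emph{four} vertices: list (i) consists precisely of the Theorem~\ref{t: main} groups with no such ``$4$-chain''.

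The distinction is not cosmetic, because your proposed construction of a forbidden $5$-cycle from a $P_3$ in $N$ together with one non-commuting pair $x,y\in\Cent_G(N)$ cannot succeed. Write each candidate vertex as $k\ell$ with $k\in N$, $\ell\in\Cent_G(N)$; a non-edge must be witnessed by non-commutation in one of the two coordinates. If the only non-commuting pair among the chosen $k$'s is $\{a,b\}$ (all a $P_3$ supplies) and the only one among the chosen $\ell$'s is $\{x,y\}$, then the non-edges witnessed in each coordinate form a complete bipartite graph, and a complete bipartite subgraph of the complement of a $5$-cycle (itself a $5$-cycle) has at most $2$ edges; two such graphs cover at most $4$ of the $5$ required non-edges. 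By contrast, a $4$-chain $k_1\!-\!k_2\!-\!k_3\!-\!k_4$ supplies three non-commuting pairs, and the paper's vertices $k_1,\ k_2\ell,\ k_3\ell,\ k_4,\ \ell'$ (Proposition~\ref{p: 4chain use}) do form an induced $5$-cycle. So your argument for case (ii), and for the claim in case (iii) that both components lie in list (i), does not go through as stated; and had it gone through it would wrongly exclude, say, a component $\Sz(8)$ from case (iii). (The converse assertions you include --- that configurations (i) and (iii) force perfectness --- are not part of the theorem and need not be proved; the abelian-ness of $\Cent_G(N_1N_2)$ in (iii) is best obtained directly from the three-subgroup proposition applied to $N_1$, $N_2$ and $\Cent_G(N_1N_2)$, which you do have available.)
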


We say that a group $G$ is an {\it AC-group} if the centralizer of every non-central
element of $G$ is abelian. An AC-group necessarily has a perfect commuting graph.
Information about these groups emerges naturally in the course of the proof of Theorem~\ref{t: main}, which
we summarize in the following result.

\begin{cor}\label{c: ac}
\begin{enumerate}
\item The finite quasisimple AC-groups are $6\cprod \Alt_6$ and $\SL_2(q)$ for $q\geq 4$.
\item If a finite AC-group $G$ has a component $N$, then $N$ is the unique component of $G$,
and the subgroup $NZ(G)$ has index at most $2$ in $G$.
\end{enumerate}
\end{cor}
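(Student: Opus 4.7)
The plan is to deduce both parts from Theorems~\ref{t: main} and~\ref{t: general}, exploiting the fact that any AC-group has a perfect commuting graph: since every $\Cent_G(y)$ is abelian for non-central $y$, commuting is an equivalence relation on $G\setminus Z(G)$, so $\Gammaone(G)$ is a disjoint union of cliques. For part (i), a quasisimple AC-group thus lies in the list of Theorem~\ref{t: main}, and we inspect each entry. The groups $\SL_2(q)$ with $q\ge 4$ are AC since every proper centralizer is either a maximal torus (cyclic) or the abelian unipotent subgroup; the triple cover $6\cprod\Alt_6 = 3\cprod\SL_2(9)$ is AC by direct verification. In each other case of Theorem~\ref{t: main}, one identifies a non-central element with non-abelian centralizer: for $\PSL_3(2)$, $\Alt_6$, $\Alt_7$, $\PSL_3(4)$ and their listed covers, a central involution of a Sylow $2$-subgroup suffices; for $\Sz(q)$, any involution has centralizer equal to the entire Sylow $2$-subgroup, of order $q^2$ and nilpotency class $2$.

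For part (ii), let $G$ be AC with a component. If $G$ had two distinct components $N_1, N_2$, they would centralize each other, and any $n_1\in N_1\setminus Z(N_1)$ (necessarily non-central in $G$) would satisfy $N_2\le\Cent_G(n_1)$, contradicting AC since $N_2$ is quasisimple and hence non-abelian. So $G$ has a unique component $N$, and since the AC property restricts to $N$, part (i) forces $N\cong\SL_2(q)$ for some $q\ge 4$ or $N\cong 6\cprod\Alt_6$. Moreover $\Cent_G(N)=Z(G)$: any $c\in\Cent_G(N)$ has $N\le\Cent_G(c)$, so $\Cent_G(c)$ is non-abelian and AC forces $c\in Z(G)$; the reverse inclusion is trivial. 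Hence $NZ(G)=N\Cent_G(N)$, and the conjugation action of $G$ on $N$ induces an embedding $G/NZ(G)\hookrightarrow\Out(N)$.

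The main obstacle is to show this image has order at most $2$. Any $g\in G\setminus NZ(G)$ induces a non-trivial outer automorphism $\phi$ of $N$, and $\Cent_N(\phi)\le\Cent_G(g)$ must be abelian; one therefore classifies outer automorphisms of $N$ with abelian fixed subgroup. For $N=\SL_2(q)$ with $q=p^a$, $\Out(N)$ is generated by a diagonal automorphism of order $\gcd(2,q-1)$ and a field automorphism of order $a$; any outer automorphism involving a non-trivial field component has fixed subgroup a (possibly twisted) proper subfield form, isomorphic to some $\SL_2(q_0)$ with $q_0\ge 2$, which is always non-abelian. Only the pure diagonal automorphism has abelian fixed subgroup (a split torus), so $|G:NZ(G)|\le\gcd(2,q-1)\le 2$. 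The analogous analysis for $N=6\cprod\Alt_6$, with $\Out(N)\cong(\zz/2)^2$, shows that only one of the three non-trivial outer involutions has abelian fixed subgroup, yielding $|G:NZ(G)|\le 2$ in all cases.
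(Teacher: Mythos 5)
Your overall strategy coincides with the paper's: part (i) reduces to inspecting the list of Theorem~\ref{t: main} and exhibiting, for each non-AC entry, a non-central element with non-abelian centralizer; part (ii) shows the component is unique, that $\Cent_G(N)=Z(G)$, and then bounds the image of $G/NZ(G)$ in $\Out(N)$. Two of your concrete steps fail, however. The smaller one is your witness for $6\cprod\Alt_7$: the Sylow $2$-subgroups of $6\cprod\Alt_7$ are generalized quaternion of order $16$ (every involution of $\Alt_7$ lifts to an element of order $4$ in the double cover), so the unique involution of a Sylow $2$-subgroup is the central involution of the group. A ``central involution of a Sylow $2$-subgroup'' is therefore central in $G$ and witnesses nothing; you need an element of order $4$, as the paper uses.

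The more serious problem is in part (ii), in your classification of outer automorphisms of $N=\SL_2(q)$ with abelian fixed subgroup. It is not true that every class involving a non-trivial field component has fixed subgroup a subfield form: when $q$ is an odd square, the coset of $\mathrm{Inn}$ corresponding to the product $\delta\phi$ of the diagonal automorphism with the field automorphism of order $2$ consists entirely of automorphisms with \emph{abelian} fixed subgroups on $N$. (For $q=9$ this is the $\Mat_{10}$-type extension of $\PSL_2(9)\cong\Alt_6$: every outer element of $\Mat_{10}$ has cyclic centralizer in $\Alt_6$, and correspondingly in $\SL_2(9)$.) Hence your conclusion that the image $H$ of $G/NZ(G)$ in $\Out(N)$ lies in $\langle\delta\rangle$, and therefore has order at most $\gcd(2,q-1)$, does not follow and is false in general. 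The bound $|H|\le 2$ survives, but it must be obtained as the paper does: rule out only the \emph{pure} field classes (a genuine field automorphism has fixed subgroup a copy of $\SL_2(q_0)$ or an index-$2$ overgroup of its image, which is non-abelian even for $q_0=2$), and then note that any subgroup of $\Out(\PSL_2(q))\cong C_{\gcd(2,q-1)}\times C_a$ meeting the field-automorphism factor trivially has order at most $2$. Your treatment of $N=6\cprod\Alt_6$ reaches the right bound, though the paper argues differently there, using the non-trivial action of $\Out(\Alt_6)$ on $Z(N)$ to exclude $G/Z(G)\cong\Aut(\Alt_6)$.
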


An equivalent formulation of Corollary \ref{c: ac}(i) is that a finite quasisimple group $G$ is an
AC-group if and only if $G$ has a central subgroup $N$ such that $G/N\cong \SL_2(q)$ for some $q\geq 4$.
The case $G\cong 6\cprod \Alt_6$ conforms to this statement, since this group is isomorphic to $3\cprod \SL_2(9)$.

\subsection{Relation to the literature}\label{s: literature}

There has been a great deal of recent interest in  commuting graphs. One question which has attracted attention is the connectedness of $\Gammaone(G)$, where $G$ is a finite group. Recall that the \emph{prime graph} of a group $G$ is the graph whose vertices are the primes dividing $G$, with an edge between distinct vertices $p$ and $q$ if $G$ has a element of order $pq$. It has been shown by Morgan and Parker \cite{MP} that if $G$ is centreless, then $\Gammaone(G)$ is connected if and only if the prime graph of $G$ is connected. 

It is clear that if $G$ is quasisimple, and if $\Gammaone(G/Z(G))$ is not connected, then neither is $\Gammaone(G)$. The simple groups with connected prime graphs are known, from work of Williams \cite{Williams}, Kondrat'ev \cite{Kondratev}, and
Iiyori and Yamaki \cite{IY}. From their results, it can be seen that all of the groups listed in Theorem~\ref{t: main} have disconnected commuting graphs.

Another aspect of the commuting graph which has generated a lot of interest is the diameter of its connected components.
The construction by Hegarty and Zhelezov
\cite{HZ} of a $2$-group whose commuting graph has diameter $10$, has recently led Giudici and Parker \cite{GP} to a construction
of a family of $2$-groups with commuting graphs of unbounded diameter. This has answered (negatively) an influential
conjecture of
Iranmanesh and Jafarzadeh \cite{IJ}, that there was a universal upper bound for the diameter of a connected
commuting graph. Morgan and Parker \cite{MP} have shown, using the classification of finite simple groups,
that if $Z(G)$ is trivial then the diameter of any connected component of $\Gammaone(G)$ is at most~$10$.

Our interest in proving Theorems~\ref{t: main} and \ref{t: general} stems in part from earlier work with Azad \cite{abg} in
which we study a generalization of the commuting graph of a group $G$, namely the {\it $c$-nilpotency graph},
$\Gamma_c(G)$. This is the graph whose vertices are elements of $G$ with two vertices $g,h$ being connected if and
only if the group $\langle g, h\rangle$ is nilpotent of rank at most $c$. Ignoring central elements of $G$, the
commuting graph of $G$ is the same as the $1$-nilpotency graph.

In \cite{abg} we calculate the clique-cover number and independence number for the graphs $\Gamma_c(G)$ for various
simple groups $G$ and observe, in particular, that for these groups the two numbers coincide
(allowing the possibility that the graphs are perfect). This observation is part of the motivation for the current
paper and, moreover, suggests an obvious direction for further research: the question of which quasisimple groups
$G$ have perfect $c$-nilpotency graph, for $c>1$.

The question of which groups have perfect commuting graph has recently been asked in a blog entry
by Cameron \cite{cameronblog}. He gives an example of a finite $2$-group whose commuting graph is non-perfect.
A further motivation for this paper has been to provide at least a partial answer to his question.

The definition of an AC-group is a generalization of the better known notion of a {\it CA-group}, a group
for which the
centralizer of any non-identity element is abelian. Groups with this property arose naturally in early results
towards the classification of finite simple groups, and so they have significant historical importance.
It has been shown that every finite CA-group is a Frobenius group, an abelian group,
or $\SL_2(2^a)$ for some $a$ \cite{weisner}, \cite{bsw}, \cite{suzuki3}.

To a limited degree, Corollary~\ref{c: ac} extends this classical work on CA-groups. Of course our work,
unlike the work we have cited on CA-groups, depends on the classification of finite simple groups.
A proof of Corollary~\ref{c: ac} independent of the classification would be of considerable interest and significance.
There has been recent interest in AC-groups, and Corollary~\ref{c: ac} is related
to results from \cite{aam} and \cite{afo} in particular.

\subsection{Structure and methods}

The paper is structured as follows. In \S\ref{s: background} we state basic definitions and background results as well
as proving some straightforward general lemmas. In \S\ref{s: proof} we work through the different families of quasisimple
groups given by the classification, and we establish which finite quasisimple groups have perfect commuting graph, thereby
establishing Theorem~\ref{t: main}. In \S\ref{s: general} we prove Theorem~\ref{t: general} and Corollary~\ref{c: ac}. Finally, in \S\ref{s: extensions} we brefly discuss how Theorems~\ref{t: main} and \ref{t: general} may be extended, and we present some preliminary results in this direction.

For the most part the presentation of our arguments does not depend on computer calculation. We acknowledge,
however, that the computational
algebra packages GAP \cite{GAP} and Magma \cite{Magma} have been indispensable to us in arriving at our results, and
also that we have allowed ourselves to state many facts about the structure of particular groups, without proof or
reference, when such statements are easily verified computationally.

The methods we have chosen for groups of Lie type are by no means the only approach to the problem. We are grateful to an anonymous referee for 
pointing out an alternative way of ruling out groups of large Lie rank. It is clear that if $G$ is a group of Lie rank at least $5$, then its Dynkin diagram contains an independent set of vertices of size $3$. This implies the existence of three subgroups of $G$ isomorphic to $\SL_2(q)$, which centralize one another. But Proposition~\ref{p: at most 2 components} now implies that $\Gammaone(G)$ is not 
perfect.  

By `group' we shall always mean `finite group'.
By `simple group' we shall always mean `non-abelian finite simple group'.

\subsection{Acknowledgments}

The second author was a visitor at the University of Bristol while this work was undertaken and would like to thank the members of the Department of Mathematics for their hospitality. Both authors would like to thank Professor J\"urgen M\"uller for help with the GAP calculations discussed in \S\ref{s: extensions}.

\section{Background}\label{s: background}

In this section we gather together relevant background material, as well as proving some basic lemmas.

\subsection{Graphs}

In this paper all graphs are finite, simple and undirected. Let $\Gamma=(V,E)$ be such a graph (with vertex set $V$
and edge set $E$).
\begin{enumerate}
 \item The {\it chromatic number of} $\Gamma$, $\chi(\Gamma),$ is the smallest number of colours required to colour
 every vertex of $\Gamma$ so that neighbouring vertices have different colours.
 \item The {\it order} of $\Gamma$ is $|V|$.
 \item The {\it clique number of} $\Gamma$,  $\Cl(\Gamma)$, is the order of the largest complete subgraph of $\Gamma$.
 \item An {\it induced subgraph of} $\Gamma$ is a graph $\Lambda=(V', E')$ such that $V'\subseteq V$ and there is an
 edge between two vertices $v$ and $v'$ in $\Lambda$ if and only if there is an edge between $v$ and $v'$ in $\Gamma$.
 \item We say that $\Gamma$ is {\it perfect} if $\chi(\Lambda)=\Cl(\Lambda)$ for every induced subgraph of $\Lambda$.
\end{enumerate}
We have already noted in the introduction that every graph $\Gamma$ satisfies $
\chi(\Gamma) \geq \Cl(\Gamma)$.

To state the two most important theorems concerning perfect graphs, we require some terminology. The
{\it complement} of $\Gamma$ is the graph $\Gamma^c$ with vertex set $V(\Gamma)$, in which an edge connects
two vertices if
and only if they are not connected by an edge in $\Gamma$. A \emph{cycle} is a finite connected graph
$\Gamma$ such that every vertex has valency $2$. A \emph{$k$-cycle} is a cycle of order $k$.

\begin{thm*}[Weak Perfect Graph Theorem \cite{lov}]
 A graph $\Gamma$ is perfect if and only if the complement of $\Gamma$ is perfect.
\end{thm*}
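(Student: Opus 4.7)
The plan is to derive the Weak Perfect Graph Theorem from the following self-complementary characterization, due to Lovász: a graph $\Gamma$ is perfect if and only if $|V(H)|\le\alpha(H)\omega(H)$ for every induced subgraph $H$ of $\Gamma$, where $\alpha(H)$ denotes the independence number and $\omega(H)=\Cl(H)$ the clique number. Because $\alpha(H)=\omega(H^c)$ and $\omega(H)=\alpha(H^c)$, and because the induced subgraphs of $\Gamma^c$ are precisely the complements of the induced subgraphs of $\Gamma$, this condition is manifestly invariant under passing to the complement. Once the characterization is established, the theorem follows at once.

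The easy direction of the characterization is immediate: if $\Gamma$ is perfect then any induced subgraph $H$ admits a proper colouring with $\omega(H)$ colour classes, each an independent set of size at most $\alpha(H)$, and so $|V(H)|\le \omega(H)\alpha(H)$. The substantial content lies in the converse. The central tool is the \emph{Replication Lemma}: if $\Gamma$ is perfect and $v\in V(\Gamma)$, then the graph $\Gamma'$ obtained by adding a new vertex $v'$ with the same closed neighbourhood as $v$ (that is, $v'$ is joined to $v$ and to every neighbour of $v$) is again perfect. I would prove this by induction on $|V(\Gamma)|$, splitting according to whether $v$ lies in a maximum clique of $\Gamma$: in the first case one has $\omega(\Gamma')=\omega(\Gamma)+1$ and a suitable $\omega(\Gamma')$-colouring of $\Gamma'$ is obtained by refining an optimal colouring of $\Gamma$ on the colour class containing $v$; in the second case $\omega(\Gamma')=\omega(\Gamma)$ and one patches together optimal colourings of $\Gamma-v$ and of an induced subgraph obtained from the inductive hypothesis. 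The argument must then be extended from $\Gamma$ to its arbitrary induced subgraphs, but this is automatic because induced subgraphs of $\Gamma$ are again perfect.

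With the Replication Lemma in hand, the converse direction proceeds by contradiction. Assume for a contradiction a minimum imperfect graph $\Gamma$ satisfying the inequality for every induced subgraph, including itself. Every proper induced subgraph of $\Gamma$ is then perfect. One then considers the family of all maximum independent sets of $\Gamma$, together with the optimal colourings of their vertex-deleted subgraphs, and carries out a double counting of ordered incidences between maximum cliques and maximum independent sets of $\Gamma$. Using the Replication Lemma to build, from $\Gamma$, an auxiliary perfect graph whose parameters are directly computable, one extracts an inequality that contradicts $|V(\Gamma)|\le\alpha(\Gamma)\omega(\Gamma)$, forcing $\Gamma$ to have been perfect to begin with.

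The main obstacle is the Replication Lemma itself and the clever double-counting that converts it into the structural inequality above; both steps are delicate and, to my knowledge, there is no significantly shorter route to the theorem. Everything after the characterization is then formal: given any induced subgraph $\Delta$ of $\Gamma^c$, we have $\Delta=H^c$ for some induced subgraph $H$ of $\Gamma$, and $|V(\Delta)|=|V(H)|\le\alpha(H)\omega(H)=\omega(\Delta)\alpha(\Delta)$, so perfection of $\Gamma$ implies perfection of $\Gamma^c$.
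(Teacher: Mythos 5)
You should first note that the paper does not prove this statement at all: it is quoted verbatim as a classical theorem with a citation to Lov\'asz, and is used purely as a black box (indeed the arguments in \S 3 of the paper lean on the Strong Perfect Graph Theorem rather than on this one). So there is no in-paper proof to compare against; what your proposal does is reconstruct Lov\'asz's own argument, and it identifies the standard route correctly. The reduction of the theorem to the self-complementary characterization $|V(H)|\le\alpha(H)\omega(H)$ for all induced subgraphs $H$ is exactly right, as is the observation that induced subgraphs of $\Gamma^c$ are the complements of induced subgraphs of $\Gamma$ while $\alpha$ and $\omega$ swap under complementation; the easy direction is correct as written; and the hard direction does rest on the Replication Lemma followed by a counting argument with one maximum independent set $A_0=\{a_1,\dots,a_\alpha\}$ and the $\omega$ colour classes of an optimal colouring of each $\Gamma-a_i$, showing that every maximum clique misses exactly one of these $\alpha\omega+1$ independent sets. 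One point where your sketch is imprecise rather than wrong: in case (b) of the Replication Lemma (where $v$ lies in no maximum clique) the correct patch is not a colouring of $\Gamma-v$; rather one takes an optimal colouring of $\Gamma$ with $v$ in the class $S$, notes that every maximum clique meets $S\setminus\{v\}$ so that $\Gamma-(S\setminus\{v\})$ has clique number $\omega-1$ and hence (by perfection of proper induced subgraphs) an $(\omega-1)$-colouring, and then assigns the single remaining colour to the independent set $(S\setminus\{v\})\cup\{v'\}$. With that correction the outline is a faithful summary of the known proof, which is the appropriate thing to supply for a theorem the paper only cites.
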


\begin{thm*}[Strong Perfect Graph Theorem \cite{spgt}]
A graph $\Gamma$ is perfect if and only if it has no induced subgraph isomorphic either to a cycle of odd order
at least $5$, or to the complement of such a cycle.
\end{thm*}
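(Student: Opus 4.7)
The statement is Berge's 1961 Strong Perfect Graph Conjecture, proved by Chudnovsky, Robertson, Seymour and Thomas in 2006 in a paper of some $150$ pages, so any brief ``sketch'' here is necessarily a caricature of the real argument; I describe only the overall strategy.

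The two directions I would treat separately. The forward direction is elementary: an odd cycle $C_{2k+1}$ with $k\ge 2$ is triangle-free but not $2$-colourable, so has $\Cl=2$ and $\chi=3$, and in particular is not perfect. Applying the Weak Perfect Graph Theorem stated immediately above, its complement $C_{2k+1}^c$ is also not perfect. Since any induced subgraph of a perfect graph is perfect, no perfect graph can contain an odd hole or an odd antihole as an induced subgraph.

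For the converse, call a graph $\Gamma$ a \emph{Berge graph} if it contains no induced odd hole and no induced odd antihole; the content of the theorem is then that every Berge graph is perfect. The strategy of Chudnovsky--Robertson--Seymour--Thomas is to establish a structural decomposition theorem: every Berge graph either belongs to one of a short list of ``basic'' classes (bipartite graphs, line graphs of bipartite graphs, the complements of these two, and so-called double split graphs), or admits one of a few structural decompositions (a $2$-join, its complement, or a balanced skew partition). One then checks that each basic class consists of perfect graphs, and separately that no minimum imperfect graph can admit any of the listed decompositions; combining these, a minimum imperfect Berge graph is forced to be empty, a contradiction.

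The main obstacle is, of course, the structural decomposition theorem itself, whose proof occupies the bulk of that paper and proceeds by an extraordinarily intricate case analysis controlled by local configurations such as stretchers, wheels and appendices, together with tools like the Roussel--Rubio lemma controlling attachments of anticonnected sets. I would not attempt to reproduce this argument in a self-contained way; rather I would cite the decomposition theorem wholesale and supply only the comparatively routine verifications that the basic classes are perfect and that perfection is inherited from the pieces of each decomposition.
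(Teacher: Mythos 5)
Your outline is accurate: the paper does not prove this statement but simply cites the Chudnovsky--Robertson--Seymour--Thomas paper \cite{spgt}, and your sketch (elementary forward direction via the imperfection of odd holes and antiholes, plus the decomposition theorem for Berge graphs into basic classes or graphs admitting a 2-join, complement 2-join, or balanced skew partition) is a faithful summary of the argument in that reference. Deferring to the citation for the decomposition theorem is exactly the right level of detail here, and no gap arises.
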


We shall say that a subgraph $\Delta$ of $\Gamma$ is \emph{forbidden} if $\Delta$ is an induced subgraph of $\Gamma$, and
either $\Delta$ or $\Delta^c$ is isomorphic to a cycle of odd order at least $5$. Figure~\ref{f: three
smallest} shows the three smallest forbidden subgraphs.
(We note that the complement of a $5$-cycle is another $5$-cycle.)

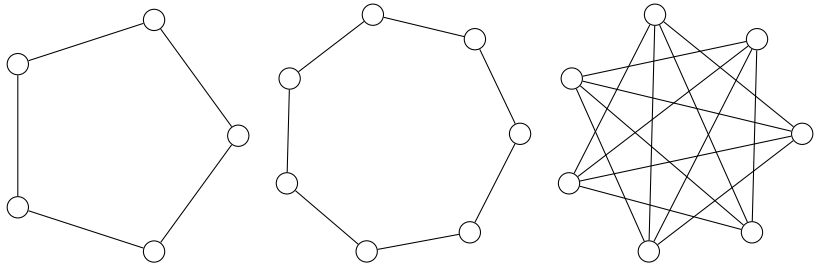
\begin{figure}[ht]
\centering
\begin{tabular}{ccc}

\newcount\mycount

\begin{tikzpicture}[scale=0.3]
  \foreach \number in {1,...,5}{
        \mycount=\number
        \advance\mycount by -1
  \multiply\mycount by 72
        \advance\mycount by 0
      \node[draw,circle,inner sep=0.10cm] (N-\number) at (\the\mycount:5.4cm) {};
    }
  \path (N-1) edge (N-2);
  \path (N-2) edge (N-3);
  \path (N-3) edge (N-4);
  \path (N-4) edge (N-5);
  \path (N-5) edge (N-1);

    \end{tikzpicture}

    &
  \newcount\mycount

\begin{tikzpicture}[scale=0.3]

  \foreach \number in {1,...,7}{
        \mycount=\number
        \advance\mycount by -1
  \multiply\mycount by 51
        \advance\mycount by 0
      \node[draw,circle,inner sep=0.10cm] (N-\number) at (\the\mycount:5.4cm) {};
    }
  \path (N-1) edge (N-2);
  \path (N-2) edge (N-3);
  \path (N-3) edge (N-4);
  \path (N-4) edge (N-5);
  \path (N-5) edge (N-6);
  \path (N-6) edge (N-7);
  \path (N-7) edge (N-1);
  \end{tikzpicture}
&

  \begin{tikzpicture}[scale=0.3]

  \foreach \number in {1,...,7}{
        \mycount=\number
        \advance\mycount by -1
  \multiply\mycount by 51
        \advance\mycount by 0
      \node[draw,circle,inner sep=0.10cm] (N-\number) at (\the\mycount:5.4cm) {};
    }
  \path (N-1) edge (N-3);
  \path (N-2) edge (N-4);
  \path (N-3) edge (N-5);
  \path (N-4) edge (N-6);
  \path (N-5) edge (N-7);
  \path (N-6) edge (N-1);
  \path (N-7) edge (N-2);
  \path (N-1) edge (N-4);
  \path (N-2) edge (N-5);
  \path (N-3) edge (N-6);
  \path (N-4) edge (N-7);
  \path (N-5) edge (N-1);
  \path (N-6) edge (N-2);
  \path (N-7) edge (N-3);

  \end{tikzpicture}
\end{tabular}
 \caption{The three forbidden subgraphs of smallest order.}
 \label{f: three smallest}
\end{figure}

The term \emph{Berge graph} has also been used to mean a graph with no forbidden subgraphs (and an alternative
statement of the Strong Perfect Graph Theorem is that the class of Berge graphs and the class
of perfect graphs are the same). In fact the arguments presented in \S\ref{s: proof} directly
characterize those quasisimple groups $G$ for which $\Gamma(G)$ is a Berge graph. It is the Strong Perfect Graph
Theorem which allows us to express these results in terms of perfect graphs.

\subsection{Commuting graphs}

Let $G$ be a finite group. We defined the commuting graph $\Gammaone(G)$ in the introduction. It is worth
justifying here the assertion that the presence or absence as vertices of the central elements of $G$ has no affect on
whether $\Gammaone(G)$ is perfect. Let $\Gammaone'(G)$ be the graph with vertex set $G$ and an edge $\{g,h\}$ if and
only if $gh=hg$. We note that $\Gammaone(G)$ is the induced subgraph of $\Gammaone'(G)$ on the vertices $G\setminus Z(G)$.

\begin{lem}\label{l: no diff}
 $\Gammaone(G)$ is perfect if and only if $\Gammaone'(G)$ is perfect.
\end{lem}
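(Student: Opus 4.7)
My plan is to observe that $\Gammaone'(G)$ is obtained from $\Gammaone(G)$ simply by adjoining the central elements of $G$ as new vertices, and that each central vertex is, by the definition of the centre, adjacent in $\Gammaone'(G)$ to every other vertex. So the lemma reduces to a general graph-theoretic fact: adding universal vertices (vertices connected to everything) to a graph preserves perfection and non-perfection in both directions.

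The easy direction is that if $\Gammaone'(G)$ is perfect, then $\Gammaone(G)$, being an induced subgraph, is perfect too. For the other direction, suppose $\Gammaone(G)$ is perfect and let $\Delta'$ be any induced subgraph of $\Gammaone'(G)$. Write $\Delta'=\Delta\cup C$, where $C$ consists of the central vertices in $\Delta'$ and $\Delta$ of the non-central ones. Since central elements commute with each other and with every group element, $C$ is a clique in $\Delta'$ and every vertex of $C$ is joined to every vertex of $\Delta$. Hence any proper colouring of $\Delta$ extends to a proper colouring of $\Delta'$ by adding $|C|$ new colours for the vertices of $C$, giving $\chi(\Delta')\leq \chi(\Delta)+|C|$; conversely, in any colouring of $\Delta'$ the vertices of $C$ must receive colours distinct from one another and from all colours used in $\Delta$, so $\chi(\Delta')\geq \chi(\Delta)+|C|$. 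Similarly any clique of $\Delta'$ is obtained by adjoining all of $C$ to a clique of $\Delta$, so $\Cl(\Delta')=\Cl(\Delta)+|C|$. Since $\Delta$ is an induced subgraph of $\Gammaone(G)$ we have $\chi(\Delta)=\Cl(\Delta)$, and therefore $\chi(\Delta')=\Cl(\Delta')$.

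There is no real obstacle here; the only thing to be careful about is handling the vertex set correctly (some authors may take $\Gammaone(G)$ to have vertex set $G$ rather than $G\setminus Z(G)$), and confirming that the adjacency relation on $C$ itself is complete. Both facts follow immediately from the definition of $Z(G)$. I would note that the argument does not require the Strong Perfect Graph Theorem, although an equivalent proof could be given via the SPGT by observing that a forbidden subgraph of $\Gammaone'(G)$ cannot contain a universal vertex, since in both a $(2k+1)$-cycle and its complement (with $k\geq 2$) every vertex has degree at most $|V|-3$, so any subgraph containing a universal vertex fails to be forbidden.
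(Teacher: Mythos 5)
Your proof is correct and follows essentially the same route as the paper: both decompose an induced subgraph of $\Gammaone'(G)$ as $\Delta\cup C$ with $C$ a set of central (hence universal) vertices and use the additivity $\chi(\Delta')=\chi(\Delta)+|C|$, $\Cl(\Delta')=\Cl(\Delta)+|C|$. Your version just spells out the inequalities (and the optional SPGT remark) in more detail than the paper does.
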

\begin{proof}
Suppose that $\Gammaone'(G)$ is perfect and $\Lambda$ is an induced subgraph of $\Gammaone(G)$. Then $\Lambda$ is an
induced subgraph of $\Gammaone'(G)$ and so $\chi(\Lambda)=\Cl(\Lambda)$ and $\Gammaone(G)$.

Conversely suppose that $\Gammaone(G)$ is perfect and $\Lambda'$ is an induced subgraph of $\Gamma'(G)$. Then
$V(\Lambda') = V(\Lambda)\cup V_Z$ where $\Lambda$ is an induced subgraph of $\Gamma(G)$ and $V_Z$ is a set of central
elements. Now
\[
\chi(\Lambda')=\chi(\Lambda)+|V_Z| = \Cl(\Lambda)+|V_Z| = \Cl(\Lambda')
\]
and we conclude that $\Gammaone'(G)$ is perfect.
\end{proof}


It will be convenient to extend our notation in the following way: if $\Omega\subseteq G$, then we write
$\Gammaone(\Omega)$ for the induced subgraph of $\Gammaone(G)$ whose vertices are elements of $\Omega$.

\subsection{The classification of finite simple groups}
Our results are all dependent on the classification of finite simple groups. The principal sources for information
on these groups and their covering groups is \cite{atlas} and \cite{kl}, which we have used very extensively, without necessarily
mentioning it explicitly in every instance. We have used the notation of \cite{atlas} for
finite simple and quasisimple groups, except in a few cases where we believe another usage is less likely to
cause confusion.

\begin{table}[!ht]
\centering
\begin{tabular}{lll}
Alternating groups      & $\Alt_n$ & $n\ge 5$\\ \\
Classical groups \\
\ \ \emph{Linear}       & $\PSL_n(q)$ & $n\ge 2$; not $\PSL_2(2)$ or $\PSL_2(3)$,\\
\ \ \emph{Unitary}      & $\PSU_n(q)$ & $n\ge 3$; not $\PSU_3(2)$,\\
\ \ \emph{Symplectic}   & $\PSp_{2m}(q)$ & $m\ge 2$; not $\PSp_4(2)$,\\
\ \ \emph{Orthogonal}   & $\POmega_{2m+1}(q)$ & $m\ge 3$; $q$ odd,\\
                        & $\POmega^+_{2m}(q)$ & $m\ge 4$,\\
                        & $\POmega^-_{2m}(q)$ & $m\ge 4$,\\ \\
Exceptional groups \\
\ \ \emph{Chevalley}    & $G_2(q),\ F_4(q),\ E_6(q)$; & not $G_2(2)$,\\
                        & $E_7(q),\ E_8(q)$, & \\
\ \ \emph{Steinberg}    & ${}^3D_4(q),\ {}^2E_6(q)$,\\
\ \ \emph{Suzuki}       & $\Sz(2^{2a+1})$ & $a\ge 1$,\\
\ \ \emph{Ree}          & ${}^2F_4(2^{2a+1})'$ & $a \ge 0$,\\
                        & ${}^2G_2(3^{2a+1})$ & $a\ge 1$\\ \\
Sporadic groups \\
\multicolumn{3}{l}{$\quad \Mat_{11},\ \Mat_{12},\ \Mat_{22}, \Mat_{23},\ \Mat_{24},\
                        \mathrm{J}_1,\  \mathrm{J}_2,\ \mathrm{J}_3,\ \mathrm{J}_4,\
                        \mathrm{Co}_3,\ \mathrm{Co}_2,\ \mathrm{Co}_1,\ \mathrm{Fi}_{22},$}\\
\multicolumn{3}{l}{$\quad \mathrm{Fi}_{23},\ \mathrm{Fi}_{24}',\ \mathrm{HS},\ \mathrm{McL},\ \mathrm{He},\
                        \mathrm{Ru},\ \mathrm{Suz},\ \mathrm{O'N},\ \mathrm{HN},\
                        \mathrm{Ly},\ \mathrm{Th},\ \mathrm{B},\ \mathrm{M}.$}
\end{tabular}
\caption{\label{t: FSG} Finite simple groups}
\end{table}

The non-abelian finite simple groups are listed in Table \ref{t: FSG}. In this table, and throughout the paper,
$q$ is a power of a prime $p$. The parameters in this list have been restricted in order to reduce the number
of occurrences of isomorphic groups under different names. The following isomorphisms remain:
\[
\Alt_5\cong \PSL_2(4)\cong \PSL_2(5),\
\PSL_2(7)\cong \PSL_3(2),\
\Alt_6\cong \PSL_2(9),\
\Alt_8\cong \PSL_4(2),\
\PSU_4(2)\cong \PSp_4(3).
\]


\subsection{Quasisimple groups}

Although the notation used in this paper is standard, we recall here some key definitions. A group is
{\it perfect} if it coincides with its commutator subgroup. (There is no connection between the usages of the word
`perfect' as it applies to graphs and to groups.) A {\it quasisimple group} is a perfect group $G$ such that
$G/Z(G)$ is simple.

All of the finite quasisimple groups are known, as a corollary to the Classification of Finite Simple Groups.
In most cases a quasisimple group $G$ has cyclic centre, and for a positive integer $n$ and
a simple group $S$, we write $n.S$ for a group $G$ such that $Z(G)$ is cyclic of order $n$ and
$G/Z(G) \cong S$. This notation extends in a natural way to groups with non-cyclic centres;  for instance we write
$(2 \times 2).S$ for a group with centre $C_2\times C_2$ and $G/Z(G)\cong S$.

In principle the notation just described does not specify a group up to isomorphism -- for instance,
there are may be several isomorphism classes of groups of type $n.S$ -- however
in all instances of the notation in this paper, the isomorphism class is in fact unique. This notation for
finite quasisimple groups is consistent with, for instance, \cite{atlas} and \cite{kl}.

A {\it component} of a group $G$ is a quasisimple subgroup $N$ which is {\it subnormal}, i.e.\
there exists a finite chain of subgroups of the form $N=N_0< N_1 < N_2 <\cdots < N_k =G$, such that $N_i$ is normal in
$N_{i+1}$ for all $i$. The significance of the set of components of $G$ has been demonstrated by
the seminal work of Bender, in which the notion of the {\it Generalized
Fitting Subgroup} $F^*(G)$ is defined (see, for instance, \cite{aschbacher3}).

\subsection{Basic lemmas}

\begin{lem}\label{l: subgroup obstruction}
If $H\leq G$ and $\Gammaone(H)$ is not perfect, then $\Gammaone(G)$ is not perfect.
\end{lem}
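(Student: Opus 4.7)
The plan is essentially to observe that $\Gammaone(H)$ is already an induced subgraph of $\Gammaone(G)$, and then to quote the fact that an induced subgraph of a perfect graph is perfect. The only subtlety is a small issue with centres: the vertex set of $\Gammaone(H)$ is $H\setminus Z(H)$, but the vertex set of $\Gammaone(G)$ restricted to $H$ is $H\setminus Z(G)$, and these need not be literally identical.

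First I would verify the containment $H\setminus Z(H)\subseteq G\setminus Z(G)$. If $h\in H\setminus Z(H)$ then by definition some $h'\in H$ fails to commute with $h$; since $h'\in G$, this already witnesses that $h\notin Z(G)$. Thus every vertex of $\Gammaone(H)$ is a vertex of $\Gammaone(G)$, and since edges in both graphs record exactly the same commuting relation, $\Gammaone(H)$ is an induced subgraph of $\Gammaone(G)$.

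Now the lemma follows in one line: if $\Gammaone(G)$ were perfect, then every induced subgraph of $\Gammaone(G)$, and hence every induced subgraph of $\Gammaone(H)$ (being an induced subgraph of an induced subgraph), would satisfy $\chi=\Cl$; so $\Gammaone(H)$ would also be perfect, contradicting the hypothesis. Equivalently, one may invoke the Strong Perfect Graph Theorem: a forbidden subgraph of $\Gammaone(H)$ is automatically a forbidden subgraph of $\Gammaone(G)$.

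There is really no obstacle here — the only thing to watch is the centre issue just mentioned. If one were worried about this, one could sidestep it entirely by passing to $\Gammaone'$, applying Lemma~\ref{l: no diff} in both directions: $\Gammaone'(H)$ is literally the induced subgraph of $\Gammaone'(G)$ on the vertex set $H$, with no centralisation issue at all.
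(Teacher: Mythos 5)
Your proof is correct and follows the same route as the paper: observe that $\Gammaone(H)$ is an induced subgraph of $\Gammaone(G)$ and conclude immediately. The only difference is that you explicitly verify the containment $H\setminus Z(H)\subseteq G\setminus Z(G)$, a detail the paper leaves implicit.
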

\begin{proof}
Any induced subgraph of $\Gammaone(H)$ is an induced subgraph of $\Gammaone(G)$. The result follows immediately.
\end{proof}

\begin{lem}\label{l: abelian centralizers}
 Suppose that $g$ is an element in $G$ for which $\Cent_G(g)$ is abelian. Then $g$ does not lie on a forbidden subgraph
 of $\Gammaone(G)$.
\end{lem}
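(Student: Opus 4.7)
The plan is to argue by contradiction. Suppose that $g$ lies on a forbidden induced subgraph $\Delta$ of $\Gammaone(G)$, so that $\Delta$ or $\Delta^c$ is an odd cycle of length at least $5$. The key observation is that if $h_1, h_2$ are two neighbours of $g$ in $\Delta$, then both lie in $\Cent_G(g)$, which is abelian by hypothesis, and hence $h_1$ and $h_2$ commute. As non-central elements of $G$ they are both vertices of $\Gammaone(G)$, so $\{h_1, h_2\}$ is an edge of $\Gammaone(G)$; since $\Delta$ is induced, $h_1$ and $h_2$ must also be adjacent in $\Delta$. In other words, the neighbourhood of $g$ in $\Delta$ must be a clique of $\Delta$. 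It therefore suffices to exhibit two neighbours of $g$ in $\Delta$ that are non-adjacent in $\Delta$.

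I would do this case by case, according to whether $\Delta$ is an odd cycle or the complement of one. If $\Delta$ is itself an odd cycle $C_n$ with $n \geq 5$, then $g$ has exactly two neighbours in $\Delta$, namely its two cycle-neighbours, and these are non-adjacent because $C_n$ contains no triangle for $n \geq 4$. If instead $\Delta \cong C_n^c$ for odd $n \geq 5$, label the vertices of the underlying cycle $v_0, v_1, \ldots, v_{n-1}$ with $g = v_0$ and indices taken modulo $n$; then the neighbours of $g$ in $\Delta$ are precisely $v_2, v_3, \ldots, v_{n-2}$, a set of size $n - 3 \geq 2$. The vertices $v_2$ and $v_3$ are adjacent in $C_n$ and therefore non-adjacent in $\Delta = C_n^c$, which provides the required pair.

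There is no real obstacle here; the whole argument is elementary once the two possible shapes of forbidden subgraph have been split off. The only point to get right is the description of the neighbourhood of a vertex in the complement of an odd cycle, which is handled by the explicit labelling above.
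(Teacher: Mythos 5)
Your proposal is correct and follows essentially the same route as the paper: the abelian centralizer forces the neighbourhood of $g$ in any induced subgraph to be a clique, which is incompatible with $g$ lying on an odd cycle of length at least $5$ or the complement of one. The paper states this in one line and leaves the final case check implicit; you have simply written out that check explicitly.
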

\begin{proof}
The supposition implies that if any two elements $h,k$ are neighbours of $g$ in $\Gammaone(G)$, then there is an edge
between $h$ and $k$, and the result follows.
\end{proof}

\begin{lem}\label{l: central adjustment}
 Let $G$ be a group and let $g,h\in G$ be vertices of a forbidden subgraph of $\Gammaone(G)$. Then
 $gh^{-1}\notin Z(G)$.
\end{lem}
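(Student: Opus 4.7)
My plan is to argue by contradiction. Suppose that $z := gh^{-1}\in Z(G)$. The first step is to observe that central multiplication preserves centralizers: for any $x\in G$, the relation $xgx^{-1}=g$ is equivalent to $x(gz)x^{-1}=gz$, that is, to $xhx^{-1}=h$. Hence $\Cent_G(g)=\Cent_G(h)$. In particular $g$ and $h$ commute (so they are adjacent in $\Gammaone(G)$), and moreover they share the same set of neighbours in $\Gammaone(G)$; in graph-theoretic language, $N_{\Gammaone(G)}[g]=N_{\Gammaone(G)}[h]$.

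Next I would pass to the forbidden subgraph $\Delta$ containing $g$ and $h$. Since $\Delta$ is induced in $\Gammaone(G)$, intersecting the equal closed neighbourhoods with $V(\Delta)$ yields $N_\Delta[g]=N_\Delta[h]$. The remaining task is purely combinatorial: I must show that a forbidden graph cannot contain two distinct vertices with identical closed neighbourhoods.

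By definition $\Delta$ is either an odd cycle $C_{2k+1}$ or its complement $\overline{C_{2k+1}}$, with $k\ge 2$. In $C_{2k+1}$ the closed neighbourhood of a vertex $v_i$ is the arc $\{v_{i-1},v_i,v_{i+1}\}$ of three consecutive points, which determines $v_i$; so distinct vertices have distinct closed neighbourhoods. In $\overline{C_{2k+1}}$ the closed neighbourhood of $v_i$ is $V(\Delta)\setminus\{v_{i-1},v_{i+1}\}$, and equality of closed neighbourhoods at $v_i$ and $v_j$ with $i\neq j$ would force $\{v_{i-1},v_{i+1}\}=\{v_{j-1},v_{j+1}\}$; a short index calculation modulo $2k+1$ shows this is possible only when $2k+1$ divides $4$, which is ruled out by the hypotheses that $2k+1$ is odd and at least $5$.

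The essential content of the proof is really the opening observation that central perturbation produces a pair of graph-theoretic ``twins''. The only place where effort is required is the antihole case of the combinatorial step, and this is precisely where the hypotheses that the cycle is of odd length at least $5$ are used.
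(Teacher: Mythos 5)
Your proof is correct and follows essentially the same route as the paper: observe that $gh^{-1}\in Z(G)$ forces $\Cent_G(g)=\Cent_G(h)$, hence $g$ and $h$ are ``twins'' in $\Gammaone(G)$, and then note that no forbidden subgraph contains two distinct vertices with the same (closed) neighbourhood. The only difference is that you verify this last combinatorial fact explicitly for odd holes and antiholes, whereas the paper simply asserts the existence of a distinguishing third vertex; your index computation in the antihole case is a welcome filling-in of that detail.
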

\begin{proof}
Suppose that $gh^{-1}\in Z(G)$. Then $\Cent_G(g)=\Cent_G(h)$, and so $g$ and $h$ have the
same neighbours in $\Gammaone(G)$. But for any two distinct vertices $u,v$ of a forbidden subgraph, there
is a third vertex $w$ of the subgraph which is connected to $u$ but not to $v$.
\end{proof}

The next lemma helps us to pass between simple groups and their quasisimple covers.
\begin{lem}\label{l: pass to qs}
Let $G$ be a group, let $Z$ be a central subgroup of $G$, let $K=G/Z$, and let ${\varphi:G\to K}$ be
the natural projection. Let $\Omega\subseteq K$ and suppose that for each $\omega\in\Omega$, the elements of
$\varphi^{-1}(\omega)$ are pairwise non-conjugate. Then $\Gammaone(\Omega)$ is perfect if and only if
$\Gammaone(\varphi^{-1}(\Omega))$ is perfect.
\end{lem}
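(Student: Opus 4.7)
The plan is to identify $\Gammaone(\varphi^{-1}(\Omega))$ as a graph-theoretic blow-up of $\Gammaone(\Omega)$, in which each vertex is replaced by a clique of size $|Z|$ and each edge by the complete bipartite graph between the corresponding cliques. The conclusion will then follow from the standard fact that perfection is preserved and detected under such blow-ups.

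First I would establish the key \emph{commutation transfer}: for $g_1, g_2 \in G$ with $\omega_i := \varphi(g_i) \in \Omega$, we have $g_1 g_2 = g_2 g_1$ in $G$ if and only if $\omega_1 \omega_2 = \omega_2 \omega_1$ in $K$. The forward direction is immediate. For the reverse, if $[g_1,g_2] \in Z$ then $g_1 g_2 g_1^{-1} = g_2 z$ for some $z \in Z$; but $g_2$ and $g_2 z$ both lie in $\varphi^{-1}(\omega_2)$ and are conjugate in $G$, so the hypothesis forces $z=1$.

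Next I would show that every $\omega \in \Omega \cap Z(K)$ satisfies $\varphi^{-1}(\omega) \subseteq Z(G)$. For any such $g \in \varphi^{-1}(\omega)$ and any $h \in G$ one has $[g,h] \in Z$, so $hgh^{-1} = gz$ for some $z \in Z$; applying the hypothesis to the conjugate pair $g, gz \in \varphi^{-1}(\omega)$ again yields $z = 1$. Hence the vertex set of $\Gammaone(\varphi^{-1}(\Omega))$ equals $\varphi^{-1}(\Omega \setminus Z(K))$, and decomposes as a disjoint union of $Z$-cosets, one of size $|Z|$ lying above each vertex of $\Gammaone(\Omega)$. Each such coset is a clique, and by the commutation transfer, two elements from distinct cosets are adjacent in $\Gammaone(\varphi^{-1}(\Omega))$ if and only if their $\varphi$-images are adjacent in $\Gammaone(\Omega)$. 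This exhibits $\Gammaone(\varphi^{-1}(\Omega))$ as a blow-up of $\Gammaone(\Omega)$ of the type described above.

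It remains to invoke Lov\'asz's replication lemma: adjoining to a perfect graph a duplicate of one of its vertices, with the same external neighbours and joined to the original, yields a perfect graph; and conversely, an induced subgraph of a perfect graph is perfect. Iterating $|Z|-1$ times at each vertex shows that a graph $\Delta$ is perfect if and only if the blow-up of $\Delta$ by cliques of size $|Z|$ is perfect, and the conclusion follows by taking $\Delta = \Gammaone(\Omega)$. The main obstacle, in my view, is the first step: recognising that the pairwise non-conjugacy hypothesis is precisely the condition needed to upgrade a ``commute modulo $Z$'' relation in $K$ into genuine commutation in $G$; once this is in place, the rest reduces to routine graph theory.
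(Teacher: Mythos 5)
Your proof is correct, and the crucial algebraic step is exactly the one the paper uses: the pairwise non-conjugacy hypothesis upgrades $[g_1,g_2]\in Z$ to $[g_1,g_2]=1$, so that commutation in $G$ can be read off in $K$. Where you diverge is in how the graph theory is finished. The paper picks a transversal $\Omega'$ (one pre-image per element of $\Omega$), observes $\Gammaone(\Omega')\cong\Gammaone(\Omega)$ to get one implication, and for the converse takes a forbidden subgraph of $\Gammaone(\varphi^{-1}(\Omega))$, uses Lemma~\ref{l: central adjustment} to see that its vertices meet each fibre at most once, and completes it to a transversal; this direction therefore leans on the Strong Perfect Graph Theorem (non-perfect $\Rightarrow$ contains a forbidden subgraph), which is consistent with the paper's declared strategy of characterising Berge graphs throughout. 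You instead describe the whole of $\Gammaone(\varphi^{-1}(\Omega))$ as the blow-up of $\Gammaone(\Omega)$ by cliques of size $|Z|$ and quote Lov\'asz's replication lemma together with heredity of perfection under induced subgraphs. Your route buys two things: it is self-contained modulo a classical and comparatively elementary lemma rather than the SPGT, and it yields a sharper structural statement (the big graph \emph{is} a clique blow-up, not merely perfection-equivalent to the small one). You are also more careful than the paper on the vertex-set issue, checking that elements of $\Omega\cap Z(K)$ pull back into $Z(G)$ so that the fibres really do partition the vertex set of $\Gammaone(\varphi^{-1}(\Omega))$; the paper's transversal argument quietly needs this too. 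The paper's version, in exchange, stays uniform with the forbidden-subgraph machinery used everywhere else in Section~\ref{s: proof}. Both arguments are sound.
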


\begin{proof}
For each $\omega \in\Omega$, let $\omega'$ be a pre-image
in $G$ of $\omega$, and let $\Omega'$ be the set $\{\omega' \mid \omega \in \Omega\}$.
It is clear that if two elements $h,\distinguish{h}\in \Omega'$ commute, then $\varphi(h)$ and
$\varphi(\distinguish{h})$ commute. On the other hand if $\varphi(h)$ and $\varphi(\distinguish{h})$ commute,
then $[\distinguish{h},h]\in Z$, and
so $h\distinguish{h}h^{-1}=z\distinguish{h}$ for some $z\in Z(G)$. But now the condition on pairwise non-conjugacy
implies that $z=1$, and
so $h$ and $\distinguish{h}$ commute. Hence $h$ and $\distinguish{h}$ commute if and only if
$\varphi(h)$ and $\varphi(\distinguish{h})$ commute,
and so we conclude that $\Gammaone(\Omega')\cong \Gammaone(\Omega)$.
Now $\Gammaone(\Omega')$ is an induced subgraph of $\Gammaone(\varphi^{-1}(\Omega))$, from which it follows that
if $\Gammaone(\varphi^{-1}(\Omega))$ is perfect then so is $\Gammaone(\Omega)$.

For the converse suppose that $\Gammaone(\varphi^{-1}(\Omega))$ contains a forbidden subgraph $\Delta$.
Lemma~\ref{l: central adjustment} tells us that the vertices of $\Delta$ have distinct images under $\varphi$.
Now $\Delta$ can be extended to a set $\Omega'$ as described above, and we have seen that
$\Gammaone(\Omega')\cong \Gammaone(\Omega)$. It follows that $\Gammaone(\Omega)$ contains a forbidden
subgraph, as required.
\end{proof}

Note that if $\omega$ is an element of $G$ with order coprime to $|Z(G)|$, then all elements of $\phi^{-1}(\omega)$ are pairwise non-conjugate.

\section{Commuting graphs of quasisimple groups}\label{s: proof}

In this section we study the commuting graphs of quasisimple groups.
We go through the various families given by the classification of finite simple groups, establishing which
groups have Berge graphs as their commuting graphs. These are precisely the perfect commuting graphs, by
the Strong Perfect Graph Theorem.

A technique we use frequently to show that a group $G$ has a non-perfect commuting graph,
is to exhibit a subgroup for which this is already known, and then invoke Lemma \ref{l: subgroup obstruction}.
Both the Strong Perfect Graph Theorem and Lemma
\ref{l: subgroup obstruction} will therefore be in constant use in this section. We shall usually suppress
explicit references to them, in order to avoid tedious repetitions.

In the cases where no subgroup of $G$ is known to have a non-perfect commuting graph, it is necessary
to determine whether $\Gamma(G)$ contains a forbidden subgraph directly. We use a variety of techniques for exhibiting
odd length cycles. In most cases these have length $5$,
but we have made no particular effort to describe the shortest cycle possible. Indeed, for the infinite
families $2\cprod \Alt_n$ and ${}^2G_2(q)$ our arguments yield $7$-cycles in the commuting graphs, although it is known
that $5$-cycles exist in almost all of the groups in these families. In fact we know of only three finite
quasisimple groups $G$ such that $\Gamma(G)$ is non-perfect, but contains no induced subgraph
isomorphic to a $5$-cycle; these are
\[
2\cprod \Alt_7,\ \PSL_2(13),\ \PSL_2(17),
\]
each of which can be shown to have a $7$-cycle as a forbidden subgraph.
We believe that there exist no further
examples, but we have not attempted to prove this.

\subsection{Alternating groups}

\begin{lem}\label{l: S5}
 Let $G=\Sym_5$, the symmetric group on $5$ letters. Then $\Gammaone(G)$ is not perfect.
\end{lem}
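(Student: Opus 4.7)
The strategy is to invoke the Strong Perfect Graph Theorem and exhibit an explicit induced $5$-cycle in $\Gammaone(\Sym_5)$. The natural place to look is among the transpositions: two transpositions in $\Sym_n$ commute precisely when they are either equal or disjoint, so the induced subgraph of $\Gammaone(\Sym_5)$ supported on the $10$ transpositions is isomorphic to the Kneser graph $K(5,2)$, better known as the Petersen graph. Since the Petersen graph has girth $5$, any $5$-cycle in it is automatically induced; in fact there are many such cycles.

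Concretely, the plan is to write down five transpositions $t_1,\ldots,t_5$ whose supports, viewed as $2$-subsets of $\{1,\ldots,5\}$, form a $5$-cycle in $K(5,2)$. For example, one may take
\[
t_1=(12),\quad t_2=(34),\quad t_3=(15),\quad t_4=(23),\quad t_5=(45).
\]
A routine check confirms that consecutive pairs $\{t_i,t_{i+1}\}$ (indices modulo $5$) have disjoint supports and hence commute, while non-consecutive pairs share exactly one point of $\{1,\ldots,5\}$ and therefore do not commute. Thus $t_1,\ldots,t_5$ induce a $5$-cycle in $\Gammaone(\Sym_5)$, which is a forbidden subgraph, and the Strong Perfect Graph Theorem yields the non-perfectness of $\Gammaone(\Sym_5)$.

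There is no real obstacle here; the only thing to be careful about is ensuring that the chosen $5$-cycle is \emph{induced}, i.e.\ that the non-edges of the cycle are indeed non-edges of the commuting graph. This is automatic from the description in terms of the Petersen graph, but it is cleaner in the write-up simply to verify it on the five explicit transpositions listed above.
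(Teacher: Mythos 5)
Your proof is correct and is essentially identical to the paper's: the authors also exhibit an induced $5$-cycle on the five transpositions $(1\,5),(1\,2),(2\,3),(3\,4),(4\,5)$ (the very same set you use, merely listed in the complementary cyclic order), and appeal to the Strong Perfect Graph Theorem. The Petersen-graph framing is a pleasant gloss but the underlying argument is the same.
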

\begin{proof}
 The induced subgraph on the vertices $(1\, 2), (2\,3), (3\,4), (4\,5), (1\,5)$ is a $5$-cycle.
\end{proof}

\begin{lem}\label{l: alternating}
 Let $G=\Alt_n$, the alternating group on $n$ letters. Then $\Gammaone(G)$ is perfect if and only if $n\leq 6$.
\end{lem}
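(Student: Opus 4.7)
The plan is to handle the two directions of the equivalence separately. For the ``if'' direction---that $\Gammaone(\Alt_n)$ is perfect when $n\leq 6$---I would begin by observing that $\Alt_n$ is an AC-group for $n\leq 5$. Indeed, in $\Alt_5$ the non-identity centralizers are readily computed to be Klein four-groups (for double transpositions), $C_3$ (for $3$-cycles), and $C_5$ (for $5$-cycles); the cases $n\leq 4$ are easier still. Lemma~\ref{l: abelian centralizers} then shows that no vertex of $\Gammaone(\Alt_n)$ lies on a forbidden subgraph, so $\Gammaone(\Alt_n)$ is perfect by the Strong Perfect Graph Theorem---in fact it is a disjoint union of cliques.

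The main obstacle is the case $n=6$, since $\Alt_6$ is not an AC-group: for instance, the centralizer in $\Alt_6$ of $(1\,2)(3\,4)$ contains both $(1\,3)(2\,4)$ and $(1\,2)(5\,6)$, which do not commute, so this centralizer is dihedral of order $8$. I would handle $\Alt_6$ by invoking the exceptional isomorphism $\Alt_6\cong \PSL_2(9)$ and deferring to the treatment of the linear groups in a later subsection, where $\PSL_2(q)$ will be shown to have perfect commuting graph for every prime power $q\geq 4$ (using Lemma~\ref{l: pass to qs} to pass between $\SL_2(q)$ and $\PSL_2(q)$).

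For the reverse direction---that $\Gammaone(\Alt_n)$ is non-perfect when $n\geq 7$---the strategy is to apply Lemma~\ref{l: subgroup obstruction} after exhibiting a copy of $\Sym_5$ inside $\Alt_n$. The map $\varphi\colon \Sym_5\to \Alt_7$ defined by sending $\sigma$ to $\sigma$ if $\sigma$ is even and to $\sigma\cdot (6\,7)$ if $\sigma$ is odd is easily verified to be an injective homomorphism (the key point being that $(6\,7)$ commutes with every element of $\Sym_5$ viewed as fixing $6,7$, and that parities multiply correctly). Composing $\varphi$ with the obvious inclusion $\Alt_7\hookrightarrow \Alt_n$ that fixes the remaining $n-7$ points produces an embedding $\Sym_5\hookrightarrow\Alt_n$ for every $n\geq 7$, and Lemma~\ref{l: S5} then completes the argument.
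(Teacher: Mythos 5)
Your treatment of $n\leq 5$ (AC-group plus Lemma~\ref{l: abelian centralizers}) and of $n\geq 7$ (embedding $\Sym_5$ in $\Alt_n$ and applying Lemmas~\ref{l: S5} and \ref{l: subgroup obstruction}) matches the paper's argument. The problem is the case $n=6$, which is where essentially all of the work lies, and your proposed deferral to the linear groups does not go through. First, the assertion that $\PSL_2(q)$ has a perfect commuting graph for every prime power $q\geq 4$ is false: the paper proves (Lemma~\ref{l: psl2 first}) that $\Gammaone(\PSL_2(q))$ is \emph{not} perfect for odd $q>9$. The uniform statement holds for $\SL_2(q)$, which is covered by Lemma~\ref{l: sl2} because every nontrivial centralizer in a subgroup of $\GL_2(q)$ is abelian; but $\PSL_2(9)\cong\Alt_6$ is not an AC-group (as you yourself note, an involution has centralizer $D_8$), so no such argument applies to it. Second, Lemma~\ref{l: pass to qs} cannot be used to transfer perfection from $\SL_2(9)$ down to $\PSL_2(9)$: its hypothesis requires the two preimages of each relevant element to be non-conjugate, but an involution of $\PSL_2(9)$ lifts to a pair $\{x,-x\}=\{x,x^{-1}\}$ of elements of order $4$ in $\SL_2(9)$, and these are conjugate. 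Indeed the conclusion would have to fail, since the induced graph on involutions of $\PSL_2(9)$ is not a union of cliques while the corresponding graph upstairs is. Third, even as a matter of organization the deferral is circular: the paper's linear-groups section explicitly handles $\PSL_2(9)$ by citing the alternating-groups result for $\Alt_6$, so someone has to prove the $\Alt_6$ case directly.

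What is missing is a genuine argument for $\Alt_6$. The paper's proof runs as follows: every vertex of a forbidden subgraph must be an involution (all other nontrivial elements have abelian centralizers); since the centralizer of an involution is $D_8$, containing only five involutions, a forbidden subgraph must be an odd cycle or the complement of a $7$-cycle; the complement of a $7$-cycle is excluded by listing the possible neighbours of two commuting involutions; and an odd cycle is excluded by colouring each edge according to which of the two conjugacy classes of Klein four-subgroups its endpoints generate, which yields a proper $2$-colouring of the edges of an odd cycle, a contradiction. You would need to supply something of this kind to close the gap.
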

\begin{proof}
The centralizer of every non-identity element of $\Alt_5$ is abelian, and so $\Gammaone(\Alt_5)$ is perfect by
Lemma~\ref{l: abelian centralizers}. On the other hand, if $n\geq 7$ then $\Alt_n$ has a subgroup isomorphic to
$\Sym_5$, and so Lemma~\ref{l: S5} tells us that $\Gammaone(\Alt_n)$ is not perfect.

It remains to  deal with the case $n=6$. The only non-trivial elements of $\Alt_6$
with non-abelian centralizers have order $2$, and so if $\Lambda$
is a forbidden subgraph of $\Gammaone(G)$, then every vertex of $\Lambda$ is an involution. Let $g\in G$ be a vertex of
$\Lambda$.
Observe that $\Cent_G(g)\cong D_8$, the dihedral group of order $8$. Since there are only five involutions in $D_8$ we
conclude that $\Lambda$ is either a cycle of odd order, or else the complement of a $7$-cycle. (Recall that the
complement of a $5$-cycle is another $5$-cycle.)

Suppose that $\Lambda$ is the complement of a $7$-cycle. We may assume that one of its vertices is
$(1\,2)(3\,4)$. The neighbours of this vertex in $\Lambda$ can only be the other four involutions in its
centralizer, namely
\[
(1\,3)(2\,4),\ (1\,2)(5\,6),\ (1\,4)(2\,3),\ (3\,4)(5\,6).
\]
Similarly the neighbours of $(1\,4)(2\,3)$ must be
\[
(1\,2)(3\,4),\ (1\,3)(2\,4),\ (1\,4)(5\,6),\ (2\,3)(5\,6).
\]
But we have now listed seven involutions (not including repetitions), and it is easily checked that the
induced subgraph on these vertices is not the complement of a $7$-cycle; this is a contradiction.

We have still to show that $\Lambda$ cannot be a cycle of odd order at least $5$.
The group $G$ has two conjugacy classes of subgroups isomorphic to $C_2\times C_2$. Exactly one
subgroup from each class is contained in $\Cent_G(g)$; let these subgroups be $A$ and $B$.
If $h$ and $h'$ are the neighbours of $g$ in $\Lambda$, then
since $h$ and $h'$ do not commute, we see that the subgroups $\langle g,h\rangle$ and $\langle g,h'\rangle$ are
distinct; so one of them is $A$ and the other $B$. It follows that if we colour each edge of $\Lambda$ according
to whether the vertices it connects generate a conjugate of $A$ or a conjugate of $B$, then we have a $2$-colouring of
the edges of $\Lambda$. But this is a contradiction, since a cycle of odd length is not $2$-colourable.
\end{proof}

\begin{cor}\label{c: symmetric}
Let $G=\Sym_n$, the symmetric group on $n$ letters. Then $\Gammaone(G)$ is perfect if and only if $n\leq 4$.
\end{cor}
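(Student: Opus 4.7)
The plan is to follow the structure of Lemma \ref{l: alternating}. If $n\ge 5$, then $\Sym_n$ contains a subgroup isomorphic to $\Sym_5$, so Lemma \ref{l: S5} together with Lemma \ref{l: subgroup obstruction} yields that $\Gammaone(\Sym_n)$ is not perfect. This handles the ``only if'' direction immediately.

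For the ``if'' direction, the cases $n\le 3$ are trivial: the centralizer in $\Sym_n$ of every non-central element is abelian (the only non-identity centralizers to check are cyclic subgroups of orders $2$ or $3$), so Lemma \ref{l: abelian centralizers} shows that $\Gammaone(\Sym_n)$ contains no forbidden subgraph.

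The only case requiring genuine work is $n=4$. The approach is to read off from the conjugacy class sizes that the centralizer orders in $\Sym_4$ are $4,8,3,4$ for transpositions, double transpositions, $3$-cycles and $4$-cycles respectively; thus the transposition centralizers are isomorphic to $C_2\times C_2$, the $3$-cycle and $4$-cycle centralizers are cyclic, and only the three double transpositions have non-abelian centralizer (isomorphic to $D_8$). By Lemma \ref{l: abelian centralizers}, any forbidden subgraph of $\Gammaone(\Sym_4)$ must have all its vertices among these three double transpositions. Since a forbidden subgraph has order at least $5$, this is impossible, and $\Gammaone(\Sym_4)$ is perfect.

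There is no real obstacle here; the argument is a short case analysis and the main observation is simply to isolate the elements of $\Sym_4$ whose centralizers fail to be abelian and note that there are too few of them to support a cycle of length $5$.
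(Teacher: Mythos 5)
Your proof is correct, and the ``only if'' direction coincides with the paper's (an embedded $\Sym_5$ plus Lemmas~\ref{l: S5} and \ref{l: subgroup obstruction}). For the ``if'' direction, however, you take a genuinely different and more self-contained route. The paper disposes of $n\le 4$ in one line by observing that $\Sym_4$ embeds in $\Alt_6$ and invoking Proposition~\ref{p: alternating} (perfection is inherited by induced subgraphs, which is the contrapositive of Lemma~\ref{l: subgroup obstruction}); this is shorter but leans on the nontrivial analysis of $\Gammaone(\Alt_6)$ carried out in Lemma~\ref{l: alternating}. You instead argue directly inside $\Sym_4$: the only elements with non-abelian centralizer are the three double transpositions (centralizer $D_8$), so by Lemma~\ref{l: abelian centralizers} any forbidden subgraph would have all its vertices among just three elements, which is impossible since a forbidden subgraph has order at least $5$. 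Your centralizer computations are all accurate, and the argument for $n\le 3$ is fine (indeed these cases also follow from the $n=4$ case via Lemma~\ref{l: subgroup obstruction}). The trade-off is clear: the paper's proof is economical given the machinery already in place, while yours is elementary, independent of the $\Alt_6$ analysis, and arguably more illuminating about why $\Sym_4$ is the cut-off.
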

\begin{proof}
 Lemma~\ref{l: S5} implies that $\Gammaone(G)$ is not perfect if $n\geq 5$. On the
 other hand $\Alt_6$ has a subgroup isomorphic to $\Sym_4$ and so Proposition~\ref{p: alternating} implies that $\Gammaone(\Sym_4)$ is perfect; hence the same is true of
 $\Gammaone(\Sym_n)$ with $n<4$.
\end{proof}

Now we generalize Lemma~\ref{l: alternating} to deal with quasisimple covers of alternating groups.

\begin{prop}\label{p: alternating}
Let $G$ be a quasisimple group such that $G/Z(G)\cong \Alt_n$ for some $n\geq 5$. Then $\Gammaone(G)$ is perfect if
and only if $G$ is equal to one of the groups in the following list:
\[
\Alt_5,\ \Alt_6,\ 2\cprod \Alt_5,\ 2\cprod \Alt_6,\ 3\cprod \Alt_6,\ 6\cprod \Alt_6,\ 6\cprod \Alt_7.
\]
\end{prop}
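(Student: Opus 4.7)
The plan is to analyze each quasisimple cover of $\Alt_n$ in turn. Since the Schur multiplier of $\Alt_n$ is cyclic of order $2$ for $n\notin\{6,7\}$ and cyclic of order $6$ for $n\in\{6,7\}$, the list of quasisimple covers to consider is completely determined; in particular, for $n\geq 8$ it consists only of $\Alt_n$ and $2\cprod\Alt_n$.

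I would first dispatch the non-perfection cases. For $n\geq 8$, Lemma~\ref{l: alternating} handles $\Alt_n$ itself, while $2\cprod\Alt_n$ contains $2\cprod\Alt_7$ (as the preimage of the chain of point-stabilizers $\Alt_{n-1}\supset\cdots\supset\Alt_7$), so the question reduces to $n=7$. There $\Alt_7$ is again covered by Lemma~\ref{l: alternating}. For $3\cprod\Alt_7$, I use the embedding $\Sym_5\hookrightarrow\Alt_7$ obtained by pairing each odd permutation with the transposition $(6\,7)$; its preimage in $3\cprod\Alt_7$ is a central extension of $\Sym_5$ by $\zz/3$, and a universal-coefficient calculation (or equivalently, the fact that the Schur multiplier of $\Sym_5$ has no $3$-part) shows the extension splits, so $\Sym_5\le 3\cprod\Alt_7$, and Lemmas~\ref{l: S5} and~\ref{l: subgroup obstruction} conclude. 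The case $2\cprod\Alt_7$ is awkward because, as the paper already remarked, its commuting graph contains no induced $5$-cycle; I would exhibit an explicit induced $7$-cycle (easily produced and verified computationally in GAP).

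I would then prove the perfection cases. The simple groups $\Alt_5$ and $\Alt_6$ are handled by Lemma~\ref{l: alternating}. The groups $2\cprod\Alt_5\cong\SL_2(5)$ and $2\cprod\Alt_6\cong\SL_2(9)$ will be shown to be AC-groups in the subsequent $\SL_2(q)$ analysis, so Lemma~\ref{l: abelian centralizers} applies; similarly $6\cprod\Alt_6\cong 3\cprod\SL_2(9)$ is an AC-group, by direct or computational inspection of its centralizers. For $3\cprod\Alt_6$, I would apply Lemma~\ref{l: pass to qs} to the projection $3\cprod\Alt_6\to\Alt_6$: since the centre has order~$3$, every element of $\Alt_6$ whose order is coprime to~$3$ automatically has a pairwise non-conjugate fibre, and the only remaining possibilities are the two conjugacy classes of elements of order~$3$ (the classes of $3$-cycles and of $(3,3)$-cycles), which can be checked --- by a standard character-theoretic or computational argument --- to have fibres in $3\cprod\Alt_6$ consisting of three pairwise non-conjugate elements. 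The non-conjugacy hypothesis of Lemma~\ref{l: pass to qs} is therefore satisfied throughout, and the perfection of $\Gammaone(\Alt_6)$ transports upwards to $\Gammaone(3\cprod\Alt_6)$.

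The main obstacle is the case $6\cprod\Alt_7$: it is a cover of the simple group $\Alt_7$, whose own commuting graph is non-perfect, so there is no clean structural reduction available. Lemma~\ref{l: pass to qs} cannot be applied directly, because the forbidden subgraph of $\Gammaone(\Alt_7)$ supplied by $\Sym_5\le\Alt_7$ consists of involutions whose fibres in $6\cprod\Alt_7$ do not behave uniformly. I would resolve this case by direct computational verification: $6\cprod\Alt_7$ has order $30240$, well within the reach of GAP or Magma, and one checks that $\Gammaone(6\cprod\Alt_7)$ contains no induced odd cycle of length at least~$5$ nor any complement of such.
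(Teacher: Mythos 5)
Your overall architecture (Schur multiplier bookkeeping, reduction of $n\geq 8$ to $2\cprod\Alt_7$, the AC-group cases, the splitting trick giving $\Sym_5\le 3\cprod\Alt_7$) agrees with or harmlessly varies the paper's proof, but two of your cases do not hold up. The first is $3\cprod\Alt_6$: your claim that the fibres of the order-$3$ elements of $\Alt_6$ consist of three pairwise non-conjugate elements is false. In $3\cprod\Alt_6$ the preimage of each of the two classes of order-$3$ elements of $\Alt_6$ is a \emph{single} conjugacy class of size $120$. (Count classes: $3\cprod\Alt_6$ has $17$ conjugacy classes; the five classes of $\Alt_6$ with order coprime to $3$ each split into three classes upstairs, accounting for $15$, which leaves exactly one class over each of the two order-$3$ classes. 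Equivalently, the Sylow $3$-subgroup of $3\cprod\Alt_6$ is extraspecial of order $27$, so there are commuting order-$3$ elements $x,y$ of $\Alt_6$ with $[\hat x,\hat y]=z\neq 1$, whence $\hat y$ is conjugate to $z\hat y$.) So the hypothesis of Lemma~\ref{l: pass to qs} fails on $\Omega=\Alt_6$, and the check you propose would refute rather than confirm it. The repair is the paper's: because those preimage classes have size $120$, their elements have centralizers of order $9$, hence abelian, so by Lemma~\ref{l: abelian centralizers} only the lifts of involutions can lie on a forbidden subgraph, and on that set Lemma~\ref{l: pass to qs} applies cleanly.

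The second problem is $6\cprod\Alt_7$. ``Direct computational verification'' of perfection is not feasible as stated: one must exclude induced odd cycles \emph{and their complements of every odd length at least $5$} in a graph on roughly $30000$ vertices, and neither exhaustive search nor the known polynomial-time Berge-graph recognition algorithm is practical at that scale; you offer no reduction that would make it so. The paper's argument here is structural and is the real content of this case: only two conjugacy classes of elements of $6\cprod\Alt_7$ have non-abelian centralizers, commuting conjugate elements among them have \emph{equal} centralizers, and the quotient of the induced graph obtained by identifying vertices with the same centralizer is bipartite; the induced graph is then a blow-up of a bipartite graph by cliques, hence perfect, and Lemma~\ref{l: abelian centralizers} finishes. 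You need this (or an equally drastic reduction) before any computation could close the case.
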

\begin{proof}
If $G$ is simple then it is either $\Alt_5$ or $\Alt_6$ by Lemma~\ref{l: alternating}, and so we may
suppose that $G$ is not simple.
If $n\geq 7$ and $|Z(G)|=2$, then the induced
subgraph in $\Alt_n$ on the vertices
\[
(1\,2\,3),\ (4\,5\,6),\ (1\,2\,7),\ (3\,4\,5),\ (1\,6\,7),\ (2\,3\,4),\ (5\,6\,7)
\]
is a $7$-cycle. Since all of these elements have order $3$, while $Z(G)$ has order $2$, each element lifts to
elements of order $3$ and $6$ in $G$. Now the lifts commute exactly when their projective images commute,
and so the induced subgraph of $\Gammaone(G)$ on the lifts of order $3$ is a $7$-cycle.

Since the Schur multiplier of $\Alt_n$ has order $2$ for $n\geq 8$, we may now suppose that
$n\leq 7$. We deal with the remaining groups one by one.

The groups $2\cprod \Alt_5, 2\cprod \Alt_6$ and $6\cprod \Alt_6$ are AC-groups (being isomorphic to $\SL_2(5)$,
$\SL_2(9)$ and $3\cprod \SL_2(9)$ respectively), and so have perfect commuting graphs by
Lemma~\ref{l: abelian centralizers}.

In $3\cprod \Alt_6$, the only non-central
elements with non-abelian centralizers are elements whose square is central (i.e.\ they are lifts of involutions in
$\Alt_6$). No forbidden subgraph
can contain two vertices in the same coset of the centre, as these would have the same set of neighbours.
So we may restrict our attention to the commuting graph on the involutions of $3\cprod \Alt_6$.
But this commuting graph is isomorphic to the commuting graph on the involutions of $\Alt_6$ (see Lemma~\ref{l: pass to qs}), which we have already
seen to be perfect.

To exclude $3\cprod \Alt_7$ we recall that $\Gammaone(\Alt_7)$ contains a $5$-cycle whose vertices are involutions.
Each of these involutions lifts to a unique involution in $3\cprod \Alt_7$ and the induced subgraph of
$\Gammaone(3\cprod \Alt_7)$ on these involutions is, again, a $5$-cycle.

Finally suppose that $G=6\cprod \Alt_7$. There
are two conjugacy classes of non-abelian subgroups which arise as centralizers in $G$. Let $T$ be the set of elements
with non-abelian centralizers and $\Gamma(T)$ the induced subgraph of $\Gamma(G)$ with vertices in $T$. It is a
straightforward computation that if $g,h\in G$ are conjugate elements of $T$ which commute, then their centralizers are
equal. Let ${\Gamma(T)/\sim}$  be the quotient of $\Gammaone(G)$ obtained by identifying vertices with the same
centralizer. Then this graph is bipartite, and hence perfect. It follows easily that $\Gammaone(G)$ is perfect.
\end{proof}

\subsection{Linear groups of dimension \texorpdfstring{$2$}{2}}

\begin{lem}\label{l: sl2}
 Suppose that $G\leq\GL_2(q)$. Then $\Gammaone(G)$ is perfect.
\end{lem}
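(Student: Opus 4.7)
The plan is to reduce to the key structural fact that $\GL_2(q)$ itself is an AC-group, and then invoke Lemma~\ref{l: abelian centralizers}. Concretely, I would first observe that any non-central element $g \in \GL_2(q)$ is not a scalar matrix, so its minimal polynomial has degree $2$ and coincides with its characteristic polynomial. Consequently the centralizer of $g$ in the matrix algebra $\Mat_2(\Fq)$ is exactly the commutative subalgebra $\Fq[g]$, and hence $\Cent_{\GL_2(q)}(g) = \Fq[g]^{\times}$ is abelian.

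Next I would transfer this to an arbitrary subgroup $G \leq \GL_2(q)$. Suppose $g \in G \setminus Z(G)$. Then $g$ cannot be a scalar matrix, for otherwise it would centralize all of $\GL_2(q)$ and in particular lie in $Z(G)$. So $g$ is non-central in $\GL_2(q)$, and by the previous paragraph $\Cent_{\GL_2(q)}(g)$ is abelian. Since $\Cent_G(g) \leq \Cent_{\GL_2(q)}(g)$, we conclude that $\Cent_G(g)$ is abelian, i.e.\ $G$ is an AC-group.

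Finally I would apply Lemma~\ref{l: abelian centralizers}: every vertex $g$ of $\Gammaone(G)$ has abelian centralizer in $G$, and so no vertex lies on a forbidden subgraph. Hence $\Gammaone(G)$ contains no induced odd cycle of length at least $5$ and no complement of such, and the Strong Perfect Graph Theorem yields perfection.

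There is no real obstacle here; the only thing to be careful about is the elementary but essential point that a non-central element of $G$ is automatically non-scalar in $\GL_2(q)$, which is what allows one to promote ``$\GL_2(q)$ is an AC-group'' to ``every subgroup of $\GL_2(q)$ is an AC-group.''
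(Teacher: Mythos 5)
Your proof is correct and follows the same route as the paper: the paper's proof simply asserts that every non-trivial (more precisely, non-central) element of $G$ has abelian centralizer and then applies Lemma~\ref{l: abelian centralizers}. You have merely supplied the justification the paper leaves implicit, namely that a non-scalar element of $\GL_2(q)$ has centralizer contained in the commutative algebra $\Fq[g]$, and the reduction from $\GL_2(q)$ to an arbitrary subgroup $G$.
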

\begin{proof}
 If $g$ is a non-central element of $G$, then $\Cent_G(g)$ is abelian. Now the result follows from
 Lemma~\ref{l: abelian centralizers}.
\end{proof}

Lemma~\ref{l: sl2} implies, in particular, that the quasisimple groups $\SL_2(q)$ have perfect commuting graphs. The
next result deals with most of the remaining $2$-dimensional quasisimple groups.

\begin{lem}\label{l: psl2 first}
 If $G=\PSL_2(q)$ with $q$ odd and $q>9$, then $\Gammaone(G)$ is not perfect.
\end{lem}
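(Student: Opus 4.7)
The plan is first to reduce the problem to the commuting subgraph on involutions, and then to exhibit an explicit odd-length induced cycle. In $G = \PSL_2(q)$ with $q$ odd, every non-identity element that is not an involution has abelian centralizer: unipotent elements are centralized by the elementary-abelian Sylow $p$-subgroup of order $q$, while non-involution semisimple elements are centralized by a cyclic torus of order $(q-1)/2$ or $(q+1)/2$. Only involutions have non-abelian centralizer (dihedral of order $q \pm 1$, which is non-abelian once $q > 5$). By Lemma~\ref{l: abelian centralizers}, every vertex of a forbidden subgraph of $\Gammaone(G)$ must therefore be an involution. Two involutions in $G$ commute if and only if their product is itself an involution, equivalently if and only if they generate a Klein four-group.

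The second step is to use the standard geometric description of involutions. For $q \equiv 1 \pmod 4$, each involution is uniquely determined by its pair of fixed points on $\mathbb{P}^1(\F_q)$, and two distinct involutions commute if and only if each swaps the fixed pair of the other (equivalently, the two pairs form a harmonic quadruple). An analogous parametrization holds when $q \equiv 3 \pmod 4$, using the Galois-conjugate pair of fixed points on $\mathbb{P}^1(\F_{q^2})$. The commuting graph restricted to involutions is thus a combinatorial object on pairs of projective points, and the question becomes whether one can find five such pairs $P_1, \ldots, P_5$ with $P_i$ and $P_{i+1}$ (cyclically) harmonic while non-consecutive pairs are not.

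To construct such a $5$-cycle explicitly I would begin with $P_1 = \{0, \infty\}$ and $P_2 = \{1, -1\}$, then choose $P_3 = \{c, 1/c\}$ for a generic $c \in \F_q^\times \setminus \{\pm 1\}$, and continue picking $P_4$, $P_5$ from the one-parameter families of pairs swapped by the previous involution. Each of the required commutativity and non-commutativity conditions translates into a single polynomial equation or inequation in the chosen scalars over $\F_q$, so for $q$ sufficiently large a valid choice exists. For $q \equiv 3 \pmod 4$ the same argument is carried out after base-change to $\F_{q^2}$, with Galois-equivariance ensuring the resulting involutions lie in $\PSL_2(q)$.

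The principal obstacle is the exceptional small cases. As the introduction notes, $\PSL_2(13)$ and $\PSL_2(17)$ have commuting graphs that contain no induced $5$-cycle at all, so for these two values one must instead exhibit an induced $7$-cycle of involutions. This can be done by a more careful explicit construction along the same lines, or simply by direct computer verification. For $q = 11$ and all odd $q \geq 19$, the polynomial conditions leave enough room in $\F_q$ to make the $5$-cycle construction succeed, completing the proof.
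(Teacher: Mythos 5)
Your reduction to the involution graph is correct and matches the paper's starting point: only involutions of $\PSL_2(q)$ have non-abelian centralizers, so by Lemma~\ref{l: abelian centralizers} any forbidden subgraph lives on the conjugacy class $T$ of involutions, and your harmonic-pairs description of adjacency is the right combinatorial model. But the heart of your argument --- actually producing an induced odd cycle --- is missing. You reduce everything to ``five pairs $P_1,\dots,P_5$ with consecutive pairs harmonic and non-consecutive pairs not,'' and then assert that the resulting system of polynomial equations and inequations is solvable ``for $q$ sufficiently large.'' That is the entire content of the lemma, and it is not established: you never write down the five pairs, never verify that the five harmonicity equations are simultaneously consistent, and never make ``sufficiently large'' effective. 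Without an explicit threshold $q_0$ you cannot know which small cases remain to be checked by hand or by computer; your claim that the construction succeeds for $q=11$ and all $q\ge 19$, with only $q=13,17$ exceptional, is exactly what needs proof. A second concrete problem is the case $q\equiv 3\pmod 4$: there the involutions of $\PSL_2(q)$ have \emph{no} fixed points on $\mathbb{P}^1(\F_q)$, so your proposed starting pairs $\{0,\infty\}$ and $\{1,-1\}$ do not correspond to involutions of $G$ at all. The pairs must be Galois-conjugate pairs $\{\alpha,\alpha^q\}$ with $\alpha\in\F_{q^2}\setminus\F_q$, and the solvability analysis has to be redone under that constraint; ``the same argument after base-change'' conceals genuine work.

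For comparison, the paper avoids any explicit construction for generic $q$. It first shows that $\Gammaone(T)$ contains no $4$-cycle (the intersection of two distinct involution centralizers lies in a Klein four-group), so any $5$-cycle is automatically induced; it then runs a counting argument on the distance partition $\Omega_1,\Omega_2,\Omega_3$ from a fixed involution, showing that if no induced $5$-cycle exists then $|T|\ge 1+|\Omega_1|+|\Omega_2|+|\Omega_3|$ forces $q^2-(18+8\epsilon)q+(39+18\epsilon)\le 0$, i.e.\ $q\le 7$ or $q\le 17$ according to $\epsilon$. This yields an \emph{effective} list of exceptions, and the survivors $q=13,17$ are then handled by exhibiting an involution $t$ and an element $g$ of order $(q+1)/2$ with $[t,t^g]=1$, whose conjugates $t,t^g,t^{g^2},\dots$ form an induced cycle of odd length $(q+1)/2$. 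If you want to salvage your approach, you must either carry out the five-pair construction explicitly with a verified bound on $q$, or replace the genericity claim with a counting argument of this kind.
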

\begin{proof}
Define $\epsilon$ by
\[
\epsilon = \left\{\begin{array}{ll} 1 & \textrm{if $q\equiv 1 \bmod 4$,} \\ -1 & \textrm{if q $\equiv 3 \bmod 4$.}
\end{array}\right.
\]
The group $G$ has a single class $T$ of involutions, of size $q(q+\epsilon)/2$. This is the only conjugacy class of $G$ whose elements have non-abelian centralizers. The centralizer of each involution is a
dihedral group of order
$q-\epsilon$.

The graph $\Gammaone(T)$ is a regular graph of degree
$(q-\epsilon)/2$. Let $t\in T$, and let $\Omega_d$ be the set of vertices in $\Gammaone(T)$ at distance $d$ from $t$.
Then $|\Omega_1|= (q-\epsilon)/2$. Each vertex $s$ in $\Omega_1$ is connected to exactly one other vertex in $\Omega_1$,
and so $s$ has $(q-\epsilon-4)/2$ neighbours in $\Omega_2$.

We claim that $\Gammaone(T)$ contains no subgraph (induced or otherwise) isomorphic to a $4$-cycle.
To prove this claim, let us suppose that $T'=\{t_1,t_2,t_3,t_4\}$ is a subset of $T$
such that $\Gamma(T')$ contains a $4$-cycle, with vertices in the order listed.
Recall that the intersection of the centralizers of distinct involutions in $\PSL_2(q)$ is abelian,
being a subgroup of a Klein $4$-group. Since each of $t_2$ and $t_4$ centralizes both $t_1$ and $t_3$, we see
that $t_2$ and  $t_4$ commute. So $\Gamma(T')$ is not a $4$-cycle, and the claim is proved.

An immediate corollary to the claim is that if $\Gammaone(T)$ contains a $5$-cycle as a subgraph,
then that subgraph is an induced subgraph. The claim implies, moreover, that no two vertices in $\Omega_1$ have a common neighbour in $\Omega_2$,
and so we have
\[
|\Omega_2| = \frac{1}{4}(q-\epsilon)(q-\epsilon-4).
\]

Each vertex $r$ in $\Omega_2$ has a unique neighbour $s\in \Omega_2$ such that $r$ and $s$ have a common neighbour in
$\Omega_1$. Suppose that $r$ has another neighbour $u\in \Omega_2$. If $r'$ and $u'$ are the neighbours of $r$ and $u$,
respectively, in $\Omega_1$, then it is clear that the induced subgraph on the vertices $t,\ r',\ r,\ u,\ u'$ is a $5$-cycle.

We may suppose, then, that any two neighbours in $\Omega_2$ have a common neighbour in $\Omega_1$, and hence no common
neighbour in $\Omega_3$. Let $x\in\Omega_3$, and suppose that $r$ and $s$ are neighbours of $x$ in $\Omega_2$. Then $r$
and $s$ are not adjacent, and have no common
neighbour in $\Omega_1$. Let $r'$ and $s'$ be the neighbours of $r$ and $s$, respectively, in $\Omega_1$. If $r'$ and
$s'$ are adjacent, then
the induced subgraph on $\{r',r,x,s,s'\}$ is a $5$-cycle, and so again here, $\Gammaone(G)$ is not perfect.

We may therefore assume that for any vertex $x$ in $\Omega_3$, and for any pair $u,v$ of neighbours in $\Omega_1$, there
exists at most one $r\in\Omega_2$
such that $r$ is joined to $x$ and to either of $u$ or $v$. It follows that the number of neighbours for $x$ in
$\Omega_2$ cannot be greater than $|\Omega_1|/2=(q-\epsilon)/4$. Now each
element of $\Omega_2$ has $(q-\epsilon-4)/2$ neighbours in $\Omega_3$, and so we have
\[
|\Omega_3| \ge \frac{2(q-\epsilon-4)}{q-\epsilon}|\Omega_2| = \frac{1}{2}(q-\epsilon-4)^2.
\]
Now clearly $|T|\ge 1+|\Omega_1|+|\Omega_2|+|\Omega_3|$; but asymptotically the left-hand side of this inequality is
$q^2/2$, whereas the right-hand side is $3q^2/4$. We may therefore bound $q$ above; specifically, we obtain the
inequality
\[
q^2-(18+8\epsilon)q +(39+18\epsilon) \le 0.
\]
When $\epsilon=-1$ this implies that $q\le 7$, and for $\epsilon=1$ that $q\le 17$.

It remains only to check the cases $q=13$ and $q=17$, which require separate treatment since
$\Gamma(\PSL_2(q))$ does not have a $5$-cycle as an induced subgraph in these cases.
A straightforward computation shows that in each group there exists an involution $t$, and an element $g$ of order
$(q+1)/2$, such that $[t,t^g]=1$. Now it is not hard to show that the induced subgraph on the conjugates $t,t^g,t^{g^2},\ldots$ is a cycle of order $(q+1)/2$.
\end{proof}

We see that if $q$ is even or at most $9$, then $\Gammaone(\PSL_2(q))$ is perfect.
For even $q$ this follows immediately from Lemma \ref{l: sl2}. For $\PSL_2(5)\cong \Alt_5$ and $\PSL_2(9)\cong \Alt_6$, see
Proposition \ref{p: alternating}, and for $\PSL_2(7)\cong \SL_3(2)$ see Lemma \ref{l: sl34}
below. (The commuting graph of the non-quasisimple group $\PSL_2(3)$ is easily seen to be perfect.)

The exceptional covers of $\PSL_2(9)$ have also been dealt with in Proposition \ref{p: alternating} above. We
thus have a complete classification of those quasisimple groups $G$ such that
$G/Z(G)\cong\PSL_2(q)$ and such that $\Gammaone(G)$ is perfect.

\begin{rem}\label{r: pgl}
It follows from Lemmas~\ref{l: sl2} and \ref{l: psl2 first} that the commuting graph of $\PGL_2(q)$ is not perfect
when $q$ is odd and $q>9$. We remark that $\Gammaone(\PGL_2(q))$ is not perfect when $q\in \{5,7,9\}$ either;
we omit the proof of this fact, which is straightforward to establish computationally.
\end{rem}

\subsection{Classical groups of dimension 3}

\begin{lem}\label{l: sl3} Let $G$ be isomorphic to $\SL_3(q)$ or $\PSL_3(q)$ with $q\neq 2,4$.
Then the commuting graph of $G$ is not perfect.
\end{lem}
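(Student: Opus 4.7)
The strategy is to split on the parity of $q$ and produce, in each case, either a subgroup of $\SL_3(q)$ whose commuting graph has already been shown to be non-perfect (so that Lemma~\ref{l: subgroup obstruction} concludes at once), or an explicit forbidden subgraph. Throughout I would pass freely between $\SL_3(q)$ and $\PSL_3(q)$: since $|Z(\SL_3(q))| = \gcd(3, q-1)$, Lemma~\ref{l: pass to qs} transfers forbidden subgraphs supported on elements of order coprime to $3$ (in particular on involutions) between the two groups.

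For odd $q$ with $q \geq 5$ I would invoke the classical embedding $\SO_3(q) \hookrightarrow \SL_3(q)$ (stabilizer of a nondegenerate symmetric bilinear form on $\F_q^3$), together with the isomorphism $\SO_3(q) \cong \PGL_2(q)$. A scalar $\lambda I \in Z(\SL_3(q))$ satisfies $\lambda^3 = 1$, and preserves the form only if $\lambda^2 = 1$, forcing $\lambda = 1$; so $\SO_3(q)$ meets the centre of $\SL_3(q)$ trivially, and the same copy of $\PGL_2(q)$ sits inside $\PSL_3(q)$ as well. By Lemma~\ref{l: psl2 first} (for $q > 9$) and Remark~\ref{r: pgl} (for $q \in \{5, 7, 9\}$), the graph $\Gamma(\PGL_2(q))$ is non-perfect. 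Lemma~\ref{l: subgroup obstruction} then handles both $\SL_3(q)$ and $\PSL_3(q)$ simultaneously.

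The residual cases are $q = 3$ and even $q$ with $q \geq 8$. For these the previous trick fails: $\SO_3(3) \cong \Sym_4$ has perfect commuting graph by Corollary~\ref{c: symmetric}, and in even characteristic $\SO_3(q) \cong \SL_2(q)$ is an AC-group and is therefore perfect. Here I would construct a forbidden subgraph directly. A natural candidate is to pick an involution $t$ (with eigenvalues $1, -1, -1$ when $q$ is odd, and a transvection $I + E_{12}$ when $q$ is even) together with a carefully chosen conjugating element $g$ of small order, and look for an induced $5$-cycle among the conjugates $t, t^g, t^{g^2}, t^{g^3}, t^{g^4}$. Centralizers of involutions are explicit---isomorphic to $\GL_2(q)$ in the odd-characteristic case, and of order $q^3(q-1)$ with known structure in the even-characteristic case---so the required commutation relations can be verified by direct matrix computation. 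Working with involutions has the added bonus that Lemma~\ref{l: pass to qs} transfers the conclusion between $\SL_3(q)$ and $\PSL_3(q)$ without further work (even when $3 \mid q-1$, as happens for instance when $q = 16$).

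I expect the principal obstacle to be these residual cases. Once the right five elements are identified, the existence of a $5$-cycle follows from Lemma~\ref{l: abelian centralizers} and the fact that involution centralizers are non-abelian; the delicate step is confirming that the cycle is \emph{induced}, which amounts to ruling out chords via a description of pairwise intersections of involution centralizers. For even $q \geq 8$ one could perform the chord-free check inside a Sylow $2$-subgroup (isomorphic to the upper unitriangular group), where the relations reduce to linear algebra over $\F_q$; for $q = 3$ a small computation inside a $2$-local subgroup of $\SL_3(3)$ should suffice.
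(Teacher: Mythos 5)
Your reduction for odd $q\ge 5$ is sound and genuinely different from the paper's argument: the embedding $\PGL_2(q)\cong\SO_3(q)\hookrightarrow\SL_3(q)$, meeting the centre trivially, does let you quote Lemma~\ref{l: psl2 first} (for $q>9$) and Remark~\ref{r: pgl} (for $q\in\{5,7,9\}$) together with Lemma~\ref{l: subgroup obstruction}. But the residual cases $q=3$ and even $q\ge 8$ are not proved; they are only a plan (``I would construct\dots'', ``once the right five elements are identified\dots''), and since you yourself identify them as the principal obstacle, the proposal as it stands does not establish the lemma.

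Worse, the specific construction you propose for even $q$ cannot succeed. In characteristic $2$ an involution $x\in\SL_3(q)$ satisfies $(x-I)^2=0$, so $x-I$ has rank $1$ and $x$ is a transvection; thus \emph{every} involution is a transvection, and any five conjugates of $t=I+E_{12}$ are five transvections. But two transvections of $\SL_3(q)$ commute if and only if they share their fixed hyperplane $H$ or their line $L$ (the dimension-$3$ argument in the proof of Lemma~\ref{l: sl34} is independent of $q$), and the edge-colouring argument given there shows that the induced subgraph on any set of transvections contains no odd cycle of length at least $5$ and no complement of one. So no forbidden subgraph is supported on involutions of $\SL_3(2^k)$ at all, and your ``induced $5$-cycle among conjugates of an involution'' does not exist; for even $q$ you must leave the $2$-elements entirely. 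The paper sidesteps all of this with a single uniform witness: four transvections $I+E_{32}$, $I+E_{12}$, $I+E_{13}$, $I+E_{23}$ (consecutive ones sharing alternately $H$ or $L$) closed up by the diagonal matrix $\mathrm{diag}(\alpha,\beta,\beta)$ with $\alpha\beta^2=1$ and $\alpha\ne\beta$, which is a semisimple element; the equation is solvable with $\alpha\ne\beta$ precisely when $q\ne 2,4$, and the $5$-cycle descends to $\PSL_3(q)$. If you want to salvage your approach you need an explicit forbidden subgraph for $q=3$ and for even $q\ge 8$ that uses non-involutions, at which point the paper's uniform construction is both shorter and covers all $q\ne 2,4$ at once.
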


The proof that follows shows, in addition, that the commuting graph of $\GL_3(q)$ is non-perfect for $q>2$.

\begin{proof}
Let $\alpha$ and $\beta$ be distinct non-zero elements of $\Fq$. Then the five matrices
\[
\left(\begin{array}{ccc}1&0&0\\0&1&0\\0&1&1\end{array}\right),
\left(\begin{array}{ccc}1&1&0\\0&1&0\\0&0&1\end{array}\right),
\left(\begin{array}{ccc}1&0&1\\0&1&0\\0&0&1\end{array}\right),
\]\[
\left(\begin{array}{ccc}1&0&0\\0&1&1\\0&0&1\end{array}\right),
\left(\begin{array}{ccc}\alpha&0&0\\0&\beta&0\\0&0&\beta\\\end{array}\right)
\]
constitute a $5$-cycle subgraph of the commuting graph of $\GL_3(q)$. For the last of these matrices to lie in $\SL_3(q)$
we
require that $\alpha\beta^2=1$; this equation is soluble (by distinct elements $\alpha,\beta$) unless $q$ is $2$ or $4$.

It remains only to observe that the images of these matrices in $\PSL_3(q)$ induce a $5$-cycle in $\Gammaone(\PSL_3(q))$.
\end{proof}

\begin{lem}\label{l: sl34}
Let $G$ be isomorphic to one of $\SL_3(2)$, $\SL_3(4)$ or $\PSL_3(4)$. Then $\Gammaone(G)$ is perfect.
\end{lem}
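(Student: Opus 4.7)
The plan is to reduce all three groups to showing perfectness of the commuting graph on involutions in $\PSL_3(q)$ for $q \in \{2, 4\}$, and then to exploit the flag geometry of the projective plane. Note first that $\SL_3(2) = \PSL_3(2)$, and that the centre of $\SL_3(4)$ has order $3$, coprime to the order of any involution; so by the remark following Lemma~\ref{l: pass to qs}, together with Lemma~\ref{l: abelian centralizers}, the graph $\Gammaone(\SL_3(4))$ is perfect if and only if $\Gammaone(\PSL_3(4))$ is. I would then classify centralisers in $\PSL_3(q)$ for $q \in \{2, 4\}$: semisimple non-central elements of $\SL_3(q)$ have three distinct eigenvalues over $\overline{\Fq}$ (any element of type $\mathrm{diag}(\alpha, \beta, \beta)$ would require $\alpha \beta^2 = 1$, which in $\Fq^\times$ of order at most $3$ forces $\alpha = \beta$, making the element central), so their centralisers sit inside maximal tori and are abelian; unipotent elements with a single Jordan block of size $3$ have centraliser isomorphic to the commutative ring $\Fq[N]/(N^3)$. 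Thus only transvections (involutions) have non-abelian centralisers, and by Lemma~\ref{l: abelian centralizers} every forbidden subgraph of $\Gammaone(G)$ consists entirely of involutions.

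The involutions of $\PSL_3(q)$ are in bijection with the flags $(P, \ell)$ of $\mathrm{PG}(2, q)$, where $P$ is the centre and $\ell$ is the axis of the transvection. The standard commutator relations imply that two distinct transvections commute if and only if $P_1 = P_2$ or $\ell_1 = \ell_2$: the remaining candidate, $c_1 \in H_2$ and $c_2 \in H_1$ with $c_1 \neq c_2$, forces the unique line through $c_1$ and $c_2$ to coincide with both $H_1$ and $H_2$, so the axes agree anyway. Colour each edge of the involution commuting graph as \emph{point} or \emph{line} according to the shared object; the colour is well-defined because two distinct flags cannot share both. The key observation is that every triangle in this coloured graph is \emph{monochromatic}: a triangle whose edges used both colours would quickly force two of its vertices to be equal as flags.

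These two ingredients dispatch both families of forbidden subgraph. For an induced odd hole $C_{2k+1}$ with $k \geq 2$, two consecutive edges of the same colour would force two distance-$2$ vertices to share a point or a line, hence be adjacent, which is impossible in an induced cycle; so the edges would admit a proper $2$-edge-colouring, which no odd cycle allows. For an induced odd antihole $\bar C_{2k+1}$ with $k \geq 3$, the four vertices $v_1, v_3, v_5, v_6$ are pairwise adjacent except for $\{v_5, v_6\}$, and the triangles $v_1 v_3 v_5$ and $v_1 v_3 v_6$ share the edge $v_1 v_3$; monochromaticity then forces all four vertices to share a common point or common line, making $v_5$ and $v_6$ adjacent and contradicting the antihole structure. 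Since $\bar C_5 \cong C_5$, the case $k = 2$ is dispatched by the first argument. The main obstacle is likely to be the scrupulous centraliser analysis in $\PSL_3(4)$ — ruling out non-abelian centralisers of semisimple elements depends on the small-field arithmetic of $\F_4^\times$ collapsing the potentially-repeated-eigenvalue cases into the centre — after which the colouring-plus-triangle argument makes perfectness transparent.
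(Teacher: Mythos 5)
Your overall strategy is the same as the paper's: reduce to the induced subgraph on involutions (the paper reduces to involutory transvections of $\SL_3(4)$, which amounts to the same thing via Lemma~\ref{l: pass to qs}), identify commutation of transvections with sharing a centre or an axis in the projective plane, two-colour the edges accordingly, and kill odd holes by parity and odd antiholes by a clique/triangle argument. Your antihole argument via the monochromatic triangles on $v_1,v_3,v_5,v_6$ is in fact a little slicker than the paper's counting of triangles through a fixed edge, and your reduction of $\SL_3(4)$ to $\PSL_3(4)$ via the coprimality remark after Lemma~\ref{l: pass to qs} is a legitimate alternative to the paper's direct appeal to the three classes of transvections.

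There is, however, one genuine gap: the claim that the involutions of $\PSL_3(q)$ are \emph{in bijection} with the flags $(P,\ell)$ is false for $q=4$. For a fixed flag the transvections $v\mapsto v+\lambda\phi(v)c$ with $\lambda\in\F_4^\times$ give three distinct involutions with the same centre and the same axis (this is why the paper records $105$ triangles, three transvections per incident point--line pair). Consequently your edge-colouring is not well defined --- an edge joining two same-flag involutions is simultaneously a ``point'' edge and a ``line'' edge --- and, more seriously, your key lemma that every triangle is monochromatic fails: taking $t_1,t_2$ with the same flag $(P,\ell)$, $t_3$ with flag $(P,\ell')$ and $t_4$ with flag $(P',\ell)$ produces exactly the $K_4$-minus-an-edge configuration your antihole argument relies on, with no common point or line among all four vertices. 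The repair is the step the paper inserts before colouring: if two vertices $t,u$ of the candidate subgraph $S$ satisfy $H(t)=H(u)$ and $L(t)=L(u)$, then every other vertex of $S$ is adjacent to $t$ if and only if it is adjacent to $u$, which is impossible in a hole or antihole of length at least $5$; one may therefore assume no two vertices of $S$ share a flag, after which your colouring, the monochromaticity of triangles, and both of your concluding arguments go through. (Your argument is already complete as written for $\SL_3(2)=\PSL_3(2)$, where $|\F_2^\times|=1$ and the bijection with flags does hold.)
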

\begin{proof}
It suffices to prove the lemma for $\SL_3(4)$ and $\PSL_3(4)$, since $\SL_3(2)$ is contained in each as a subgroup.

The only non-central elements of $\SL_3(4)$ whose centralizers are non-abelian are the transvections. These have order
$2$ or $6$, and fall in three conjugacy classes. These classes merge into one class of involutions in $\PSL_3(4)$, and
this class contains all of the non-trivial elements
of $\PSL_3(4)$ with non-abelian centralizers. From these facts, it easily follows that $\Gammaone(\SL_3(4))$ and
$\Gammaone(\PSL_3(4))$ are perfect if and only if the induced subgraph $\Gamma(T)$, common to both, on the set $T$ of
involutory transvections, is perfect.

Let $V$ be the natural module for $\SL_3(4)$. For a transvection $t$, we write $H(t)$ for the hyperplane of $V$ fixed by
$t$. We write $L(t)$ for the image of $t-I$ (equivalently, the unique
$1$-dimensional $t$-invariant subspace $\langle v\rangle$ of $V$ such that $t$ acts trivially on $V/\langle v\rangle$).
It is easy to show that elements $t$ and $u$ of $T$ commute if and only if either $H(t)=H(u)$ or $L(t)=L(u)$.

Let $S$ be a subset of $T$ of size at least $5$. Suppose that $S$ contains distinct vertices $t$ and $u$ such that
$H(t)=H(u)$ and $L(t)=L(u)$. Then for each $v\in S$ distinct from $t$ and $u$, we see that $v$ is adjacent to $t$ if
and only if it is adjacent to $u$. It follows that the induced subgraph of
$\Gammaone(\SL_3(4))$ on $S$ cannot be a cycle or its complement. We shall therefore suppose that $S$ contains no such
elements $t$ and $u$. Now we may colour each edge $(t,u)$ of the induced subgraph of $\Gamma(T)$ on $S$ with colours $H$
and $L$, depending on whether $H(t)=H(u)$ or $L(t)=L(u)$.

Suppose that the induced subgraph on $S$ is a cycle $(t_1,\dots,t_k)$. For $t_i\in S$, we see that the colour of
$(t_i,t_{i-1})$ is not the colour of $(t_i,t_{i+1})$, or else  $t_{i-1}$ and $t_{i+1}$ would commute. It follows
immediately that the cycle is $2$-colourable, and hence that it has even length.

Suppose on the other hand that the induced subgraph on $S$ is the complement of a $k$-cycle. We may assume that $k>5$, since
the complement of a $5$-cycle is another $5$-cycle. Let $t_1,t_2$ be two connected vertices in $S$ -- so either $H(t_1)=H(t_2)$ or else $L(t_1)=L(t_2)$. Now $\{t_1,t_2\}$ is a subset of $k-5$ distinct triangles in $\Gamma(S)$ and so at least $k-3$ vertices in $\Gamma(S)$ have the same colour. But this implies that $S$ contains a complete subgraph on $k-3$ vertices, which is impossible, since $k\ge 7$ by assumption, and the complement of a $k$-cycle has no clique of size greater than $k/2$.
\end{proof}

By reference to \cite[Table 5.1.D]{kl} we see that the results given above attend to almost all
quasisimple covers of the groups
$\PSL_3(q)$. When $q=2$ the group $\PSL_3(2)$ has an exceptional cover isomorphic to
$\SL_2(7)$; however this has been dealt with in the previous section and so can be excluded here.
The remaining exceptions occur when $q=4$, and the next lemma deals with this situation.

\begin{lem}\label{l: l34}
Suppose that $G$ is a quasisimple group such that $G/Z(G)\cong \PSL_3(4)$. Then $\Gammaone(G)$ is perfect if and only
if $G$ is one of the groups in the following list.
\[
\PSL_3(4),\ 2\cprod \SL_3(4),\ 3\cprod \PSL_3(4),\ (2\times 2)\cprod \PSL_3(4),\ 6\cprod \PSL_3(4),\
\]
\[
(6\times 2)\cprod \PSL_3(4),\
(4\times 4)\cprod \PSL_3(4),\ (12\times 4) \cprod \PSL_3(4).
\]
\end{lem}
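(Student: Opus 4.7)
The plan is to reduce, as in the proof of Lemma~\ref{l: sl34}, to analysis of the induced subgraph on lifts of transvections. By Lemma~\ref{l: abelian centralizers}, any vertex of a forbidden subgraph in $\Gammaone(G)$ has nonabelian centralizer in $G$. In $\PSL_3(4)$ itself the nonabelian centralizers correspond precisely to transvections (see the proof of Lemma~\ref{l: sl34}), and this property lifts to any quasisimple cover: every vertex of a forbidden subgraph of $\Gammaone(G)$ must lie in the preimage $\tilde T$ of the transvection class $T\subset \PSL_3(4)$. Lemma~\ref{l: central adjustment} then forbids two vertices of such a subgraph from sharing a $Z(G)$-coset.

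For each cover $G$ in the stated list, the plan is to verify perfection. For $\PSL_3(4)$ and $3\cprod\PSL_3(4)$, where $|Z(G)|$ is coprime to $2$, the lifts of each transvection in $\tilde T$ are pairwise nonconjugate (they have distinct orders), so Lemma~\ref{l: pass to qs} combined with Lemma~\ref{l: sl34} yields perfection directly. For the other six good covers, the plan is to extend the two-colouring argument of Lemma~\ref{l: sl34}: for commuting lifts $\tilde t,\tilde u\in \tilde T$ lying in distinct $Z(G)$-cosets, we colour the edge $(\tilde t,\tilde u)$ by $H$ or $L$ according to whether $H(t)=H(u)$ or $L(t)=L(u)$ in the projection, and verify that this yields either a valid $2$-colouring of any induced cycle or a large monochromatic clique in any cycle-complement, ruling out forbidden subgraphs.

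For the four covers not in the list, namely those with centre $C_4$, $C_{12}$, $C_4\times C_2$, or $C_{12}\times C_2$, we exhibit a forbidden subgraph. The crucial case is a $4\cprod \PSL_3(4)$: here transvections lift to elements of order $4$, and the central twisting of the commutator relations by an order-$4$ element of $Z(G)$ (one not a square in $Z(G)$) breaks the two-colouring used above. One finds five pairwise non-$Z$-equivalent lifts whose projections pairwise commute in $\PSL_3(4)$ but which form an induced $5$-cycle in $\Gammaone(G)$. Non-perfection of the remaining three bad covers then follows via Lemma~\ref{l: pass to qs} applied to the central quotients onto $4\cprod \PSL_3(4)$ or $12\cprod \PSL_3(4)$.

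The main obstacle is the volume of case work, especially for the universal cover $(12\times 4)\cprod\PSL_3(4)$, whose centre has order $48$: one must verify that no choice of coset representatives in $\tilde T$ yields a forbidden subgraph, a claim made subtle by the fact that intermediate covers such as $12\cprod\PSL_3(4)$ and $(12\times 2)\cprod\PSL_3(4)$ are non-perfect. This final verification is most naturally carried out computationally in GAP or Magma, in line with the authors' stated methodology.
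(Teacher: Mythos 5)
Your overall strategy --- reduce to the lifts of the involution class $T$ and analyse when commuting in $\PSL_3(4)$ survives to commuting in the cover --- is the same as the paper's, but two of your steps have genuine gaps. First, the negative direction is asserted rather than proved: the sentence ``one finds five pairwise non-$Z$-equivalent lifts \dots which form an induced $5$-cycle'' is the entire content of that direction, and the mechanism you offer for it is wrong. For commuting involutions $s,t\in T$ the commutator $[s_M,t_M]$ of lifts to the full cover $M=(12\times 4)\cprod\PSL_3(4)$ has order at most $2$, so all such commutators lie in the unique subgroup $V\cong C_2\times C_2$ of $Z(M)$; no ``order-$4$ central twisting'' occurs. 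What actually kills $4\cprod\PSL_3(4)$ is that a cyclic centre of order $4$ cannot contain $V$, so the kernel $Z_0$ of $M\to G$ meets $V$ in a subgroup of order exactly $2$: exactly one of the three nontrivial commutator values becomes trivial in $G$ while the other two do not. The paper converts this into an explicit $5$-cycle by viewing the $15$ transvections with a common axis (or centre), together with the identity, as a $2$-dimensional symplectic space over $\F_4$ with the commutator as the alternating form, and taking the vertices $s$, $\alpha s$, $\alpha^{-1}s+\alpha^{-1}t$, $\alpha t$, $t$ for a hyperbolic pair $(s,t)$. Some such construction (or a machine computation) is indispensable; without it you have not shown that any cover is bad. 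Your reduction of the remaining three bad covers to this one via Lemma~\ref{l: pass to qs} also needs the pairwise non-conjugacy hypothesis verified, which you do not do; the paper sidesteps this by making the single construction work uniformly for every $G$ in which the image $V_G$ of $V$ has order $2$.

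Second, your plan for the positive cases $(4\times 4)\cprod\PSL_3(4)$ and $(12\times 4)\cprod\PSL_3(4)$ does not work as stated. The two-colouring argument of Lemma~\ref{l: sl34} hinges on the step ``if the edges $(t_{i-1},t_i)$ and $(t_i,t_{i+1})$ had the same colour then $t_{i-1}$ and $t_{i+1}$ would commute'', and in a cover this only yields commuting in $\PSL_3(4)$, not in $G$, so it does not contradict the cycle being induced in $\Gamma(T_G)$. In fact for these two covers all of $V$ survives into $Z(G)$, so lifts commute only when their commutator in $M$ is trivial, and $\Gamma(T_G)$ degenerates to $105$ pairwise disconnected triangles --- perfect for a much more elementary reason. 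The organising principle you are missing, and which makes the twelve possible centres (and the several subgroups $Z_0$ realising each) tractable without the heavy computation you anticipate, is that $\Gamma(T_G)$ depends only on $V_G$: it is isomorphic to $\Gamma(T)$ when $V_G$ is trivial, is a disjoint union of triangles when $V_G\cong C_2\times C_2$, and contains an induced $5$-cycle when $V_G\cong C_2$. With that trichotomy the lemma reduces to listing the central quotients of $M$ with $V\cap Z_0\in\{1,V\}$.
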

\begin{proof}
By reference to \cite[Table 5.1.D]{kl}, we observe that the Schur multiplier of $\PSL_3(4)$ is
$C_{12}\times C_4$. The extension $3\cprod\PSL_3(4)$ is isomorphic to $\SL_3(4)$. The elements of $\PSL_3(4)$
have orders from the set $\{1,2,3,4,5,7\}$; there is a unique conjugacy class $T$ of involutions.

Let $G$ be a quasisimple extension of $\PSL_3(4)$. If $g$ is an element of $G$ whose image in $\PSL_3(4)$ has order
$3$,$4$,$5$ or $7$, then the centralizer of $g$ in $G$ is abelian. For each involution $t\in\PSL_3(4)$, let
$t_G$ be an element of $G$ which projects onto $t$, and let
$T_G=\{t_G\mid t \in T\}$. Then Lemma~\ref{l: central adjustment} implies that $\Gammaone(G)$ is perfect if and only if its induced subgraph
$\Gamma(T_G)$ on $T_G$ is perfect.

Since the graphs $\Gamma(T_G)$, for the various quasisimple extensions $G$, all have vertex sets in
natural bijection to one another, we can represent them all using a single ornamented graph.
Let $M=(12\times 4)\cprod \PSL_3(4)$ be the full covering group.
Let $\Gamma(T)$ be the commuting graph on $T$. We endow $\Gamma(T)$ with an edge-labelling, where the label of the
edge $(s,t)$ is determined by the commutator $[s_M,t_M]$, an element of $Z(M)$. In fact only four labels are needed,
since if $s$ and $t$ are commuting elements of $\PSL_3(4)$ then $[s_M,t_M]$ has order at most $2$ \cite{GAP}, and hence lies in the
unique subgroup $V$ of $Z(M)$ isomorphic to $C_2\times C_2$.

Each quasisimple group $G$ is a central quotient of $M$, and it is clear that the commuting graph $\Gamma(T_G)$ is
determined by the image $V_G$ of $V$ in this quotient. If $V_G$ is trivial, then $\Gamma(T_G)$ is
isomorphic to $\Gamma(T)$, which by Lemma \ref{l: sl34} is perfect. If $V_G\cong C_2\times C_2$ then $T_G\cong T_M$,
which consists of $105$ connected components, each isomorphic to the triangle graph $K_3$; so clearly $\Gamma(T_G)$ is
perfect in this case also.

For the remaining cases, recall that the elements of $T$ correspond to transvections in $\SL_3(4)$, and
that each transvection has associated with it a hyperplane $H(t)$ of fixed points, and a line $L(t)$, which is the
image of $I-t$.
Transvections $s$ and $t$ commute if and only if $H(s)=H(t)$ or $L(s)=L(t)$. For any hyperplane $H$ in $\F_4^3$
there are
$15$ transvections $t$ such that $H(t)=H$, and for each line $L$ there are $15$ such that $L(t)=L$. Furthermore, for
each pair $(H,L)$ such that $L< H$, there are three transvections $t$ such that $H(t)=H$ and $L(t)=L$ (which yield the
triangles in $\Gamma(T_M)$ described above). There
are $21$ lines and $21$ hyperplanes in $\F_4^3$, and so $\Gamma(T)$ may be expressed as a union of
$42$ copies of the complete graph $K_{15}$.

The vertices of each of these copies of $K_{15}$ generate an elementary abelian group $A$ of order $16$. The group $A$
is naturally endowed with the structure of a $2$-dimensional vector
space over the field $\F_4$, scalar multiplication
being given by the rule $(\lambda,t)\mapsto I+\lambda(t-I)$ for $\lambda\in \F_4$.

We are now in a position to deal with groups $G$ such that $V_G\cong C_2$. Let $v\in V$ be the non-identity element of
the kernel of the map $V\to V_G$. We associate the group $V$ with the additive group $\F_4$,
with $I$ as $0$ and with $v$ as $1$. It is not hard to show that the map
$(s,t) \mapsto [s_M,t_M]$ defines a non-degenerate alternating form on $A$. Let $s$ and $t$ be a hyperbolic pair
with respect to this form; so $(s,t)=v=1$. Let $\alpha$ be a primitive element of $\F_4$, and
consider the induced subgraph of $\Gamma(T)$ on the vertices
\[
s,\ \alpha s,\ \alpha^{-1}s+\alpha^{-1}t,\ \alpha t,\ t.
\]
We see that in the order listed above, edges between consecutive vertices receive labels $0$ or $1$, whereas
other edges receive labels $\alpha$ or $\alpha^{-1}$. It  follows that these vertices induce a $5$-cycle in $\Gamma(T_G)$,
and so $\Gammaone(G)$ is not perfect.

Thus the commuting graph of $G$ is perfect if and only if $G\cong M/Z_0$ where $Z_0\leq Z(M)$ and either $V\leq Z_0$ or $V\cap Z_0=\{1\}$. It is an easy matter to ascertain which groups $Z_0\leq Z(M)$ satisfy this condition and one obtains quotients as listed. Note that for some of these quotients, $2.\PSL_3(4)$ for instance, there is more than one choice for the subgroup $Z_0$; in such cases we can appeal to \cite[Theorem 6.3.2]{gls3} to see that they are all isomorphic.
\end{proof}

\begin{lem}\label{l: unitary3}
Let $G$ be a quotient of $\SU_3(q)$ by a central subgroup, where $q>2$. Then $\Gammaone(G)$ is not perfect.
\end{lem}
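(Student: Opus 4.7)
The plan is to exhibit a forbidden subgraph (typically an induced $5$-cycle) in $\Gammaone(\SU_3(q))$ for every $q > 2$, and then to transfer it to the central quotient $G$ via Lemma~\ref{l: pass to qs}. Since $|Z(\SU_3(q))| = \gcd(3, q+1)$, which divides $3$, the hypothesis of Lemma~\ref{l: pass to qs} on non-conjugate preimages will be satisfied provided the chosen vertices have orders coprime to $3$ --- for instance, transvections have order $p$, which is coprime to $3$ whenever $p \neq 3$, and when $p = 3$ the centre of $\SU_3(q)$ is trivial and the condition is vacuous.

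I would work with the Hermitian form $J = \mathrm{antidiag}(1,1,1)$, exploiting the rank-one structure of $\SU_3(q)$: the two opposite long root subgroups $Z(U) = \{I + b E_{13} : b + b^q = 0\}$ and $Z(U^-)$ of unitary transvections (each abelian of order $q$), together with the diagonal torus $T = \{\mathrm{diag}(\alpha, \alpha^{q-1}, \alpha^{-q})\}$ of order $q^2 - 1$ and its norm-one subtorus $T_1 = \{d_\alpha : \alpha^{q+1} = 1\}$ of order $q+1$. A direct matrix computation shows that two unitary transvections with different axes never commute, while $T_1$ centralises both $Z(U)$ and $Z(U^-)$; the hypothesis $q > 2$ enters through $|T_1| = q+1 \geq 4$, supplying enough torus elements to control the commutation pattern. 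The $5$-cycle I would construct has the schematic form $t \sim s \sim t' \sim u \sim w \sim t$, with $t \in Z(U)$, $t' \in Z(U^-)$, $s \in T_1$ non-trivial, and $u, w$ drawn from complementary Borel subgroups in such a way that the required adjacencies and non-adjacencies close up. The arithmetic conditions guaranteeing that no chord appears (conditions typically of the form $\alpha^{q+1}=1$ together with $\alpha^{q-2}\neq 1$) admit a simultaneous solution in $T_1 \setminus \{1\}$ precisely when $q > 2$.

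The main obstacle is the combinatorial verification that exactly the required five pairs commute, with no chord creeping in from coincidences in the centraliser structure; this involves computing all ten pairwise commutators in the explicit matrix representation and may split into subcases according to the congruence class of $q$ modulo $3$, or to whether $T_1$ happens to be a $3$-group (for example $q=8$). As a fall-back, the stabiliser in $\SU_3(q)$ of a non-isotropic line is a copy of $\GU_2(q)$, whose image in $\PSU_3(q)$ is a central extension of $\PGL_2(q)$; for odd $q \geq 5$ the non-perfectness of $\Gammaone(\PGL_2(q))$ recorded in Remark~\ref{r: pgl} then transfers via Lemma~\ref{l: pass to qs}, leaving only a small finite number of cases ($q=3$ and small even $q$) for direct computation in the spirit of the other exceptional cases in this section.
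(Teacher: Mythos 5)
Your overall strategy---build an induced $5$-cycle inside $\SU_3(q)$ from the geometry of the Hermitian form and then descend to the central quotient---is the right one, but the construction at the heart of it is never carried out, and in its most natural reading it cannot work. Every element of $U\setminus Z(U)$ (and of $U^-\setminus Z(U^-)$) is a regular unipotent element with a unique fixed $1$-space, which is isotropic; its centralizer therefore lies in the Borel subgroup stabilizing that point. Since the two opposite Borels meet only in the torus, no non-central element of $U$ commutes with any non-central element of $U^-$, so the edge $u\sim w$ in your schematic $t\sim s\sim t'\sim u\sim w\sim t$ cannot be realized by unipotent elements of the two unipotent radicals, and you give no concrete alternative. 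The obstruction is real: commuting transvections must share an axis, and two distinct isotropic points are never perpendicular, so one is forced to bring in elements attached to \emph{non-isotropic} points. That is exactly what the paper does: for each line $L$ of $V$ it chooses one element acting as a scalar on $L^{\perp}$ (a transvection when $L$ is singular, a dilatation-type element when $L$ is non-singular), proves that two such elements commute if and only if their axes are perpendicular, and then exhibits a pentagon of lines $\langle v_1\rangle,\langle v_2\rangle,\langle v_1+v_2\rangle,\langle v_1-v_3\rangle,\langle v_3\rangle$ in the perpendicularity graph. The descent to the quotient is then a direct check that commuting is preserved and reflected on this particular vertex set, rather than an appeal to Lemma~\ref{l: pass to qs} (whose non-conjugacy hypothesis you verify only for the transvections, not for the torus element $s$, whose order may well be divisible by $3=|Z(\SU_3(q))|$).

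The fall-back argument also has a genuine hole. For even $q$ the stabilizer of a non-isotropic point, $\GU_2(q)$, embeds in $\GL_2(q^2)$ and so has a \emph{perfect} commuting graph by Lemma~\ref{l: sl2} (indeed $\PGU_2(q)\cong\PGL_2(q)\cong\SL_2(q)$ is an AC-group in characteristic $2$); no forbidden subgraph can be found inside that subgroup, so what you describe as ``a small finite number of cases'' is in fact the entire infinite family of even $q$. Even for odd $q$, transferring a forbidden subgraph of $\PGL_2(q)$ upward to $\GU_2(q)$ via Lemma~\ref{l: pass to qs} requires the pairwise non-conjugacy of preimages of the specific vertices of that forbidden subgraph, which is not addressed, and Remark~\ref{r: pgl} itself rests on an omitted computation for $q\in\{5,7,9\}$. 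As it stands the proposal is a plausible plan with the decisive verifications missing and one of its two routes provably closed.
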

\begin{proof}
For details about the dilatation and transvection mappings used in this argument, we refer the reader to
\cite[Chapter 2]{Dieu}.

Suppose first that $G=\SU_3(q)$. Let $V$ be the natural module for $G$, and let $F$ be the underlying Hermitian form on $V$.
For any $1$-dimensional subspace $L$ of $V$, there is a non-central element $X$ of $G$ such that $L$ is $X$-invariant,
and such that $X$ acts as a scalar on $L^{\perp}$.

If $L$ is non-singular with respect to $F$, then $X$ is a scalar multiple of a
dilatation in $\GU_3(q)$, with axis $L$ and centre $L^\perp$. The transformation $X$ may be chosen to have
order $2$ if $q$ is odd, or order $q+1$ if $q$ is even.
The centralizer of $X$ in $G$ is equal to the stabilizer of $L$.

On the other hand if $L$ is singular with respect to $F$ then $X$ is a scalar multiple of a transvection, again with
axis $L$ and centre $L^\perp$.
In this case $X$ may be chosen to have order $p$, where $p$ is the characteristic. In this case the centralizer of $X$
in $G$ is a proper subgroup of the stabilizer of $L$, of index $q^2-1$.

Let $\Omega$ be a set of transformations $X$ of the types described above, one for each line in $V$. We write $L(X)$ for the axis of $X$.
Suppose that $X,Y \in \Omega$ be distinct elements which commute. Then $Y$ stabilizes $L(X)$, and
since $L(X)\ne L(Y)$, it is easy to see that $L(X) \in L(Y)^\perp$. Conversely, suppose that $L(X)\in L(Y)^\perp$;
then $L(X)$ and $L(Y)$ cannot both be singular, since $V$ has no totally singular $2$-dimensional subspace. We may
suppose without loss of generality that $L(X)$ is non-singular; now since
$Y$ acts as a scalar on $L(Y)^\perp$ we see that $Y$ is in the stabilizer of $L(X)$, which is equal to the centralizer
of $X$.

Let $\Delta_F$ be the graph whose vertices are $1$-dimensional subspaces of $V$, with edges connecting lines which are
perpendicular with respect to $F$. Then we have shown that $\Delta_F$ is isomorphic to the subgraph of $\Gammaone(G)$
induced on the vertices $\Omega$. Let $(v_1,v_2,v_3)$ be a basis
for $V$; we may take $F$ to be the hermitian form given by
\[
F(v_i,v_j)=\left\{\begin{array}{ll} 1 & \textrm{if $(i,j) = (1,1)$, $(2,3)$ or $(3,2)$,}\\ 0& \textrm{otherwise.}\end{array}\right.
\]
Now it is clear that the set of lines containing the points
\[
v_1,\ v_2,\ v_1+v_2,\ v_1-v_3,\ v_3
\]
induces a $5$-cycle in $\Delta_F$ and we are done.

Now suppose that $G=\SU_3(q)/A$, where $A$ is a central subgroup of $\SU_3(q)$.
Let $X$ and $Y$ be in $\Omega$. It is straightforward to check that
the images of $X$ and $Y$ in $\PSU_3(q)$ commute if
and only if $X$ and $Y$ commute.
It follows immediately that $\Gammaone(G)$ is not perfect.
\end{proof}

\subsection{Classical groups of dimension at least \texorpdfstring{$4$}{4}}

We start with a general result for all classical groups of large enough dimension over almost all fields.

\begin{lem}\label{l: higher rank}
Let $q$ be a prime power with $q\neq 2,4$. Let $G$ be a quasisimple classical group with $G/Z(G)$ isomorphic to
$\PSL_n(q)$ or $\PSU_n(q)$ with $n\ge 4$, or to $\PSp_{2m}(q)$, $\POmega^\pm_{2m}(q)$ or $\POmega_{2m+1}(q)$
with $m\ge 3$. Then $\Gammaone(G)$ is not perfect.
\end{lem}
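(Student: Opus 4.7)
The strategy is to apply Lemma~\ref{l: subgroup obstruction} by exhibiting, inside each quasisimple group $G$ satisfying the hypotheses, a subgroup whose commuting graph is already known to be non-perfect. Two tools are available: Lemma~\ref{l: sl3}, which covers $\SL_3(q)$ and $\PSL_3(q)$ when $q \neq 2, 4$, and Lemma~\ref{l: unitary3}, which covers any central quotient of $\SU_3(q)$ when $q > 2$. The hypothesis $q \neq 2, 4$ in the present statement makes both lemmas available.

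The natural module and its Witt decomposition supply the required embeddings. For a quasisimple cover of $\PSL_n(q)$ with $n \geq 4$, the block-diagonal map $A \mapsto \mathrm{diag}(A, I_{n-3})$ places $\SL_3(q)$ inside $\SL_n(q)$, and hence (after intersecting with $Z(G)$) inside $G$. For a cover of $\PSU_n(q)$ with $n \geq 4$, the stabilizer of a non-degenerate Hermitian $3$-subspace of the natural module contains $\SU_3(q)$. For a cover of $\PSp_{2m}(q)$ with $m \geq 3$, the Siegel parabolic has Levi factor $\GL_m(q) \supseteq \SL_3(q)$. Covers of $\POmega_{2m+1}(q)$ with $m \geq 3$ and $\POmega^+_{2m}(q)$ with $m \geq 4$ admit an analogous Levi $\GL_m(q) \supseteq \SL_3(q)$. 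Finally, covers of $\POmega^-_{2m}(q)$ with $m \geq 4$ have Witt index $m - 1 \geq 3$, and the corresponding Levi contains $\GL_{m-1}(q) \supseteq \SL_3(q)$.

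Once the embedding is fixed, the explicit $5$-cycle exhibited in the proof of Lemma~\ref{l: sl3} (respectively Lemma~\ref{l: unitary3}) realises as a set of block matrices in $G$; pairwise commutation is determined entirely by the $3 \times 3$ blocks, so the induced subgraph of $\Gammaone(G)$ on these five vertices remains a $5$-cycle. For the few exceptional quasisimple covers occurring at small $q$ (as listed in \cite[Table~5.1.D]{kl}), a final application of Lemma~\ref{l: pass to qs}, or a direct verification in the natural matrix representation, confirms that the $5$-cycle is preserved under the central quotient.

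The main obstacle is really bookkeeping: one must identify the correct rank thresholds for each classical family and check that the chosen $3$-dimensional block actually embeds into $G$, rather than only into its projective or universal version. The rank hypotheses $n \geq 4$, $m \geq 3$, and $m \geq 4$ in the statement are precisely what is needed to make such a $3$-dimensional block available in every case.
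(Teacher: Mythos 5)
Your proposal is correct and follows essentially the same route as the paper: in the non-unitary cases one locates a copy of $\SL_3(q)$ or $\PSL_3(q)$ inside a Levi complement of a parabolic subgroup, in the unitary case one uses the stabilizer of a non-degenerate $3$-dimensional subspace to find a central quotient of $\SU_3(q)$, and then Lemmas~\ref{l: sl3}, \ref{l: unitary3} and \ref{l: subgroup obstruction} finish the argument. The only difference is that you spell out the specific Levi factors family by family, which the paper leaves as ``a standard result.''
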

\begin{proof}
If $G/Z(G)\not\cong \PSU_n(q)$, then it is a standard result that $G$ contains a parabolic subgroup $P$ for which a
Levi complement $L$ contains a normal subgroup isomorphic to either $\PSL_3(q)$ or $\SL_3(q)$. Next suppose that $G/Z(G)\cong \PSU_n(q)$. The group $\SU_n(q)$ contains a subgroup $H_0$ that stabilizes a non-degenerate subspace of dimension $3$; now let $H_1$ be the lift of $H_0$ in the universal version of $\PSU_n(q)$ (in all cases except $(n,q)=(4,3)$, this universal version is just $\SU_n(q)$ itself and so $H_1=H_0$) and let $H$ be the projective image of $H_1$ in $G$. Then $H$ contains a normal
subgroup isomorphic to either $\PSU_3(q)$ or $\SU_3(q)$ (see, for instance, \cite[\S\S4.1 and 4.2]{kl}).

The result now follows from Lemmas~\ref{l: sl3} and \ref{l: unitary3}.
\end{proof}

We now work through the families of classical groups one by one; the force of Lemma \ref{l: higher rank} is that
we have only to deal with the case that $q$ is $2$ or $4$, and with groups $G$ such that $G/Z(G)\cong \PSp_4(q)$.

\begin{prop}\label{p: symp}
Let $G$ be a quasisimple group with $G/Z(G)$ isomorphic to $\PSp_{2m}(q)$, with $m\geq 2$.
Then $\Gammaone(G)$ is not perfect.
\end{prop}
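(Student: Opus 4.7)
The plan is to reduce the problem to the case of $\Sp_4(q)$ and then to exhibit an explicit induced $5$-cycle built from five symplectic transvections. For the reduction, I would observe that $\Sp_{2m}(q)$ contains a copy of $\Sp_4(q)$, namely the stabiliser of the decomposition $V = V_0 \perp V_0^{\perp}$ in which $V_0$ is a non-degenerate $4$-dimensional subspace and the action on $V_0^{\perp}$ is trivial. For $m \geq 3$ the non-trivial central element of this $\Sp_4(q)$ is distinct from $-I_{2m}$, so the subgroup survives every central quotient and embeds in $G$; for $m = 2$, the group $G$ is $\Sp_4(q)$ itself (when $q$ is even) or one of $\Sp_4(q), \PSp_4(q)$ (when $q$ is odd), since $\PSp_4(q)$ admits no exceptional quasisimple covers. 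By Lemma~\ref{l: subgroup obstruction} it therefore suffices to show that $\Gammaone(\Sp_4(q))$ is not perfect, together with $\Gammaone(\PSp_4(q))$ when $q$ is odd.

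For the construction, I would use the symplectic transvections $t_u(\lambda)\colon v \mapsto v + \lambda (v, u) u$. A short direct computation shows that, for non-zero $\lambda, \lambda'$, the elements $t_u(\lambda)$ and $t_{u'}(\lambda')$ commute in $\Sp_4(q)$ if and only if $(u, u') = 0$. Fixing a symplectic basis $e_1, e_2, f_1, f_2$ with $(e_i, f_j) = \delta_{ij}$ and all other pairings zero, I would take the five direction vectors
\[
u_1 = e_1, \quad u_2 = e_2, \quad u_3 = e_2 + f_1, \quad u_4 = e_1 + f_1 + f_2, \quad u_5 = f_2.
\]
A routine check yields $(u_i, u_{i+1}) = 0$ for every $i$ (indices modulo $5$), while each of the five non-consecutive pairings equals $1$. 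The induced subgraph of $\Gammaone(\Sp_4(q))$ on $\{t_{u_1}(1), \ldots, t_{u_5}(1)\}$ is therefore a $5$-cycle, and this holds in every characteristic.

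Finally, to pass from $\Sp_4(q)$ to $\PSp_4(q)$ when $q$ is odd, I would appeal to Lemma~\ref{l: pass to qs}. Each $t_{u_i}(1)$ has order equal to $\mathrm{char}(\Fq)$, which is odd and hence coprime to $|Z(\Sp_4(q))| = 2$; by the remark following that lemma, the elements of the fibre over each projection $\overline{t_{u_i}(1)} \in \PSp_4(q)$ are pairwise non-conjugate. Applying the lemma to the set of these five projections then shows that the corresponding induced subgraph in $\Gammaone(\PSp_4(q))$ is again a $5$-cycle, completing the proof.

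The main obstacle is the choice of the five direction vectors: one needs every non-consecutive pairing to be non-zero uniformly across all characteristics, which rules out several of the more natural candidates. For instance, replacing $u_5 = f_2$ by $e_1 - e_2 - f_2$ still yields a valid $5$-cycle in odd characteristic, but the pairing $(u_3, u_5) = -2$ vanishes in characteristic~$2$ and the cycle acquires a diagonal.
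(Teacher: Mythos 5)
Your explicit construction is sound: the commutation criterion for symplectic transvections is correct, the five pairings of $u_1,\dots,u_5$ do come out as claimed ($0$ on consecutive pairs, $1$ on all non-consecutive pairs), so the induced subgraph on $t_{u_1}(1),\dots,t_{u_5}(1)$ is a genuine $5$-cycle in every characteristic, and the passage to $\PSp_4(q)$ for odd $q$ via Lemma~\ref{l: pass to qs} is legitimate. This is also a more uniform route than the paper's: there, a transvection $5$-cycle is exhibited only for $\Sp_4(q)$ with $q$ odd, and the remaining cases are settled by locating known bad subgroups ($\Sp_4(2)\cong\Sym_6$ when $q$ is even; $\Sym_6<\PSp_4(3)$ and a field-extension subgroup $\PSL_2(q^2).2<\PSp_4(q)$ when $q$ is odd; Lemma~\ref{l: higher rank} when $m\ge 3$ and $q\ne 2,4$). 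Your single cycle, valid in all characteristics, replaces most of that case analysis.

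There is, however, one genuine gap: the exceptional double cover $2\cprod\Sp_6(2)$. The proposition quantifies over \emph{all} quasisimple $G$ with $G/Z(G)\cong\PSp_{2m}(q)$, and for $(2m,q)=(6,2)$ the Schur multiplier of the simple group $\Sp_6(2)$ is exceptionally non-trivial, so the universal cover is $2\cprod\Sp_6(2)$ --- a central \emph{extension} of $\Sp_6(2)$, not a central quotient of it. Your reduction (``the subgroup survives every central quotient and embeds in $G$'') therefore never reaches this group. The preimage in $2\cprod\Sp_6(2)$ of your copy of $\Sp_4(2)\cong\Sym_6$ is a group $H$ with $H/Z\cong\Sym_6$ for $Z$ central of order $2$; if that extension is non-split (and $\Sym_6$ does admit non-split double covers), your five transvections, which are involutions in characteristic $2$, need not lift to a $5$-cycle, and Lemma~\ref{l: pass to qs} is not available without verifying its non-conjugacy hypothesis for their preimages. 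The paper closes this case separately by observing that $2\cprod\Sp_6(2)$ contains $\PSU_3(3)$ and invoking Lemma~\ref{l: unitary3}; you need an argument of that kind, or a direct analysis of the relevant double cover of $\Sym_6$, to complete the proof.
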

\begin{proof}
By Lemma \ref{l: higher rank}, it will be sufficient to deal with the case that $q$ is even, and with the
case $m=2$.

Suppose that $q$ is even. Since $\Sp_4(2)\cong \Sym_6$, we know from Corollary~\ref{c: symmetric} that the
commuting graph of $\Sp_4(2)$ is not perfect. Since $\Sp_{2m}(q)$ has $\Sp_4(2)$ as a subgroup for $m\ge 2$,
it follows that $\Gammaone(\Sp_{2m}(q))$ is not perfect.
Referring to \cite[Table 5.1.D]{kl}, we see that the only quasisimple group left to consider is the double cover
of $\Sp_6(2)$. But reference to \cite{atlas} shows that this group contains $\PSU_3(3)$ as a subgroup, and hence it
has a non-perfect commuting graph by Lemma~\ref{l: unitary3}.

Suppose next that $q$ is odd and that $m=2$.
Referring to \cite{atlas} we see that $\PSp_4(3)$ contains a subgroup isomorphic to $\Sym_6$.
If $q>3$, then \cite[Proposition 4.3.10]{kl} tells us that $G$ contains a field extension subgroup isomorphic
to $\PSL_2(q^2).2$; the commuting graph of this subgroup is not perfect by Lemma \ref{l: psl2 first}. It follows that,
in either case, $G$ has a non-perfect commuting graph.

It remains to deal with the groups $\Sp_4(q)$ for odd $q$. Our argument is similar to that for the
unitary groups $\PSU_3(q)$ in
Lemma~\ref{l: unitary3} above, and we again refer the reader to \cite[Chapter 2]{Dieu} for facts about transvections.
Let $G=\Sp_4(q)$, and let $V$ be the natural module for $G$, with $F$ the underlying alternating form on $V$. For any
non-zero $v\in V$, the transvection map $T_v:x\mapsto x+F(x,v)v$ lies in $G$. The maps $T_v$ and $T_w$ commute if and
only if $F(v,w)=0$. Let $\{e_1,f_1,e_2,f_2\}$ be a hyperbolic basis for $V$; so $F(e_1,f_1)=F(e_2,f_2)=1$, and
$\langle e_2,f_2\rangle = \langle e_1,f_1\rangle^\perp$. Define
\[
v=e_1,\quad  w= e_2,\quad x=f_1,\quad y=f_1+f_2,\quad z= e_1-e_2+f_2.
\]
Then the induced subgraph of $\Gammaone(G)$ on the vertices $T_v,\ T_w,\ T_x,\ T_y,\ T_z$ is a $5$-cycle, and so
$\Gammaone(G)$ is not perfect.
\end{proof}

\begin{prop}\label{p: orth}
Let $G$ be a quasisimple group with $G/Z(G)$ isomorphic to $\POmega_{2m+1}(q)$ with $m\ge 3$ or to
$\POmega^\pm_{2m}(q)$ with $m\ge 4$. Then $\Gammaone(G)$ is not perfect.
\end{prop}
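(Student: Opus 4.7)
The plan is to follow the approach of Proposition~\ref{p: symp}: first apply Lemma~\ref{l: higher rank} to reduce the problem to a small number of residual cases, then exhibit for each residual case a subgroup of $G$ whose commuting graph is already known to be non-perfect, and finally invoke Lemma~\ref{l: subgroup obstruction}.

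Lemma~\ref{l: higher rank} immediately handles all cases with $q\notin\{2,4\}$. Since the odd-dimensional orthogonal groups $\POmega_{2m+1}(q)$ require $q$ to be odd, the odd-dimensional case of the proposition is settled entirely in this step. The residual problem is therefore $G/Z(G)\cong\POmega^\pm_{2m}(q)$ with $m\ge 4$ and $q\in\{2,4\}$. For these groups, the key observation is that in characteristic $2$, the stabilizer in $\Omega^\pm_{2m}(q)$ of a non-singular vector of the natural module is isomorphic to $\Sp_{2m-2}(q)$, reflecting the standard isomorphism $\Omega_{2m-1}(q)\cong \Sp_{2m-2}(q)$ in characteristic $2$ (see \cite[\S4.1]{kl}). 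Since $m\ge 4$, this subgroup contains $\Sp_6(q)$, whose commuting graph is not perfect by Proposition~\ref{p: symp}. For an arbitrary quasisimple cover $G$, the derived subgroup of the preimage of $\Sp_{2m-2}(q)$ in $G$ is a perfect central extension of $\Sp_{2m-2}(q)$, hence is a quasisimple group with central quotient $\PSp_{2m-2}(q)$, and Proposition~\ref{p: symp} then applies directly to it.

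The principal technical point in this plan is the lifting step: verifying that the preimage of $\Sp_{2m-2}(q)$ in the cover $G$ genuinely yields a quasisimple subgroup satisfying the hypotheses of Proposition~\ref{p: symp}. This turns out to be routine, since $\Sp_{2m-2}(q)$ is itself perfect, so the derived subgroup of any central extension of it is again a perfect central extension, and thus a quasisimple cover to which Proposition~\ref{p: symp} applies without any further case analysis.
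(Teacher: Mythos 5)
Your proposal is correct and follows essentially the same route as the paper: reduce to $q$ even via Lemma~\ref{l: higher rank}, dispose of the odd-dimensional case using the characteristic-$2$ coincidence with symplectic groups, and then find a quasisimple cover of $\Sp_{2m-2}(q)$ inside $G$ (the paper cites \cite[Proposition 4.1.7]{kl} for exactly the non-singular-vector stabilizer you describe) before invoking Proposition~\ref{p: symp}. Your explicit verification that the derived subgroup of the preimage is quasisimple is a correct elaboration of a step the paper leaves implicit.
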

\begin{proof}
By Lemma \ref{l: higher rank}, it is sufficient to deal with the case that $q$ is even.
Furthermore, we may suppose that $G/Z(G)\cong \POmega^\pm_{2m}(q)$, because of the isomorphism
$\POmega(2m+1,2^k)\cong \PSp(2m,2^k)$.
Now \cite[Proposition 4.1.7]{kl} implies that $G$ contains a subgroup isomorphic to a quasisimple cover of
$\Sp_{n-2}(q)$, and the result follows from Proposition~\ref{p: symp}.
\end{proof}

\begin{prop}\label{p: lin un even}
 Let $n\geq 4$ and let $G$ be a quasisimple group with $G/Z(G)$ isomorphic to $\PSL_n(q)$ or $\PSU_n(q)$. Then
 $\Gammaone(G)$ is not perfect.
\end{prop}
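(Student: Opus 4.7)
By Lemma~\ref{l: higher rank} the cases $q\notin\{2,4\}$ are already settled, so the plan is to dispatch $q\in\{2,4\}$ by exhibiting in every quasisimple cover $G$ a subgroup whose commuting graph is known to be non-perfect, then invoking Lemma~\ref{l: subgroup obstruction}.

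For the linear case I would exploit the isomorphism $\PSL_4(2)\cong \Alt_8$. Regardless of whether $q=2$ or $q=4$, the group $\SL_n(q)$ contains $\SL_n(2)$ (trivially when $q=2$; via the subfield embedding $\F_2\hookrightarrow\F_4$ when $q=4$), and $\SL_n(2)$ in turn contains a block-diagonal copy of $\SL_4(2)\cong \Alt_8$. So $\PSL_n(q)$ always contains a copy of $\Alt_8$ in our range. Since $\Alt_8$ has trivial centre and Schur multiplier $C_2$, the derived subgroup of the preimage of this $\Alt_8$ in the quasisimple cover $G$ is a perfect central extension of $\Alt_8$, hence isomorphic either to $\Alt_8$ or to $2\cprod \Alt_8$. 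Neither lies in the list of Proposition~\ref{p: alternating}, so each has a non-perfect commuting graph and the linear case is done.

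For the unitary case I would split by $q$. When $q=4$: the stabilizer in $\SU_n(4)$ of a non-degenerate $3$-dimensional subspace contains a subgroup whose image in $G$ is a central quotient of $\SU_3(4)$, and Lemma~\ref{l: unitary3} provides a forbidden subgraph directly. When $q=2$ and $n\geq 5$: the stabilizer of a non-degenerate $4$-space contains, after passing to the derived subgroup, a quasisimple cover of $\PSU_4(2)\cong \PSp_4(3)$; Proposition~\ref{p: symp} then applies because it addresses \emph{every} quasisimple cover of $\PSp_4(3)$. The case $n=4$, $q=2$ is handled by the same isomorphism together with Proposition~\ref{p: symp}.

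The main delicacy I anticipate is the lift to quasisimple covers, particularly in view of the exceptional Schur multipliers of $\PSL_4(2)$, $\PSU_4(2)$, and $\PSU_6(2)$: one needs to be sure that the subgroup of $G$ chosen really does project (or pull back) to the claimed simple group, and that one of its quasisimple central extensions actually sits inside $G$. What makes this manageable is precisely that Propositions~\ref{p: alternating} and~\ref{p: symp} handle every quasisimple extension of the simple subgroup one cares about, so only existence of some such extension inside $G$—rather than its precise isomorphism type—needs to be pinned down, and this is routine from the subgroup structure given in \cite[Chapter~4]{kl}.
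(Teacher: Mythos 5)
Your argument is correct, but it follows a different route from the paper's. For $q$ even the paper routes both families through symplectic subgroups: for $n$ even it cites \cite[Propositions 4.5.6 and 4.8.3]{kl} to embed $\Sp_n(q)$ in both $\PSL_n(q)$ and $\PSU_n(q)$ and then invokes Proposition~\ref{p: symp} for a quasisimple cover of $\Sp_n(q)$ (handling $\PSL_4(2)\cong\Alt_8$ and $\PSU_4(2)\cong\PSp_4(3)$ as special cases), and for $n$ odd it descends to a subgroup covering $\PSL_{n-1}(q)$ or $\PSU_{n-1}(q)$ and reduces to the even case. You instead dispatch the linear case in one stroke for all $n\geq 4$ via the subfield and block-diagonal embeddings $\SL_4(2)\cong\Alt_8\leq\SL_n(2)\leq\SL_n(q)$, and the unitary case via stabilizers of non-degenerate subspaces containing $\SU_3(4)$ (for $q=4$, where Lemma~\ref{l: unitary3} applies since it only needs $q>2$) or $\SU_4(2)\cong\PSp_4(3)$ (for $q=2$). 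Your handling of the lift to covers is sound: the derived subgroup of the preimage of a perfect subgroup of $G/Z(G)$ is a perfect central extension of it, and since Propositions~\ref{p: alternating} and~\ref{p: symp} exclude \emph{every} quasisimple cover of $\Alt_8$ and of $\PSp_4(3)$, the exceptional Schur multipliers of $\PSL_4(2)$, $\PSU_4(2)$ and $\PSU_6(2)$ cause no difficulty. What your approach buys is a uniform treatment of all $n\geq 4$ in the linear case without the odd-to-even reduction (which in the paper is stated a little loosely for general covers when $n$ is odd); what the paper's buys is a single symplectic subgroup serving both the linear and unitary families at once.
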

\begin{proof}
We suppose that $q$ is even, since otherwise Lemma~\ref{l: higher rank} gives the result.

Suppose that $n$ is even. Then \cite[Propositions 4.5.6 and 4.8.3]{kl} imply that $\Sp_{n}(q)$ is a subgroup of
both $\PSL_{n}(q)$ and $\PSU_n(q)$. If $(n,q)\neq (4,2)$, then $\Sp_n(q)$ is simple and
so some quasisimple cover of $\Sp_n(q)$ is a subgroup of $G$. The result now follows from Proposition~\ref{p: symp}.
If $G/Z(G)\cong \PSL_4(2)$, then the result follows from
Proposition~\ref{p: alternating} since $\PSL_4(2) \cong \Alt_8$. If $G/Z(G)\cong\PSU_4(2)$,
then the result follows from Proposition~\ref{p: symp}, since $\PSU_4(2)\cong \PSp_4(3)$.

Suppose, on the other hand, that $n$ is odd. If $G=\PSL_n(q)$, then $G$ contains a subgroup $H$ such that
$H/Z(H)\cong \PSL_{n-1}(q)$. Similarly,
if $G=\PSU_n(q)$, then $G$ contains a subgroup $H$ such that $H/Z(H)\cong \PSU_{n-1}(q)$.
Since $n-1$ is even, the result in each case follows from above.
\end{proof}

\subsection{Ree and Suzuki groups}

\begin{prop}\label{p: suzuki}
 If $G=\Sz(q)$ with $q>2$, then $\Gammaone(G)$ is perfect.
\end{prop}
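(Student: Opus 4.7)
The plan is to reduce the problem to understanding the induced subgraph on the involutions of $\Sz(q)$, which will turn out to be a disjoint union of cliques. The main preparatory work is a centralizer analysis; once that is in hand, Lemma~\ref{l: abelian centralizers} kills off all vertices of non-$2$-power order, and the TI-property for the Sylow $2$-subgroups finishes the job.

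First I would record the well-known structure of $\Sz(q)$, for $q=2^{2a+1}$ with $a\ge 1$. The group is centreless, has cyclic maximal tori of orders $q-1$, $q\pm\sqrt{2q}+1$, and its Sylow $2$-subgroup $Q$ is a Suzuki $2$-group of order $q^2$ with $Z(Q)=\Phi(Q)=[Q,Q]=\Omega_1(Q)$ elementary abelian of order $q$. Every element of $Q\setminus Z(Q)$ has order $4$ with square in $Z(Q)$. Using the standard presentation $Q=\{(a,b):a,b\in\mathbf{F}_q\}$ with $(a_1,b_1)(a_2,b_2)=(a_1+a_2,\,b_1+b_2+a_1 a_2^\theta)$, a direct commutator computation shows that for $x=(a,b)$ with $a\ne0$ one has $C_Q(x)=\langle x\rangle Z(Q)$, of order $2q$, and this group is abelian. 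For semisimple $x$ (odd order) the centralizer $C_G(x)$ is a cyclic maximal torus, and for an involution $t$ the centralizer $C_G(t)$ is the unique Sylow $2$-subgroup $Q$ containing $t$.

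Hence the only non-identity elements of $G=\Sz(q)$ with non-abelian centralizer are the involutions. By Lemma~\ref{l: abelian centralizers}, every vertex of a forbidden subgraph of $\Gammaone(G)$ must be an involution. It therefore suffices to prove that the induced subgraph $\Gammaone(T)$ on the set $T$ of involutions of $G$ contains no forbidden subgraph.

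Now I invoke the fact that the Sylow $2$-subgroups of $\Sz(q)$ form a TI-set (equivalently, distinct Sylows intersect trivially). Two involutions $s,t$ commute if and only if $\langle s,t\rangle$ is an elementary abelian $2$-group, which lies in some Sylow $Q$ and hence (being a $2$-group of exponent $2$) in $Z(Q)$; by the TI property this $Q$ is uniquely determined by either of $s$ or $t$. Thus two involutions commute if and only if they lie in the same Sylow $2$-subgroup, and $\Gammaone(T)$ is a disjoint union of cliques, one of size $q-1$ for each Sylow $2$-subgroup of $G$. A disjoint union of cliques is visibly perfect: every induced subgraph is again a disjoint union of cliques, for which chromatic number and clique number both equal the size of the largest component. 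In particular $\Gammaone(T)$ contains no induced odd cycle of length at least $5$ nor the complement of one, and the same then holds for $\Gammaone(G)$. The only real obstacle in the argument is a correct accounting of the centralizers of $4$-elements inside the Suzuki $2$-group; everything else reduces to the TI-property and a trivial observation about disjoint unions of cliques.
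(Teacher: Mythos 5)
Your argument is correct and is essentially the paper's own proof: both reduce to the induced subgraph on involutions via the observation that involutions are the only non-identity elements with non-abelian centralizers, and both then use the fact that the involutions commuting with a given involution all lie in an elementary abelian subgroup (the centre of the relevant Sylow $2$-subgroup), so that each vertex's neighbourhood is a clique and no forbidden subgraph can occur. Your additional use of the TI-property to exhibit the whole graph on involutions as a disjoint union of cliques is a harmless (and correct) globalization of the same local fact.
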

The result is also true for $q=2$, but we omit it from the statement since $\Sz(2)$ is not simple.

\begin{proof}
 We refer to \cite{suzuki2} and observe that the only non-trivial elements in $G$ which have non-abelian centralizer
 are the involutions (and there is a single conjugacy class of these). Let $\Lambda$ be a forbidden subgraph of
 $\Gammaone(G)$ and observe that all of its vertices correspond to involutions in $G$. Let $g$ be one such. Then
 the set of involutions which commute with $g$ lie in an elementary abelian subgroup of $\Cent_G(g)$ and hence any two
 neighbours of $g$ in $\Lambda$ must themselves be neighbours, a contradiction.
\end{proof}

\begin{prop}\label{p: ree 1}
 If $G={^2F_4}(q)'$ with $q\geq 2$, then $\Gammaone(G)$ is not perfect.
\end{prop}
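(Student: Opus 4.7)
The plan is to exhibit, in each case, a subgroup of $G = {}^2F_4(q)'$ whose commuting graph has already been shown to be non-perfect in the preceding material, and then invoke Lemma~\ref{l: subgroup obstruction}. I would split into two cases depending on whether $G$ is the Tits group ($q=2$) or a genuine simple Ree group ($q = 2^{2a+1}$ with $a \geq 1$), since the argument in each case rests on a different known subgroup.

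For the Tits group $G = {}^2F_4(2)'$, I would appeal to the list of maximal subgroups given in \cite{atlas}, which includes a copy of $\PSL_2(25)$. Since $25$ is odd and $25 > 9$, Lemma~\ref{l: psl2 first} tells us that $\Gammaone(\PSL_2(25))$ is not perfect. Applying Lemma~\ref{l: subgroup obstruction} then gives the result for $G = {}^2F_4(2)'$.

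For $q = 2^{2a+1}$ with $a \geq 1$, we have $q \geq 8$, and so in particular $q \notin \{2,4\}$. I would cite Malle's classification of the maximal subgroups of ${}^2F_4(q)$ to produce a subgroup $H \leq G$ isomorphic to $\Sp_4(q)$ (one could equally well use $\SU_3(q) \leq G$ together with Lemma~\ref{l: unitary3}). Proposition~\ref{p: symp} then gives that $\Gammaone(\Sp_4(q))$ is not perfect, and Lemma~\ref{l: subgroup obstruction} finishes the proof.

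The only real obstacle is identifying the appropriate subgroups; once the subgroups are cited, the argument reduces immediately to cases already handled in \S\ref{s: proof}. Since both cases are routine appeals to established subgroup-structure results, the proof is short and requires no new graph-theoretic construction.
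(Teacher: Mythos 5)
Your argument is correct, but it takes a different route from the paper's. The paper handles all values of $q$ with a single subgroup chain: it observes that ${}^2F_4(2)'$ contains $\Sym_6$ (visible from the ATLAS list of maximal subgroups, which includes $\Alt_6.2^2$), hence contains $\Sym_5$, so Lemma~\ref{l: S5} and Lemma~\ref{l: subgroup obstruction} dispose of the Tits group; and then, since ${}^2F_4(2)'$ is a subgroup of ${}^2F_4(q)$ for every $q=2^{2a+1}$, the same obstruction propagates to all larger $q$ at once. You instead treat the two cases with unrelated subgroups: $\PSL_2(25)$ inside the Tits group (correct --- it is a maximal subgroup, and $25$ is odd and exceeds $9$, so Lemma~\ref{l: psl2 first} applies), and $\Sp_4(q)$ or $\SU_3(q)$ inside ${}^2F_4(q)$ for $q\ge 8$ via Malle's classification (also correct, feeding into Proposition~\ref{p: symp} or Lemma~\ref{l: unitary3}). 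Both routes are sound and non-circular, since every result you invoke precedes this proposition in the paper. What the paper's approach buys is economy: it needs only one small, ATLAS-visible subgroup and the standard containment of the Tits group in the larger Ree groups, avoiding any appeal to Malle's classification. What your approach buys is independence from that containment, at the cost of heavier citations; if you keep your version, note that spotting ${}^2F_4(2)'\le {}^2F_4(q)$ would have collapsed your two cases into one.
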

\begin{proof}
We consult \cite{atlas} to see that ${^2F_4}(2)'$ contains $\Sym_6$ and hence, by Lemma~\ref{l: S5}, ${^2F_4}(2)'$ is not perfect. Since $G$ contains ${^2F_4}(2)'$ as a subgroup, the result follows.
\end{proof}

\begin{prop}\label{p: ree 2}
 If $G={^2G_2}(q)$, then $\Gammaone(G)$ is not perfect.
\end{prop}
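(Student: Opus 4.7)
The plan is to reduce the problem to Lemma~\ref{l: psl2 first} by exhibiting a subgroup of $G = {}^2G_2(q)$ isomorphic to $\PSL_2(q)$, and then invoking Lemma~\ref{l: subgroup obstruction}. Since $q = 3^{2a+1}$ with $a\ge 1$, we have $q \ge 27$ odd, which in particular puts us in the regime $q>9$ where $\Gammaone(\PSL_2(q))$ is known to be non-perfect.

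The one nontrivial ingredient I need is the classical structure theorem for Ree groups: for any involution $t \in G$, the centralizer $\Cent_G(t)$ is isomorphic to $\langle t\rangle \times \PSL_2(q)$. This is one of the defining features of ${}^2G_2(q)$, and can be cited from Ward's original paper or from standard references such as \cite{kl}. In particular $\PSL_2(q)$ appears as a direct factor of $\Cent_G(t)$, hence as a subgroup of $G$.

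With $\PSL_2(q) \le G$ in hand, Lemma~\ref{l: psl2 first} asserts that $\Gammaone(\PSL_2(q))$ contains a forbidden induced subgraph; by Lemma~\ref{l: subgroup obstruction} the same subgraph is an induced subgraph of $\Gammaone(G)$, and so $\Gammaone(G)$ is not perfect, as required.

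There is essentially no obstacle beyond invoking the centralizer structure. One small aesthetic point: the argument via Lemma~\ref{l: psl2 first} yields an induced $5$-cycle in $\Gammaone(G)$ whenever the bound in that lemma delivers one, whereas the introduction flags that the authors' argument for ${}^2G_2(q)$ produces a $7$-cycle. If one prefers to match that comment, the alternative route is to work inside a cyclic maximal torus $T$ of order $q-1$ (or inside the dihedral normaliser of such a torus) and display seven pairwise-commuting/non-commuting conjugates of an involution whose commuting pattern, read cyclically, realises a $7$-cycle; but the shortest path using only results already proved in this paper is the subgroup reduction above.
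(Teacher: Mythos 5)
Your argument is correct for the simple Ree groups and takes a genuinely different route from the paper. The paper first treats $q=3$ directly, exhibiting an explicit induced $7$-cycle in ${}^2G_2(3)\cong\Aut(\SL_2(8))$, and then handles $q>3$ via the subgroup chain ${}^2G_2(3)<{}^2G_2(q)$ together with Lemma~\ref{l: subgroup obstruction}. You instead invoke Ward's theorem that the centralizer of an involution in ${}^2G_2(q)$ is $C_2\times\PSL_2(q)$, and feed the resulting $\PSL_2(q)$ subgroup (with $q=3^{2a+1}\ge 27$ odd) into Lemma~\ref{l: psl2 first}. That is a valid and shorter reduction, and it has the side benefit of producing an induced $5$-cycle rather than a $7$-cycle, which is consistent with (and sharper than) the authors' remark in the introduction that their own argument for this family only yields $7$-cycles. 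What your route costs is twofold. First, it imports a nontrivial external fact (the involution centralizer structure of the Ree groups) where the paper's argument is self-contained modulo an explicit computation in $\Aut(\SL_2(8))$. Second, and more substantively, your argument does not cover $q=3$: there the involution centralizer is $C_2\times\PSL_2(3)$ and the group contains $\PSL_2(8)$, both of which have perfect commuting graphs, so no subgroup reduction of this kind can work --- indeed the paper notes that $\Gammaone({}^2G_2(3))$ contains no induced $5$-cycle at all, which is why the authors need their explicit $7$-cycle. Since the proposition as stated (and as proved in the paper) includes $q=3$, you should either restrict the statement to the simple Ree groups, for which your proof is complete and which is all that Theorem~\ref{t: main} requires, or supplement it with a separate treatment of ${}^2G_2(3)$.
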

\begin{proof}
We first deal with the case $q=3$, when $G$ is not quasisimple, but isomorphic to the automorphism group of
$\SL_2(8)$. Let $F$ be the automorphism of $\SL_2(8)$ induced by the field automorphism
$x\mapsto x^2$. Let $\alpha$ be an element of $\F_8$ such that $\alpha^3+\alpha=1$. We define the following elements
of $\SL_2(8)$:
\[
J=\left(\begin{array}{cc}1&0\\1&1\end{array}\right),\quad K=\left(\begin{array}{cc}1&1\\0&1\end{array}\right),\quad
X=\left(\begin{array}{cc}\alpha&0\\\alpha^6&\alpha^6\end{array}\right),\quad
Y=\left(\begin{array}{cc}\alpha^4&\alpha\\0&\alpha^3\end{array}\right).
\]
Now it is a straightforward computation to verify that the induced subgraph of $\Gammaone(G)$ on the vertices
\[
F^X,\ J^X,\ J,\ F,\ K,\ K^Y,\ F^Y
\]
is isomorphic to a $7$-cycle, and so $\Gammaone(G)$ is not perfect. (The point of
this construction is that the matrices $X$ and $Y$ lie in opposite Borel subgroups in $\SL_2(8)$, and that the
commutator $[XY^{-1},F]$ is fixed by $F$. It is perhaps worth noting that there is no induced subgraph of
$\Gammaone(G)$ isomorphic to a $5$-cycle in this case.)

We now observe that if $q>3$ then $G$ contains a subgroup isomorphic to ${^2G_2}(3)$, and so
the result follows.
\end{proof}

There are two non-simple quasisimple groups whose quotients are Ree or Suzuki groups and we deal with these in the
final result of this section.

\begin{lem}\label{l: suzuki exceptional}
 If $G=2\cprod \Sz(8)$ or $(2\times 2)\cprod\Sz(8)$, then $\Gammaone(G)$ is perfect.
\end{lem}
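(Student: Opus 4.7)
Let $G$ denote either $2\cprod\Sz(8)$ or $(2\times 2)\cprod\Sz(8)$, write $Z=Z(G)$, and let $\varphi\colon G\to\bar G:=\Sz(8)$ be the natural projection. Suppose for contradiction that $\Gammaone(G)$ has a forbidden subgraph $\Lambda$. The plan is to show in three steps that $\Lambda$ is confined to a single Sylow $2$-subgroup of $G$ whose commuting graph we can describe explicitly. First, I would argue that every vertex of $\Lambda$ is a $2$-element. By Lemma~\ref{l: abelian centralizers}, each vertex $g$ has $\Cent_G(g)$ non-abelian. Write $g=g_2g_{2'}$. If $g_{2'}\neq 1$, then $\varphi(g_{2'})$ is a non-trivial odd-order element of $\bar G$, whose centralizer (by the structure of $\Sz(q)$ used in Proposition~\ref{p: suzuki}) is cyclic of order $5$, $7$, or $13$. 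Since this order is coprime to $|Z|$, the preimage $\varphi^{-1}(\Cent_{\bar G}(\varphi(g_{2'})))$ is a direct product $Z\times C_{|\varphi(g_{2'})|}$ and hence abelian; thus $\Cent_G(g)\leq\Cent_G(g_{2'})$ is abelian, a contradiction.

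Second, I would reduce to a single Sylow $2$-subgroup. The Sylow $2$-subgroups of $\bar G$ are TI subgroups, and for any non-trivial $2$-element $\bar x$ in $\bar G$ one has $\Cent_{\bar G}(\bar x)$ lying in the unique Sylow $2$-subgroup $P$ containing $\bar x$ (direct check: for an involution $\bar x$, $\Cent_{\bar G}(\bar x)=P$; for $\bar x$ of order $4$, $\Cent_{\bar G}(\bar x)\leq\Cent_{\bar G}(\bar x^2)=P$). It follows that distinct Sylow $2$-subgroups of $G$ intersect in at most $Z$, and that for every $2$-element $g\in G\setminus Z$ one has $\Cent_G(g)\leq\varphi^{-1}(P)=:\tilde P$. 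Since $\Lambda$ is connected, all its vertices lie in a single Sylow $2$-subgroup $\tilde P$ of $G$.

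Third, I would show that the induced subgraph of $\Gammaone(G)$ on $\tilde P\setminus Z$ is perfect. For $\bar G$ itself the analogous fact is easy: the Suzuki $2$-group $P$ has class $2$ with $Z(P)=P'$ elementary abelian of order $q=8$, every non-central element of $P$ has order $4$ with square in $Z(P)$, and $\Cent_P(\bar x)=Z(P)\langle\bar x\rangle$ for $\bar x\in P\setminus Z(P)$; consequently $\Gammaone(P\setminus Z(P))$ is a disjoint union of $q-1$ cliques, one per non-trivial coset of $Z(P)$, and is therefore perfect. The structural claim I would prove about $\tilde P$ is the direct analogue: $\tilde P$ has nilpotency class $2$ with $Z(\tilde P)=\varphi^{-1}(Z(P))$, and $\Cent_{\tilde P}(\tilde x)=Z(\tilde P)\langle\tilde x\rangle$ for every $\tilde x\in\tilde P\setminus Z(\tilde P)$. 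Granting this, the commuting graph on $\tilde P\setminus Z(\tilde P)$ is again a disjoint union of cliques (the non-trivial cosets of $Z(\tilde P)$); adjoining the vertices of $Z(\tilde P)\setminus Z$, which are universal in the induced subgraph, preserves perfectness.

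The main obstacle is the structural claim in Step 3, since it depends delicately on the particular central extension: one needs to verify that the bilinear commutator pairing $Z(P)\times P\to Z$ induced by the extension vanishes (giving $\varphi^{-1}(Z(P))\leq Z(\tilde P)$) and that the pairing $\tilde P/Z(\tilde P)\times\tilde P/Z(\tilde P)\to Z(\tilde P)/Z$ behaves as in the uncovered case. Since $\tilde P$ is a specific $2$-group of order $128$ or $256$, this can be checked directly in GAP or Magma, consistent with the computational convention adopted elsewhere in the paper; alternatively, for the universal cover it can be extracted from standard tables of Schur covers.
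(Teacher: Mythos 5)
Your Steps 1 and 2 are sound: elements with non-trivial odd part have abelian centralizers, the Sylow $2$-subgroups of $\Sz(8)$ are TI with every $2$-local centralizer contained in a single Sylow subgroup, and so any forbidden subgraph would be confined to a single $\tilde P=\varphi^{-1}(P)$. The difficulty is that the structural claim on which Step 3 rests is false, and the Magma check you defer would refute it rather than confirm it. Concretely: the preimages in $G$ of a fixed involution of $\Sz(8)$ all have the same square (since $(z\tilde t)^2=\tilde t^2$ for $z\in Z$), and $G$ does possess non-central involutions, so \emph{every} preimage of an involution of $\Sz(8)$ is again an involution. For $G=2\cprod\Sz(8)$ this gives $2\times 455=910$ non-central involutions, which form a single conjugacy class (this is the fact established computationally in the paper's proof); hence each has centralizer of order $|G|/910=64$, whereas $|\tilde P|=128$. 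So an involution $\tilde t$ lying over $Z(P)$ is \emph{not} central in $\tilde P$: the commutator pairing $Z(P)\times P\to Z$ does not vanish, $Z(\tilde P)=Z(G)$ rather than $\varphi^{-1}(Z(P))$, and $\Gammaone(\tilde P\setminus Z)$ is not the union of cliques with universal vertices that you describe. The same count rules out the claim for $(2\times 2)\cprod\Sz(8)$.

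What is true --- and is what the paper's (also computational) argument uses --- is weaker but sufficient. The unique non-central class $C$ of involutions is the only non-central class whose members have non-abelian centralizers, so every vertex of a forbidden subgraph lies in $C$; and since all preimages of involutions are involutions, the subgroup $\varphi^{-1}(Z(P))$ has exponent $2$ and is therefore elementary abelian, even though it is not central in $\tilde P$. Two adjacent vertices of $C$ lie over the same $Z(P)$ (the image of one centralizes the image of the other, and the only involutions in $\Cent_{\Sz(8)}(t)=P$ lie in $Z(P)$), so by connectivity a forbidden subgraph would be contained in the abelian group $\varphi^{-1}(Z(P))$ and hence be complete, a contradiction. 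Your reduction to $\tilde P$ is compatible with this repair, but as written your Step 3 cannot be completed: the ``uncovered'' structure of $P$ does not lift to the covers, which is precisely where the non-trivial Schur multiplier of $\Sz(8)$ lives.
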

\begin{proof}
 Using Magma \cite{Magma} we establish that $G$ has precisely one non-central conjugacy class $C$ of involutions.
 What is more $C$ is the only non-central conjugacy class whose members have non-abelian centralizers. Thus, by
 Lemma~\ref{l: abelian centralizers}, it is enough to show that $\Gammaone(C)$ is perfect.

Let $g\in C$ and suppose that $\Lambda$ is a forbidden subgraph of $\Gammaone(C)$. The set of involutions which
commute with $g$ lie in an elementary abelian subgroup of $\Cent_G(g)$ and hence any two neighbours of $g$ in $\Lambda$
must themselves be neighbours, a contradiction.
\end{proof}

\subsection{The remaining exceptional groups}

\begin{prop}\label{p: g2}
If $G$ is a quasisimple group with $G/Z(G)$ isomorphic to $G_2(q)$ or to ${^3D_4}(q)$,
then $\Gammaone(G)$ is not perfect.
\end{prop}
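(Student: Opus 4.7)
The plan is to find, inside each group under consideration, a subgroup whose commuting graph is already known (from Lemma~\ref{l: unitary3}) to be non-perfect, and to finish via Lemma~\ref{l: subgroup obstruction}. Throughout, the tool of choice is the $\SU_3(q)$ subgroup of $G_2(q)$ arising as the short-root $A_2$ subsystem subgroup, whose existence is documented in the classification of maximal subgroups of $G_2(q)$ (see \cite[Table 5.1]{kl}).

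First I would treat the simple group $G = G_2(q)$: since $G_2(2)$ is not simple, we need only consider $q\geq 3$, and then Lemma~\ref{l: unitary3} immediately shows that $\Gammaone(\SU_3(q))$ is not perfect, whence neither is $\Gammaone(G_2(q))$. For the two exceptional covers $3\cprod G_2(3)$ and $2\cprod G_2(4)$ I would lift this $\SU_3(q)$ subgroup: since the Schur multipliers of $\SU_3(3)$ and $\SU_3(4)$ are trivial (\cite{atlas}), the preimage $P$ of $\SU_3(q) \leq G_2(q)$ in such a cover satisfies $[P,P] \cong \SU_3(q)$, because $[P,P]$ is a perfect central extension of the perfect group $\SU_3(q)$ whose multiplier is trivial. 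Thus each cover contains a copy of $\SU_3(q)$, and Lemma~\ref{l: subgroup obstruction} applies as before.

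Finally I would handle $G = {^3D_4}(q)$; since the Schur multiplier of ${^3D_4}(q)$ is trivial, only the simple group itself needs to be treated. It is standard that ${^3D_4}(q)$ contains $G_2(q)$ as a subgroup (the fixed points of the triality automorphism used in the twist). For $q \geq 3$ the claim then follows from the $G_2(q)$ case already handled. For $q = 2$, the group $G_2(2)$ is not simple but is isomorphic to $\PSU_3(3).2$, so it contains $\SU_3(3) = \PSU_3(3)$, whose commuting graph is non-perfect by Lemma~\ref{l: unitary3}; hence so is that of ${^3D_4}(2)$. The only real obstacle is bookkeeping, namely verifying the subgroup containments $\SU_3(q) \leq G_2(q) \leq {^3D_4}(q)$ and the claimed triviality of Schur multipliers; both are standard consequences of \cite{atlas} and \cite{kl}, so the argument reduces to an application of Lemmas~\ref{l: unitary3} and~\ref{l: subgroup obstruction}.
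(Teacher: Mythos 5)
Your argument is correct and is essentially the paper's own proof: both reduce to Lemmas~\ref{l: unitary3} and~\ref{l: subgroup obstruction} by exhibiting a three-dimensional unitary subgroup, lifted into the exceptional covers $2\cprod G_2(4)$ and $3\cprod G_2(3)$ via triviality of the relevant Schur multipliers, and both pass from ${}^3D_4(q)$ to $G_2(q)$. The only difference is cosmetic: the paper uses the single fixed subgroup $\PSU_3(3)=G_2(2)'<G_2(q)$ for every $q$ (citing \cite{kleidman2} for this containment rather than \cite{kl}, whose tables do not cover the exceptional groups), whereas you use the varying maximal-rank subgroup $\SU_3(q)$.
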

\begin{proof}
Suppose first that $G$ is simple. Referring to \cite{kleidman3} we see that $G_2(q)<{^3D_4}(q)$ for all $q$.
Furthermore \cite{atlas} and \cite{kleidman2} imply that $\PSU_3(3) = G_2(2)' < G_2(q)$ for all $q$, and the result follows from Lemmas~\ref{l: subgroup obstruction} and \ref{l: unitary3}.

If $G$ is not simple, then $G=2\cprod G_2(4)$ or $3\cprod G_2(3)$. In both cases $G$ contains a subgroup
isomorphic to $\PSU_3(3)$ and the result follows as before.
\end{proof}

\begin{prop}\label{p: the rest}
Let $G$ be a quasisimple group with $G/Z(G)$ isomorphic to one of $F_4(q)$, ${^2E_6}(q)$, $E_6(q)$, $E_7(q)$ or $E_8(q)$.
Then $\Gammaone(G)$ is not perfect.
\end{prop}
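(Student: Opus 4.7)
\medskip

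\noindent \textbf{Proof plan.} The plan is to follow the strategy used throughout this section: in each case exhibit a subgroup of $G$ whose commuting graph is already known to be non-perfect, and apply Lemma~\ref{l: subgroup obstruction}. The most uniform way to do this is to locate a quasisimple cover of $G_2(q)$ inside $G$ and invoke Proposition~\ref{p: g2}.

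Concretely, I would establish the following chain of containments at the level of $G/Z(G)$. First, $F_4(q)$ contains $G_2(q)$: the $B_4$-type subsystem subgroup of $F_4(q)$ is a quasisimple cover of $\POmega_9(q)$, and $G_2(q)$ embeds in this cover via its action on the seven-dimensional orthogonal module. Next, each of $E_6(q)$ and ${}^2E_6(q)$ contains $F_4(q)$: the former as the fixed-point subgroup of the graph automorphism of $E_6$, the latter as the subgroup of graph-field invariants defining the twisted form. Finally, $E_7(q)$ contains $E_6(q)$ as the derived group of a Levi factor of a maximal parabolic, and $E_8(q)$ contains $E_7(q)$ via the $E_7 A_1$ subsystem subgroup. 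Composing these yields $G_2(q) \le G/Z(G)$ in every case.

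To pass to the quasisimple cover $G$: let $H \cong G_2(q)$ be the subgroup of $G/Z(G)$ just identified and let $\tilde H$ denote its preimage in $G$. Since $G_2(q)$ is perfect, the derived subgroup $[\tilde H,\tilde H]$ surjects onto $G_2(q)$ with central kernel, and hence is itself a quasisimple cover of $G_2(q)$. By Proposition~\ref{p: g2} its commuting graph is non-perfect, and Lemma~\ref{l: subgroup obstruction} then gives the result.

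The main obstacle is the verification of the subgroup containments asserted in the second paragraph. In particular one must confirm that the stated embeddings are defined over $\F_q$ (rather than merely over the algebraic closure), and take care in the twisted case of ${}^2E_6(q)$ to choose the embedding of $F_4(q)$ compatibly with the graph-field involution. These facts are standard, however, and may be located in \cite[Chapter~4]{kl} and the literature on maximal subgroups of exceptional groups of Lie type; no new commuting-graph arguments are required.
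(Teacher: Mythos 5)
Your overall strategy is the same as the paper's (find a seed subgroup with non-perfect commuting graph, chain subgroup containments through the exceptional groups, and pass to arbitrary quasisimple covers), but the details differ in an instructive way. The paper's seed is ${}^3D_4(q)$ rather than $G_2(q)$: it quotes the chain ${}^3D_4(q)<F_4(q)<E_6(q),\,{}^2E_6(q)$ from Liebeck--Saxl, notes that the universal $E_6(q)$ sits inside the adjoint $E_7(q)$ and the universal $E_7(q)$ inside the adjoint $E_8(q)$, and then uses the fact that ${}^3D_4(q)$ has \emph{trivial Schur multiplier} to conclude that every quasisimple cover of each adjoint group contains a subgroup literally isomorphic to ${}^3D_4(q)$. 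That choice of seed is precisely what makes the passage to covers painless. Your alternative --- taking the preimage $\tilde H$ of $H\cong G_2(q)$ and extracting a quasisimple cover of $G_2(q)$ from its derived subgroup --- is sound for $q\ge 3$ (note that $[\tilde H,\tilde H]$ is genuinely perfect here, since $\tilde H=[\tilde H,\tilde H]Z$ forces $[\tilde H,\tilde H]=[[\tilde H,\tilde H],[\tilde H,\tilde H]]$), and it buys uniformity: you never need to know which exceptional multipliers occur. Your geometric justification of $G_2(q)<F_4(q)$ via the $B_4$ subsystem is also a reasonable substitute for the paper's citation.

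There is, however, a genuine (if small) gap at $q=2$. You assert ``since $G_2(q)$ is perfect, $[\tilde H,\tilde H]$ surjects onto $G_2(q)$,'' but $G_2(2)$ is \emph{not} perfect: $G_2(2)'\cong \PSU_3(3)$ has index $2$. Consequently there is no quasisimple cover of $G_2(2)$ at all, and Proposition~\ref{p: g2} cannot be invoked in the form you use it (its hypothesis $G/Z(G)\cong G_2(q)$ is vacuous for $q=2$, which is why the paper's table excludes $G_2(2)$ from the simple groups). Since $F_4(2)$, $E_6(2)$, ${}^2E_6(2)$, etc.\ do have nontrivial Schur multipliers, the case $q=2$ cannot be waved away. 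The fix is easy: for $q=2$ run the same preimage-and-derived-series argument on $\PSU_3(3)=G_2(2)'$, which is perfect with trivial Schur multiplier, and conclude via Lemma~\ref{l: unitary3}; alternatively, switch your seed to ${}^3D_4(q)$ as the paper does, which is simple with trivial multiplier for every $q\ge 2$ and avoids the issue entirely.
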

\begin{proof}
Referring to \cite{liebecksaxl} we see that
\[
{^3D_4}(q) < F_4(q) < {E_6}(q),\ {^2E_6}(q).
\]
Furthermore, the
\emph{universal} version of $E_6(q)$ is a subgroup of the \emph{adjoint} version of $E_7(q)$, and likewise the universal version of 
$E_7(q)$ is a subgroup of the adjoint version of $E_8(q)$. Since the Schur multiplier of ${^3D_4}(q)$ is trivial 
we conclude that all quasisimple
covers of the (simple) adjoint versions of $F_4(q), {^2E_6}(q), {E_6}(q), E_7(q)$ and $E_8(q)$ contain a subgroup
isomorphic to ${^3D_4}(q)$, and the result follows from Proposition~\ref{p: g2}.
\end{proof}

\subsection{Sporadic groups}

\begin{prop}\label{p: sporadic}
 If $G$ is a quasisimple group with $G/Z(G)$ isomorphic to a sporadic simple group, then $\Gammaone(G)$ is not perfect.
\end{prop}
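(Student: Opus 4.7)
The strategy follows the pattern of this section: for each sporadic simple group $S$, I will exhibit a subgroup $H \le S$ whose commuting graph has been shown non-perfect in an earlier lemma, and invoke Lemma~\ref{l: subgroup obstruction}. To handle a quasisimple cover $G$ of $S$, I then lift a forbidden subgraph from $S$ to $G$ using Lemma~\ref{l: pass to qs}.

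First I would consult the maximal subgroups listed in \cite{atlas} and select a suitable $H$ for each sporadic group. Many of the sporadic groups contain one of $\PSL_2(p)$ for an odd prime $p \ge 11$, $\Alt_n$ for some $n \ge 7$, $\PSU_3(3)$, or $\PSL_3(3)$; any of these forces a non-perfect commuting graph via Lemmas~\ref{l: psl2 first}, \ref{l: alternating}, \ref{l: unitary3} and \ref{l: sl3} respectively. A convenient shortcut is that every Mathieu group contains $M_{11}$ (which contains $\PSL_2(11)$), so the Mathieu subgroups embedded in larger sporadics propagate the obstruction efficiently through the list; the remaining Janko, Conway, Fischer, Higman--Sims, McLaughlin, Suzuki, O'Nan, Held, Rudvalis, Harada--Norton, Lyons, Thompson, Baby Monster and Monster groups all contain a Mathieu subgroup, a copy of $\PSU_3(3)$, or an alternating group of degree at least $7$.

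For the quasisimple covers, whose centres have order dividing~$12$, I would aim to choose the forbidden subgraph $\Delta \subseteq \Gammaone(S)$ so that its vertices have order coprime to $|Z(G)|$; the remark following Lemma~\ref{l: pass to qs} then yields a forbidden subgraph in $\Gammaone(G)$. In practice such a $\Delta$ can frequently be located inside a subgroup $\PSL_2(p)$ with $p \in \{13, 17\}$, using the odd cycles of length $(p+1)/2$ built at the end of the proof of Lemma~\ref{l: psl2 first}, or inside an $\Alt_7$ subgroup, where suitable cycles on elements of order coprime to $|Z(G)|$ can be assembled. When no such convenient $\Delta$ is available for one of the groups with largest multipliers (for instance the covers of $M_{22}$, $Suz$ or $Fi_{22}$), a direct computation in GAP \cite{GAP} or Magma \cite{Magma} supplies the required forbidden subgraph.

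The main obstacle is the tedium of a $26$-fold case analysis together with the covers, rather than any conceptual difficulty: once the ATLAS subgroup data is in hand, each case either follows immediately from an inclusion of a previously-treated subgroup, or is verified by a short computer calculation.
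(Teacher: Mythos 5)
Your overall strategy for the $26$ simple sporadic groups --- hunt for a subgroup already known to have a non-perfect commuting graph and apply Lemma~\ref{l: subgroup obstruction} --- is exactly what the paper does, and the pool of subgroups you name ($\Alt_n$ for $n\ge 7$, $\PSU_3(3)$, $\PSL_2(p)$, Mathieu groups) is essentially the paper's list. One factual slip: it is not true that every Mathieu group contains $\Mat_{11}$; in particular $\Mat_{22}$ does not (its maximal subgroups are $\PSL_3(4)$, $2^4{:}\Alt_6$, $\Alt_7$, etc.), so that shortcut fails for $\Mat_{22}$, though the $\Alt_7$ subgroup covers it.

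The genuine gap is in your treatment of the quasisimple covers, and specifically of $G$ with $G/Z(G)\cong \Mat_{22}$, which is the one hard case (the Schur multiplier is $C_{12}$). Your plan is to lift a forbidden subgraph of $\Gamma(S)$ whose vertices have order coprime to $|Z(G)|$ via Lemma~\ref{l: pass to qs}. For $12\cprod\Mat_{22}$ this provably cannot work: the elements of $\Mat_{22}$ of order coprime to $12$ have orders $5$, $7$ or $11$, and all of these have abelian (indeed cyclic) centralizers, so by Lemma~\ref{l: abelian centralizers} none of them lies on any forbidden subgraph. (Your other suggestion, the cycles in $\PSL_2(13)$ and $\PSL_2(17)$, is also unhelpful here: those cycles consist of involutions, so they are not coprime to an even centre, and these groups are not subgroups of the relevant sporadics in any case.) You fall back on ``a direct computation in GAP or Magma,'' which is not wrong in principle but misses the structural argument the paper gives: since $\Alt_7<\Mat_{22}$ and the only cover of $\Alt_7$ with perfect commuting graph is $6\cprod\Alt_7$, a perfect cover $G$ of $\Mat_{22}$ would have to contain $6\cprod\Alt_7$, forcing $|Z(G)|\in\{6,12\}$; an element-order computation (elements of order $4$ in $\Mat_{22}$ do not lift to elements of order $24$ in $6\cprod\Mat_{22}$, but they do in $6\cprod\Alt_7$) rules out $|Z(G)|=6$; and finally in $12\cprod\Mat_{22}$ the maximal subgroup $\PSL_3(4)$ lifts to a cover with cyclic centre of order divisible by $4$, which has non-perfect commuting graph by Lemma~\ref{l: l34}. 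Without some argument of this kind (or an explicit machine verification you have not actually carried out), the covers of $\Mat_{22}$ remain unproved in your write-up. The covers of $\mathrm{Suz}$ and $\mathrm{Fi}_{22}$, which you also defer to the computer, are in fact handled cleanly by subgroup inclusions (covers of $G_2(4)$, and ${}^2F_4(2)'$, respectively).
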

\begin{proof}

Our strategy here is to find, for each sporadic simple group, a subgroup which has already been shown to have
non-perfect commuting graph. The result will then follow from Lemma~\ref{l: subgroup obstruction}.
Our essential reference is \cite{atlas}, which provides lists of maximal subgroups of these groups.
For reasons of transparency, we use only subgroup inclusions which are immediately visible from the structural
information these lists provide (though the subgroups need not themselves be maximal).

We deal first with the simple groups. We have the following subgroup inclusions.
\begin{eqnarray*}
   \Sym_5 &<& \Mat_{11},\ \Mat_{12},\ \mathrm{Th},\ \mathrm{B},\ \mathrm{M},\\
   \Alt_7 &<& \Mat_{22},\ \Mat_{23},\ \Mat_{24},\ \mathrm{HS},\ \mathrm{McL},\ \mathrm{Co}_1,\
   \mathrm{Fi}_{23},\ \mathrm{Fi}_{24}',\ \mathrm{O'N},\\
   \Alt_8 &<& \mathrm{Ru}, \\
   \Mat_{12} &<& \mathrm{Suz},\ \mathrm{Fi}_{22},\ \mathrm{HN},\\
   \Mat_{23} &<& \mathrm{Co}_3,\ \mathrm{Co}_2,\\
   \Mat_{24} &<& \mathrm{J}_4,\\
   \PSL_2(11) &<& \mathrm{J}_1,\\
   \PSL_2(19) &<& \mathrm{J}_3,\\
   \PSU_3(3) &<& \mathrm{J}_2,\\
   \Sp_4(4) &<& \mathrm{He},\\
    G_2(5) &<& \mathrm{Ly}.
\end{eqnarray*}

We deal now with the case that $G$ is non-simple, the following subgroup inclusions cover most possibilities.
 \begin{eqnarray*}
   \Mat_{11} &<& 2\cprod \Mat_{12},\ 2\cprod \mathrm{HS},\ 3\cprod \mathrm{McL},\ 3\cprod \mathrm{O'N}, \\
   \PSU_3(3) &<& 2\cprod \mathrm{J}_2, \\
   \PSL_2(19) &<& 3\cprod \mathrm{J}_3, \\
   {^2F_4}(2)' &<& 2\cprod \mathrm{Ru},\ 2\cprod \mathrm{Fi}_{22},\ 3\cprod \mathrm{Fi}_{22},\
   6\cprod \mathrm{Fi}_{22},\ 2\cprod \mathrm{B},\\
   \mathrm{Co}_2 &<& 2\cprod \mathrm{Co}_1, \\
   \mathrm{Fi}_{23} &<& 3\cprod\mathrm{Fi}_{24}'.
 \end{eqnarray*}
In addition all quasisimple covers of $\mathrm{Suz}$ contain a quasisimple cover of $G_2(4)$.

We are left with the possibility that $G/Z(G)\cong \Mat_{22}$. Note that the simple group
$\Mat_{22}$ contains a subgroup
isomorphic to $\Alt_7$ and all quasisimple covers of $\Alt_7$ have non-perfect commuting graph, except $6\cprod \Alt_7$.
Thus, for $\Gammaone(G)$ to be perfect, the subgroup $\Alt_7$ in $\Mat_{22}$ must lift to a subgroup
$6\cprod \Alt_7$ in $G$. This implies immediately that $|Z(G)| =6$ or $12$. Now we consult
\cite{atlas} to see that elements of order $4$ in $\Mat_{22}$ do not lift to elements of order $24$ when
$|Z(G)|=6$. Since elements of order $4$ in $\Alt_7$ lift to elements of order $24$ in $6\cprod \Alt_7$
we conclude that $6\cprod \Mat_{22}$ does not contain $6\cprod \Alt_7$. Thus we must have $G=12\cprod \Mat_{22}$.

Now we refer to \cite[Table 1]{hhm}, to see that a maximal subgroup of
$\Mat_{22}$ which is isomorphic to $\PSL_3(4)$ lifts in $12\cprod \Mat_{22}$ to a cover whose centre is cyclic and has order divisible by $4$.
All such covers of $\PSL_3(4)$ have non-perfect commuting graph and the result
follows by Lemma~\ref{l: l34}.

\end{proof}

\section{Components in finite groups}\label{s: general}

In this section we prove Theorem~\ref{t: general} and Corollary~\ref{c: ac}.
The next result is required for the proof of Theorem~\ref{t: general}, and
also illustrates a diagrammatic method we have found helpful.

\begin{prop}\label{p: at most 2 components}
\begin{enumerate}
\item Let $K$, $L$ and $M$ be finite non-abelian groups. Then $\Gammaone(K\times L\times M)$ is not perfect.
\item Let $K,L,M$ be three distinct finite non-abelian subgroups of a group $G$, each of which centralizes the other two.
Then $\Gammaone(G)$ is not perfect.
\end{enumerate}
\end{prop}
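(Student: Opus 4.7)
My plan is to handle (ii) first; part (i) will then follow by applying (ii) to $G = K\times L\times M$ with the three canonical direct-product factors taken as the pairwise-centralizing nonabelian subgroups.

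For (ii) I will exhibit an induced $5$-cycle in $\Gammaone(G)$, which by the Strong Perfect Graph Theorem is a forbidden subgraph, so $\Gammaone(G)$ is not perfect. Since each of $K$, $L$, $M$ is nonabelian I choose non-commuting pairs $a,b\in K$, $c,d\in L$ and $e,f\in M$, and consider the five elements
\[
v_1 = a,\qquad v_2 = c,\qquad v_3 = be,\qquad v_4 = bd,\qquad v_5 = df
\]
of $G$. Writing each $v_i$ as $k_i\ell_i m_i$ with $k_i\in K$, $\ell_i\in L$, $m_i\in M$, the pairwise-centralizing hypothesis yields the identity
\[
[v_i,v_j] = [k_i,k_j]\,[\ell_i,\ell_j]\,[m_i,m_j],
\]
in which the three factors lie respectively in $K$, $L$, $M$.

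Reading the components straight from the definitions shows that for every consecutive pair $\{i,i+1\}$ (indices mod $5$) each factor-commutator collapses, since in each factor the two arguments either coincide or one of them is the identity; hence $v_i$ and $v_{i+1}$ commute. For each of the five non-consecutive pairs exactly one factor-commutator survives, and it equals one of $[a,b]$, $[c,d]$ or $[e,f]$; by our choice of generators this is nontrivial, so $v_i$ and $v_j$ do not commute.

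It remains to see that $v_1,\dots,v_5$ are five distinct non-central elements of $G$, hence genuine vertices of $\Gammaone(G)$. Non-centrality is immediate, because each $v_i$ fails to commute with at least one other $v_j$. Distinctness of a non-consecutive pair is immediate too, since equal elements commute. Distinctness of a consecutive pair reduces in each case to an intersection fact of the form $X\cap YZ \le Z(X)$, valid whenever $Y$ and $Z$ centralize $X$: for example, $v_5=v_1$ forces $a=df \in K\cap LM \le Z(K)$, contradicting $[a,b]\ne 1$, and the other four consecutive cases are entirely analogous. The whole argument is essentially bookkeeping; the main challenge is just keeping track of ten pairs, but the central-product commutator identity turns each individual check into a one-line calculation.
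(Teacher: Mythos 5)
Your proof is correct and follows essentially the same route as the paper: both exhibit an explicit induced $5$-cycle whose vertices are products of a non-commuting pair chosen from each of $K$, $L$ and $M$, with the identity $[v_i,v_j]=[k_i,k_j][\ell_i,\ell_j][m_i,m_j]$ doing the bookkeeping (your checks of distinctness and non-centrality are all valid). The only difference is organizational: you prove (ii) directly and deduce (i), whereas the paper constructs the $5$-cycle in $K\times L\times M$ and pushes it forward along $(x,y,z)\mapsto xyz$.
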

\begin{proof}
\begin{enumerate}
\item Define $(k,\distinguish{k})$, $(\ell,\distinguish{\ell})$ and $(m,\distinguish{m})$ to be pairs of non-commuting elements from
$K$, $L$ and $M$ respectively. Now the five elements
\begin{equation}\label{e: 5-cycle}
(1,l,m), \, (k', 1, 1), \, (1, l', 1), \, (k,1,m'), \, (k,l,1),
\end{equation}
induce a 5-cycle in $\Gamma(K\times L\times M)$ and we are done.

\item If $K$, $L$ and $M$ are subgroups of $G$ which centralize one another,
then there is a natural homomorphism $K\times L\times M\to G$ given by $(x,y,z)\mapsto xyz$.
It is easy to check that the images under this map of the five elements constructed in part (i),
induce a $5$-cycle in $\Gammaone(G)$.
\end{enumerate}
\end{proof}

Before we proceed, let us take a moment to understand more clearly why the elements listed at \eqref{e: 5-cycle}
induce a 5-cycle. To do this we refer to Figure~\ref{f: 2components} in which we draw the commuting graphs of the
three projections of the listed tuples. Note that we maintain the same orientation for each graph, so that the
vertex corresponding to the entry from the first tuple is at the `east' of the graph, and entries from the following
tuples are written anticlockwise
around the graph. Now it is clear that the commuting graph of the three $5$-tuples listed at \eqref{e: 5-cycle}
has edges between two vertices precisely when all three projections have edges between the corresponding
vertices. This observation immediately implies that the tuples listed at \eqref{e: 5-cycle} form a $5$-cycle,
as required. In the arguments below we shall use the same convention for representing projections.

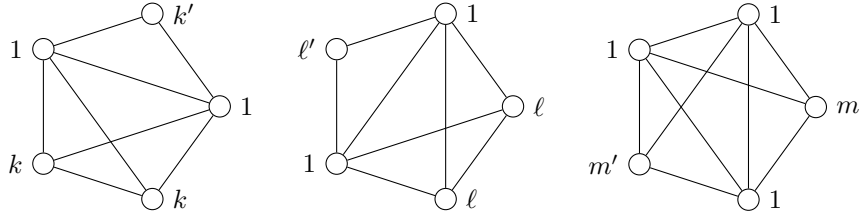
\begin{figure}[!ht]
\centering
\begin{tabular}{ccc}

\newcount\mycount

\begin{tikzpicture}[scale=0.24]
  \node[draw,circle,inner sep=0.10cm, label={right:$1$}] (N-1) at (0:5.4cm) {};
  \node[draw,circle,inner sep=0.10cm, label={right:$\distinguish{k}$}] (N-2) at (72:5.4cm) {};
 \node[draw,circle,inner sep=0.10cm, label={left:$1$}] (N-3) at (144:5.4cm) {};
 \node[draw,circle,inner sep=0.10cm, label={left:$k$}] (N-4) at (216:5.4cm) {};
 \node[draw,circle,inner sep=0.10cm, label={right:$k$}] (N-5) at (288:5.4cm) {};
 \path (N-1) edge (N-2);
  \path (N-2) edge (N-3);
  \path (N-3) edge (N-4);
  \path (N-4) edge (N-5);
  \path (N-5) edge (N-1);
\path (N-1) edge (N-3);
\path (N-3) edge (N-5);
\path (N-4) edge (N-1);
      \end{tikzpicture}

    &
  \newcount\mycount
\begin{tikzpicture}[scale=0.24]
  \node[draw,circle,inner sep=0.10cm, label={right:$\ell$}] (N-1) at (0:5.4cm) {};
  \node[draw,circle,inner sep=0.10cm, label={right:$1$}] (N-2) at (72:5.4cm) {};
 \node[draw,circle,inner sep=0.10cm, label={left:$\distinguish{\ell}$}] (N-3) at (144:5.4cm) {};
 \node[draw,circle,inner sep=0.10cm, label={left:$1$}] (N-4) at (216:5.4cm) {};
 \node[draw,circle,inner sep=0.10cm, label={right:$\ell$}] (N-5) at (288:5.4cm) {};
 \path (N-1) edge (N-2);
  \path (N-2) edge (N-3);
  \path (N-3) edge (N-4);
  \path (N-4) edge (N-5);
  \path (N-5) edge (N-1);
\path (N-2) edge (N-4);
\path (N-5) edge (N-2);
\path (N-4) edge (N-1);
      \end{tikzpicture}

    &
  \newcount\mycount
\begin{tikzpicture}[scale=0.24]
  \node[draw,circle,inner sep=0.10cm, label={right:$m$}] (N-1) at (0:5.4cm) {};
  \node[draw,circle,inner sep=0.10cm, label={right:$1$}] (N-2) at (72:5.4cm) {};
 \node[draw,circle,inner sep=0.10cm, label={left:$1$}] (N-3) at (144:5.4cm) {};
 \node[draw,circle,inner sep=0.10cm, label={left:$\distinguish{m}$}] (N-4) at (216:5.4cm) {};
 \node[draw,circle,inner sep=0.10cm, label={right:$1$}] (N-5) at (288:5.4cm) {};
 \path (N-1) edge (N-2);
  \path (N-2) edge (N-3);
  \path (N-3) edge (N-4);
  \path (N-4) edge (N-5);
  \path (N-5) edge (N-1);
\path (N-1) edge (N-3);
\path (N-2) edge (N-4);
\path (N-3) edge (N-5);
\path (N-5) edge (N-2);
      \end{tikzpicture}

\end{tabular}
 \caption{Three projections needed for Proposition~\ref{p: at most 2 components} (i)}
\label{f: 2components}
 \end{figure}

Our next result is in similar vein and to state it we need some terminology: We say that $\Gammaone(G)$ {\it contains
a $4$-chain} if there is an induced subgraph of $\Gammaone(G)$ isomorphic to a path graph on four vertices. 

\begin{prop}\label{p: 4chain use}
Let $K$ and $L$ be subgroups of a group $G$ such that $\Gammaone(K)$ contains a $4$-chain, $L$ is non-abelian, and
$K$ and $L$ centralize one another. Then $\Gammaone(G)$ is not perfect.
\end{prop}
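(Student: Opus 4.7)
The plan is to exhibit an induced $5$-cycle in $\Gammaone(G)$, generalising the diagrammatic method used in Proposition~\ref{p: at most 2 components}. The key observation is that a $4$-chain in $\Gammaone(K)$ carries three non-commuting pairs in $K$, namely the three non-adjacent pairs among its four vertices; combined with the single non-commuting pair from $L$, this supplies enough non-commuting relations to realise the five non-edges of a $5$-cycle, provided the $L$-pair is reused across two adjacent vertex positions so that it contributes two non-edges simultaneously.

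Let $w_1, w_2, w_3, w_4$ be the vertices of a $4$-chain in $\Gammaone(K)$, indexed so that $w_i$ and $w_j$ commute if and only if $|i-j|\le 1$; the non-commuting pairs in $K$ are then $\{w_1,w_3\}$, $\{w_2,w_4\}$ and $\{w_1,w_4\}$. Choose $\ell, \ell' \in L$ with $[\ell, \ell'] \neq 1$, which exist since $L$ is non-abelian. First I would consider the five elements
\[
v_1 = w_1,\qquad v_2 = w_2 \ell',\qquad v_3 = w_3 \ell',\qquad v_4 = w_4,\qquad v_5 = \ell
\]
of $G$. Since $K$ and $L$ centralize each other, the identity $[xy, x'y'] = [x, x'][y, y']$ holds for all $x, x' \in K$ and $y, y' \in L$, and so every commutator $[v_i, v_j]$ is a product of a $K$-commutator and an $L$-commutator. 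A pair-by-pair check shows that the five consecutive pairs along $v_1$--$v_2$--$v_3$--$v_4$--$v_5$--$v_1$ have both their $K$- and $L$-factors equal to $1$ (and so give edges), while each of the remaining five pairs has exactly one of the two factors trivial and the other non-trivial (and so gives a non-edge): the non-edges $(v_1,v_3), (v_1,v_4), (v_2,v_4)$ come from the three non-commuting pairs of the $4$-chain, and $(v_2,v_5), (v_3,v_5)$ come from $[\ell', \ell] \neq 1$. As in the diagrammatic representation used earlier, one may visualise this via the two projections of the $v_i$ onto $K$ and $L$: the edge $(v_i,v_j)$ is present precisely when both projections show an edge, which makes the verification transparent.

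Two small cleanup points remain. Firstly, each $v_i$ has a non-neighbour in the configuration and hence lies outside $Z(G)$. Secondly, pairwise distinctness of the $v_i$ follows from the distinctness of the $w_i$ together with the fact that $\ell, \ell' \notin K$; this last fact holds because $K \cap L \subseteq Z(L)$ (as $K$ centralizes $L$), whereas $\ell, \ell' \notin Z(L)$. Consequently $\Gammaone(G)$ contains an induced $5$-cycle, so by the Strong Perfect Graph Theorem it is not perfect. The only non-routine point in the argument is the design of the configuration: one must place $\ell'$ at two cyclically adjacent positions and $\ell$ at a third so that the single $L$-non-edge contributes the two cycle-non-edges $(v_2, v_5)$ and $(v_3, v_5)$, while leaving the remaining three to be covered by the three non-commuting pairs of the $4$-chain.
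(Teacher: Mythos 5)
Your construction is exactly the one in the paper: the five elements $w_1,\ w_2\ell',\ w_3\ell',\ w_4,\ \ell$ coincide (up to relabelling $\ell\leftrightarrow\ell'$) with the paper's $k_1,\ k_2\ell,\ k_3\ell,\ k_4,\ \distinguish{\ell}$, and your pair-by-pair verification via the two projections is the same argument the paper presents diagrammatically. The proof is correct and essentially identical to the paper's.
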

\begin{proof}
\begin{figure}[!ht]
\centering
\begin{tabular}{cc}
\begin{tikzpicture}[scale=0.3]
  \node[draw,circle,inner sep=0.10cm, label={right:$k_4$}] (N-1) at (0:5.4cm) {};
  \node[draw,circle,inner sep=0.10cm, label={right:$1$}] (N-2) at (72:5.4cm) {};
 \node[draw,circle,inner sep=0.10cm, label={left:$k_1$}] (N-3) at (144:5.4cm) {};
 \node[draw,circle,inner sep=0.10cm, label={left:$k_2$}] (N-4) at (216:5.4cm) {};
 \node[draw,circle,inner sep=0.10cm, label={right:$k_3$}] (N-5) at (288:5.4cm) {};
 \path (N-1) edge (N-2);
  \path (N-2) edge (N-3);
  \path (N-3) edge (N-4);
  \path (N-4) edge (N-5);
  \path (N-5) edge (N-1);
\path (N-2) edge (N-4);
\path (N-5) edge (N-2);
      \end{tikzpicture}

      &

\begin{tikzpicture}[scale=0.3]
  \node[draw,circle,inner sep=0.10cm, label={right:$1$}] (N-1) at (0:5.4cm) {};
  \node[draw,circle,inner sep=0.10cm, label={right:$\distinguish{\ell}$}] (N-2) at (72:5.4cm) {};
 \node[draw,circle,inner sep=0.10cm, label={left:$1$}] (N-3) at (144:5.4cm) {};
 \node[draw,circle,inner sep=0.10cm, label={left:$\ell$}] (N-4) at (216:5.4cm) {};
 \node[draw,circle,inner sep=0.10cm, label={right:$\ell$}] (N-5) at (288:5.4cm) {};
 \path (N-1) edge (N-2);
  \path (N-2) edge (N-3);
  \path (N-3) edge (N-4);
  \path (N-4) edge (N-5);
  \path (N-5) edge (N-1);
\path (N-1) edge (N-3);
\path (N-3) edge (N-5);
\path (N-4) edge (N-1);
      \end{tikzpicture}

\end{tabular}
 \caption{\label{f: five}Two projections needed for Proposition~\ref{p: 4chain use}}
 \end{figure}
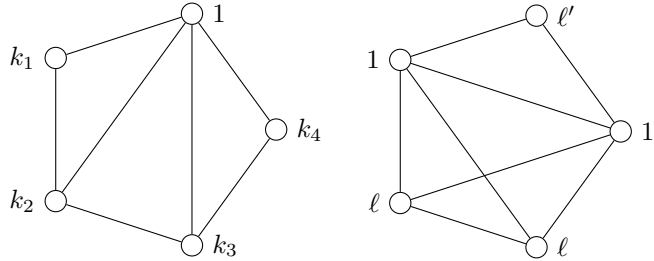

Let $k_1,k_2,k_3,k_4$ be the vertices of a $4$ chain in $K$, and let $\ell,\distinguish{\ell}$ be
non-commuting elements in $L$. Then the projection graphs in Figure~\ref{f: five} illustrate that the five elements
\[
k_1,\ k_2\ell,\ k_3\ell,\ k_4,\ \distinguish{\ell}
\]
in $KL$ induce a $5$-cycle in $\Gammaone(G)$.
\end{proof}

\begin{lem}\label{l: 4chain}
\begin{enumerate}
\item If $G$ is isomorphic to one of the groups in the following list, then $G$ contains a $4$-chain:
\[
\Alt_6,\ 3\cprod \Alt_6,\ 6\cprod \Alt_7,\ \PSL_3(2),\ \PSL_3(4),\ 2\cprod\PSL_3(4),\ 3\cprod\PSL_3(4),\
(2\times 2)\cprod\PSL_3(4),\ 6\cprod\PSL_3(4),\ (6\times 2)\cprod\PSL_3(4).
\]
\item If $G$ is a quasisimple group such that $\Gammaone(G)$ is
perfect and contains a $4$-chain, then $G$ is one of the groups listed in (i).
\end{enumerate}
\end{lem}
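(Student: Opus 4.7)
For part (i), the plan is to exhibit a 4-chain explicitly in each listed group. For $\PSL_3(2)$ and $\PSL_3(4)$ I use transvections: recall from Lemma~\ref{l: sl34} that two transvections commute if and only if they share their fixed hyperplane $H$ or their image line $L$, so it suffices to pick four transvections $t_1, t_2, t_3, t_4$ such that each consecutive pair shares exactly one of $H$ or $L$ while no non-consecutive pair shares either. For $\Alt_6$ one takes, for instance, the involutions $(1\,2)(3\,4),\ (3\,4)(5\,6),\ (3\,5)(4\,6),\ (1\,2)(3\,5)$ and verifies directly that the induced subgraph is a path on four vertices. For the covers $3\cprod\Alt_6$ and $n\cprod\PSL_3(4)$ with $n\in\{2,3,2\times 2,6,6\times 2\}$, one lifts the chain by choosing specific preimages: the chosen elements in the simple quotient all project from involutions whose preimages in each cover have distinct orders (hence are pairwise non-conjugate), so Lemma~\ref{l: pass to qs} implies that the induced subgraph on the chosen lifts is isomorphic to the original $P_4$. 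For $6\cprod\Alt_7$ one confirms the existence of a 4-chain by a direct computation.

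For part (ii), Theorem~\ref{t: main} reduces the problem to the quasisimple groups with perfect commuting graph, and the groups not appearing in list (i) are precisely those enumerated in Theorem~\ref{t: general}(i). For $\SL_2(q)$ and $6\cprod\Alt_6\cong 3\cprod\SL_2(9)$, which are AC-groups by Corollary~\ref{c: ac}(i), commuting is transitive on non-central elements and $\Gammaone(G)$ decomposes as a disjoint union of cliques, so there is no induced $P_4$. For the Suzuki groups $\Sz(q)$ and their covers $2\cprod\Sz(8)$ and $(2\times 2)\cprod\Sz(8)$, the key observation (from the proofs of Proposition~\ref{p: suzuki} and Lemma~\ref{l: suzuki exceptional}) is that every non-central element with non-abelian centralizer is an involution, and the centralizer of such an involution is the unique Sylow $2$-subgroup $P$ containing it, with the involutions of $P$ forming the centre of $P$. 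In a hypothetical 4-chain $a-b-c-d$ the middle vertices $b, c$ must therefore be commuting involutions, both lying in $Z(P)$ for a common Sylow 2-subgroup $P$; this gives $\Cent_G(b) = P = \Cent_G(c)$, which forces $a \in \Cent_G(c)$ and contradicts the definition of a 4-chain.

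The main obstacle is the remaining cases $(4\times 4)\cprod\PSL_3(4)$ and $(12\times 4)\cprod\PSL_3(4)$. By Lemma~\ref{l: l34} the commuting graph on transvection-lifts --- the only elements with non-abelian centralizer --- is isomorphic to $105\cdot K_3$. Hence in any 4-chain the middle vertices $b, c$ must lie in a common triangle $\{b,c,e\}$, while the outer vertices $a, d$ must have abelian centralizer (a transvection-lift commuting with $b$ would lie in the same triangle as $b$, and so would also commute with $c$). The crucial remaining claim is $\Cent_G(b) = \Cent_G(c)$, which amounts to showing that the map $h \mapsto [h,c]$ from $\Cent_G(b)$ into $Z(G)$ is identically trivial. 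Writing $\chi_b, \chi_c, \chi_e$ for the twist homomorphisms $\Cent_{\PSL_3(4)}(\bar b) \to Z(G)$ associated to the three triangle elements (which satisfy $\chi_b\chi_c\chi_e = 1$ because $bce \in Z(G)$), and exploiting the fact that in $\SL_3(4)$ the whole triangle lies in the centre of its centralizer, I expect these three twists to collapse to the trivial map, establishing the claim. In practice this last case is perhaps most cleanly confirmed by a direct Magma computation, in line with the paper's general methodology.
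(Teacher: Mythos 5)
Your overall strategy coincides with the paper's: explicit chains of transvections and involutions for part (i), lifted to the covers, and for part (ii) a case division into AC-groups, Suzuki-type groups, and the two large covers of $\PSL_3(4)$. The explicit $\Alt_6$ chain checks out, the transvection configurations exist, and the AC-group and Suzuki arguments are sound and essentially the paper's. But two steps do not work as written. First, your justification for lifting the chains to the covers of $\Alt_6$ and $\PSL_3(4)$ is that the preimages of the chosen involutions ``have distinct orders (hence are pairwise non-conjugate).'' This is false: if $|Z(G)|=2$ the two preimages $u$ and $uz$ of an involution have the same order (both $2$ or both $4$), and if $|Z(G)|=3$ the three preimages have orders $2,6,6$. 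The non-conjugacy needed to invoke Lemma~\ref{l: pass to qs} is true for the covers in question, but it is not an order computation; it comes from the analysis in the proof of Lemma~\ref{l: l34}, namely that for exactly these covers the subgroup $V\cong C_2\times C_2$ of $Z(M)$ dies in the quotient, so $\Gamma(T_G)\cong\Gamma(T)$ and commuting involutions have commuting lifts. This is how the paper proceeds, importing the $\SL_3(2)$ chain through $\SL_3(4)=3\cprod\PSL_3(4)$.

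Second, for $(4\times 4)\cprod\PSL_3(4)$ and $(12\times 4)\cprod\PSL_3(4)$ you have correctly isolated the delicate point --- one must show $\Cent_G(b)=\Cent_G(c)$ for triangle-mates $b,c$, because the endpoints of a putative $4$-chain need not lie in $T_G$; the paper itself elides this when it asserts that a $4$-chain in $\Gammaone(G)$ would yield one in $\Gamma(T_G)$. But the mechanism you propose is wrong: the twist homomorphisms cannot all be trivial, since if $\chi_b$ were trivial then every lift of every element of $\Cent_{\PSL_3(4)}(\bar b)$ would commute with $b$ and $\Gamma(T_G)$ would contain copies of $K_{15}$ rather than disjoint triangles, contradicting Lemma~\ref{l: l34}; and the relation $\chi_b\chi_c\chi_e=1$ only expresses one map in terms of the other two. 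What is actually true is that $c$ corresponds to $I+\lambda(b-I)$ for some $\lambda\in\F_4^*$, and the $\F_4$-bilinearity of the commutator form constructed in the proof of Lemma~\ref{l: l34} gives $\chi_c=\lambda\chi_b$ on each of the two elementary abelian subgroups of order $16$ that generate $\Cent_{\PSL_3(4)}(\bar b)$, hence everywhere; since multiplication by $\lambda$ is a bijection of $V$ and $V$ injects into $Z(G)$ for these two covers, the kernels coincide and no vertex can be adjacent to $b$ but not $c$. A Magma check would be acceptable under the paper's conventions, but the theoretical route you sketch would not close the gap.
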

\begin{proof}
\begin{enumerate}
\item If $G\cong \Alt_6$, we can take $g_1=(1\,5)(3\,4), g_2=(1\,5)(2\,6), g_3=(1\,2)(5\,6), g_4=(1\,2)(3\,4)$. If
$G\cong 3\cprod \Alt_6$, then we can take pre-images of these four elements.
If $G=6\cprod \Alt_7$ then we can take pre-images in $G$ of
\[
g_1=(1\,2\,3\,4)(5\,6), g_2=(1\,3)(2\,4), g_3=(5\,6\,7), g_4=(1\,2)(3\,4)(5\,6\,7).
\]
If $G=\SL_3(2)$ then we can take
\[
   g_1=\left(\begin{matrix}
             1 & 0 & 1 \\
             0 & 1 & 1 \\
             0 & 0 & 1
             \end{matrix}\right),\
   g_2=\left(\begin{matrix}
             1 & 0 & 1 \\
             0 & 1 & 0 \\
             0 & 0 & 1
             \end{matrix}\right),\
   g_3=\left(\begin{matrix}
             1 & 1 & 1 \\
             0 & 1 & 0 \\
             0 & 0 & 1
             \end{matrix}\right),\
   g_4=\left(\begin{matrix}
             1 & 0 & 0 \\
             0 & 0 &  1\\
             0 & 1 & 0
             \end{matrix}\right).
\]

For the remaining cases we refer back to the proof of Lemma \ref{l: l34}. It is clearly sufficient to consider the
induced subgraph $\Gamma(T_G)$ introduced there. In fact this graph is the same for any of the extensions of $\PSL_3(4)$
listed here (since the central elementary abelian $2$-subgroup $A$ of the full covering group $M$ of $\PSL_3(4)$ is
contained in the kernel of the quotient homomorphism $M\to G$ in each case.) It is sufficient, therefore, to
find a $4$-chain in any one of these groups. Since $3\cprod\PSL_3(4)$ is isomorphic to $\SL_3(4)$, it contains
$\SL_3(2)$ as a subgroup, and so the four elements given above for $\SL_3(2)$ can be used in this case also.

\item For this part we must show that every group listed in Theorem~\ref{t: main}
but not in Lemma~\ref{l: 4chain} does not contain a $4$-chain. This is obviously the case for all of the
AC-groups, which are listed in Corollary~\ref{c: ac}.

If $G$ is $(4\times 4)\cprod\PSL_3(4)$ or $(12\times 4)\cprod\PSL_3(4)$, and if a $4$-chain existed in $\Gammaone(G)$,
then there would be a $4$-chain in the graph $\Gamma(T_G)$ constructed in the proof of Lemma \ref{l: l34}. But it was
seen in that proof that $\Gamma(T_G)$ is a union of pairwise disconnected triangles,
and so clearly no $4$-chain exists there.

Finally, suppose that $G$ is equal either $\Sz(q)$ for some $q=2^{2n+1}$,
or else to $2\cprod \Sz(8)$ or $(2\times 2)\cprod\Sz(8)$.
In any of these cases $G$ has a single class of non-central elements with non-abelian centralizers, consisting of
involutions. Furthermore, for each involution $g$ in this class,
the involutions commuting with $g$ generate an elementary abelian subgroup of $G$.
From these facts it is clear that $\Gammaone(G)$ can have no $4$-chain.
\end{enumerate}
\end{proof}

\begin{proof}[Proof of Theorem~\ref{t: general}]
Proposition~\ref{p: at most 2 components} tells us that $G$ has at most two components, and
Lemma~\ref{l: subgroup obstruction} implies that the commuting graphs of the components are perfect, and so
each component of $G$ are isomorphic to one of the quasisimple groups listed in Theorem~\ref{t: main}.

Suppose first that $G$ has a unique component $N$ and that case~(i) of the theorem does not hold. Then $G$ appears in the list of Lemma~\ref{l: 4chain}, and so $\Gammaone(G)$ contains a $4$-chain. It follows from Proposition~\ref{p:
4chain use} that no non-abelian subgroup of $G$ can centralize $N$, and so $\Cent_G(N)$ is abelian and (ii) holds.

Next suppose that $G$ has two components $N_1$ and $N_2$. Since $N_1$ and $N_2$ centralize one another,  and
since they are both non-abelian, it follows from Proposition~\ref{p: 4chain use} that neither contains a
$4$-chain.  and that $\Gammaone(N_1)$ contains a $4$-chain. Therefore each is isomorphic to one of the groups
listed in case~(i) of the theorem.

Let $C=\Cent_G(N_1N_2)$. Then $N_1$, $N_2$ and $C$ are three subgroups of $G$ which centralize one another, and
it follows from Proposition~\ref{p: at most 2 components} that one of them is abelian. Since $N_1$ and $N_2$ are
quasisimple, we see that $C$ is abelian and (iii) holds.
\end{proof}

\begin{proof}[Proof of Corollary~\ref{c: ac}]
Lemma~\ref{l: abelian centralizers} implies that any finite AC-group $G$ has a perfect commuting graph.
If $G$ is quasisimple, then $G$ is one of the groups listed in Theorem~\ref{t: main}. It is easy to check
that $\SL_2(q)$ and $6\cprod \Alt_6$ are AC-groups.

We observe that a centreless AC-group has abelian Sylow $p$-subgroups for all $p$. Now $\Sz(2^{2a+1})$,
$\Alt_6$, $\SL_3(2)$ and $\PSL_3(4)$ have non-abelian Sylow $2$-subgroups, and hence they are not AC-groups.
It is not hard to check that if $G$ is isomorphic to $3\cprod \Alt_6$, to a quasisimple cover of $\PSL_3(4)$, or to
a quasisimple cover of $\Sz(q)$, then the centralizer of a non-central involution is non-abelian.
In $6\cprod \Alt_7$ the centralizer
of an element of order $4$ is non-abelian. This is sufficient to prove (i).

To establish (ii), let $G$ be an arbitrary finite AC-group. Since components are non-abelian, and since any two
distinct components centralize one another, it is clear that $G$ must have a unique component $N$.
It is also clear that $N$ must itself be an AC-group, and so $N$ is one
of the groups listed in part (i).
Let $C=\Cent_G(N)$, and let $Z=Z(G)$. Suppose that $g\in C\backslash Z$. Then $N\le \Cent_G(g)$, and since $N$ is
non-abelian we have a contradiction. Hence $\Cent_G(N) = Z$, and so $G/Z$ is isomorphic to a subgroup of $\Aut(N)$.

Suppose first that $N=\SL_2(q)$, and that $G$ contains an element $g$ whose action on $N/Z(N)$ induces a
field automorphism. Then $\Cent_{N/Z(N)}(g)\cong \PSL_2(q_0)$, where $q=q_0^a$ for some $a>1$. If $q$ is even
then $Z(N)$ is trivial and, since $\PSL_2(q_0)$ is non-abelian, we immediately obtain a contradiction. If $q$ is odd, then $|Z(N)|=2$,
and hence $\Cent_N(g)$ contains a subgroup isomorphic to a subgroup of $\PSL_2(q_0)$ of index at most $2$. Once again we
conclude that $\Cent_N(g)$ is non-abelian, which is a contradiction. So $G/Z(G)$ contains no element acting as a field
automorphism, and we conclude that $G/Z(G)$ is isomorphic either to $\PSL_2(q)$, or to an extension of
$\PSL_2(q)$ of degree $2$. So we see that $|G:NZ(G)|\le 2$ in this case.

Suppose next that $N=6\cprod \Alt_6$. We refer to \cite[Table 6.3.1]{gls3}, which asserts that the action of
$\Out(\Alt_6)$ on $Z(N)$ is non-trivial. So if $G/Z\cong \Aut(\Alt_6)$ then $G$ contains an element $g$ which
acts non-trivially on $Z(N)$. Thus not all non-trivial elements of $Z(N)$ are central in $G$. But since all
non-trivial elements of $Z(N)$ have non-abelian centralizer, this is a contradiction. So $G/Z(G)$ is a proper
subgroup of $\Aut(\Alt_6)$, and since $|\Out(\Alt_6)|=4$ we have $|G:NZ(G)|\le 2$ in this case too.
\end{proof}

\section{Improvements}\label{s: extensions}

Improvements on Theorems~\ref{t: main} and \ref{t: general} are certainly possible, and in this final section we discuss some possibilities. 

\subsection{Almost quasisimple groups} An obvious first step would be to extend Theorem~\ref{t: main} to classify almost quasisimple groups with perfect commuting graphs. We recall that an {\it almost quasisimple group} is a group with a single component $N$ and, furthermore, this component $N$ is quasisimple. 

It is an easy matter to use Theorem~\ref{t: main} to write down the almost quasisimple groups that are candidates for having a perfect commuting graph. To do this efficiently we need the notion of isoclinism. Recall, first, the definition of the commutator map:
\[
 [-,-]: G\times G \to G, (x,y) \mapsto x^{-1}y^{-1}xy.
\]
Clearly, we can think of the commutator map as being a function of form $G/Z(G) \times G/Z(G) \to G'$. Two groups $G$ and $H$ are said to be {\it isoclinic} if there are two isomorphisms $\varphi:G/Z(G)\to H/Z(H)$ and $\theta: G'\to H'$ that commute with the two commutator maps, i.e. the following diagram commutes:
\begin{center}
\begin{tikzcd}
G/Z(G)\times G/Z(G) \arrow{r}{[-,-]} \arrow{d}[swap]{\varphi\times\varphi} & G' \arrow{d}{\theta} \\
H/Z(H)\times H/Z(H) \arrow{r}{[-,-]} & H'
\end{tikzcd}
\end{center}

If a group $G$ is perfect, then $G$ has a unique central extension $M.G$ realizing any quotient $M$ of its Schur multiplier. Thus for example there is a unique proper cover $2.A_n$ of the alternating group $A_n$, for $n\geq 5$. For more general groups $G$, the appropriate groups $M.G$ are only unique up to isoclinism. Thus, for example, in our discussion below there may be several almost quasisimple groups $3.S_6$ that we should consider. The next lemma asserts that considering one is enough.

\begin{lem}
 If $G$ and $H$ are isoclinic, then $\Gammaone(G)$ is perfect if and only if $\Gammaone(H)$ is perfect.
\end{lem}
\begin{proof}
 Let $\varphi:G/Z(G)\to H/Z(H)$ and $\theta: G'\to H'$ be the relevant isomorphisms. Suppose that $\Gammaone(G)$ is not perfect, and let $\Lambda=\{g_1,\dots, g_k\}$ be a subset of $G$ such that the induced subgraph on $\Lambda$ is an odd cycle. Now, for $i=1,\dots, k$, let $h_i\in H$ be such that $h_iZ(H)=\varphi(g_i Z(G))$. 
 
 Now observe that, since $\theta^{-1}(1_H)=1_G$, we conclude that $[g_i, g_j]=1$ if and only if $[h_i, h_j]=1$. Thus $\{h_1,\dots, h_k\}$ is an odd cycle and $\Gammaone(H)$ is not perfect.
 
 The same argument with $G$ and $H$ swapped, and $\varphi$ and $\theta$ replaced by $\varphi^{-1}$ and $\theta^{-1}$, proves the converse.
\end{proof}

We are now in a position to list those almost quasisimple groups that may have a perfect commuting graph, and we do this in Table~\ref{t: quasisimple}. Clearly, if an almost quasisimple group is to have perfect commuting graph, then its quasisimple normal subgroup must also have perfect commuting graph, hence the table is broken down into rows according to Theorem~\ref{t: main}. Groups in the central column are prescribed up to isoclinism.

\begin{table}[!htbp]
\centering
\begin{tabular}{lll}
\hline
Component  & Group & Comments \\
\hline
$\PSL_3(2)$ &  $\PGL_2(7)$ & {\bf Non-perfect} by Remark~\ref{r: pgl}. \\
\hline
$A_6$ & $S_6$ & {\bf Non-perfect} by Corollary~\ref{c: symmetric}. \\
& $\PGL_2(9)$ &  {\bf Non-perfect} by Remark~\ref{r: pgl}. \\
& $M_{10}$&  {\bf Perfect}; see comments below. \\
\hline
 $3.A_6$ &  $3.S_6$ &  {\bf Non-perfect} by \cite{GAP}. \\
 & $3.\PGL_2(9)$ & {\bf Non-perfect} by \cite{GAP}. \\
 & $3.M_{10}$ & {\bf Perfect}; see comments below.\\
\hline
$6.A_6$.
&  $6.S_6$ & {\bf Non-perfect} by \cite{GAP}. \\
& $6.\PGL_2(9)$ & {\bf Perfect}; see comments below. \\
& $12.M_{10}$ & {\bf Perfect}; see comments below. \\
\hline
$6.A_7$ & $6.S_7$ & {\bf Non-perfect} by \cite{GAP}. \\
\hline
$2.\Sz(8)$ & None & \\
\hline 
$(2\times 2).\Sz(8)$ & $(2\times 2).\Sz(8).3$ & {\bf Non-perfect} by \cite{GAP}.\\
\hline
${\rm X}.\PSL_3(4)$ & Various & {\bf Inconclusive.} \\
\hline
$\SL_2(q)$ & Various & {\bf Inconclusive.} \\
\hline
$\Sz(q)$ & Various & {\bf Inconclusive.}
\end{tabular}

\caption{\label{t: quasisimple} Almost quasisimple groups whose commuting graph may be perfect}
\end{table}

Some comments about Table~\ref{t: quasisimple} are in order. Note, first, that in the case where an almost quasisimple group $G$ exists, and we have already listed a subgroup of $G$ with non-perfect commuting graph, then $G$ does not appear in the Table~\ref{t: main}. So, for instance, $\PGammaL_2(9)$ is an almost quasisimple group with unique component $A_6$. Since $A_6$ appears in Theorem~\ref{t: main}, we should study $\PGammaL_2(9)$. However $\PGammaL_2(9)$ contains a subgroup isomorphic to $S_6$ which, as we see in the table, has a  non-perfect commuting graph. Hence we may omit $\PGammaL_2(9)$ from the list.

Note, second, that the row starting ${\rm X}.\PSL_3(4)$ references all almost quasisimple groups whose unique component is a quasisimple cover of $\PSL_3(4)$ that occurs in Theorem~\ref{t: main}. Full facts in this situation are unknown, although we remark that $\Gammaone(\PGL_3(4))$ is non-perfect by (an adaptation of) Lemma~\ref{l: sl3}. We also remark that the final two rows of Table~\ref{t: quasisimple} refer to infinite families of groups.

Finally we should justify the assertions in the final column: In the cases where we have written ``{\bf Non-perfect} by \cite{GAP}'', we mean that we have run computations in GAP and found cycles of odd order in the commuting graph of the given group. \footnote{To do this we have made use of presentations found in the online ATLAS of Finite Group Representations \cite{atlas2}; where presentations have not been available in \cite{atlas2}, we have received assistance from Professor J\"urgen M\"uller for which we would like to record our very sincere thanks.}

The cases where the commuting graph is perfect require more explanation: Let $X\in\{1,3\}$; then $\Gammaone(X.M_{10})$ is perfect because $\Gammaone(X.A_6)$ is perfect and all elements in $M_{10}\setminus A_6$ have abelian centralizers. Similarly $\Gammaone(12.M_{10})$ is perfect because it contains a subgroup $K\cong C_2\times 6.A_6$ which is isoclinic to $6.A_6$; now $\Gammaone(6.A_6)$ is perfect and all elements in $12.M_{10}\setminus K$ have abelian centralizers, and we are done. The graph $\Gammaone(6.\PGL_2(9))$ is perfect because, as computations in \cite{GAP} confirm, the graph contains neither $5$-cycles nor $5$-chains.

\subsection{Extending Theorem~\ref{t: general}} In the absence of a full classification of those almost quasisimple groups that have perfect commuting graph, we will not write down a theorem extending Theorem~\ref{t: general}. Instead, we offer the following result which pertains to a specific situation, and which illustrates the leverage that extra information about almost quasisimple groups can bring.

\begin{prop}\label{p: a6}
 Let $G$ be a finite group such that $\Gammaone(G)$ is perfect, and suppose that $G$ has a component $N$ isomorphic to
 $\Alt_6$. Let $C=\Cent_G(N)$. Then $C$ is abelian, and the quotient group $G/C$ is isomorphic either to
 $\Alt_6$ or to the Mathieu group $\Mat_{10}$.
\end{prop}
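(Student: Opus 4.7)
The plan is to apply Theorem~\ref{t: general} to reduce to a study of $G/C$, use the embedding of $G/C$ into $\Aut(\Alt_6)$ to limit the possibilities, and then rule out the three ``bad'' overgroups of $\Alt_6$ in $\Aut(\Alt_6)$ by exhibiting forbidden subgraphs in $\Gammaone(G)$.

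I would begin by invoking Theorem~\ref{t: general}: $\Alt_6$ does not appear in the list of components in part~(i) of that theorem, and the components in part~(iii) are required to come from that same list, so $G$ must fall into part~(ii). Hence $N$ is the unique component of $G$ and $C$ is abelian, which is the first conclusion of the proposition. Moreover, since $Z(\Alt_6)=1$, we have $N\cap C=1$, so the conjugation action of $G$ on $N$ yields an embedding $G/C\hookrightarrow\Aut(N)=\Aut(\Alt_6)$ whose image is an overgroup of $NC/C\cong\Alt_6$. The five overgroups of $\Alt_6$ in $\Aut(\Alt_6)$ are $\Alt_6$, $\Sym_6$, $\PGL_2(9)$, $\Mat_{10}$ and $\Aut(\Alt_6)$ itself, corresponding to the five subgroups of $\Out(\Alt_6)\cong C_2\times C_2$. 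It therefore remains to eliminate $\Sym_6$, $\PGL_2(9)$ and $\Aut(\Alt_6)$.

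The case $G/C\cong\Aut(\Alt_6)$ reduces to the $\Sym_6$ case via Lemma~\ref{l: subgroup obstruction}: let $H\le G$ be the preimage of the subgroup $\Sym_6<\Aut(\Alt_6)$; then $\Cent_H(N)=C$ and $H/C\cong\Sym_6$. For the two remaining cases, the commuting graphs of the targets are known to be non-perfect: $\Gammaone(\Sym_6)$ contains the $5$-cycle on transpositions from $\Gammaone(\Sym_5)$ by Lemma~\ref{l: S5}, and $\Gammaone(\PGL_2(9))$ is non-perfect by Remark~\ref{r: pgl}. My plan is to lift, in each case, the $5$ vertices of a forbidden $5$-cycle in $G/C$ to $5$ elements of $G$ whose induced subgraph in $\Gammaone(G)$ is again a $5$-cycle. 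For any lifts $y_i\in G$ of vertices $a_i\in G/C$, the non-edges transfer automatically because commutation projects through $G\to G/C$, so only the five commuting pairs need care: each such pair requires $[y_i,y_j]=1$ in $G$ rather than merely in $G/C$, which amounts to adjusting the lifts within their $C$-cosets to kill a cocycle valued in the abelian group~$C$.

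The main obstacle is the possibility of a non-split central extension. In the benign case, when the restriction of the extension $1\to C\to G\to G/C\to 1$ to the subgroup spanned by the forbidden $5$-cycle admits a section---for instance by Schur--Zassenhaus when $\gcd(|C|,|G/C|)=1$---one obtains a literal copy of $\Sym_5$ or $\PGL_2(9)$ inside $G$, and the forbidden subgraph descends immediately. In the general non-split case, one must work directly in the relevant Schur covers; here the small Schur multipliers of $\Sym_5$, $\Sym_6$, and $\PGL_2(9)$, together with the fact that the outer action of $G/NC\le C_2\times C_2$ on $C$ is severely constrained (it factors through a group of order at most $4$), should allow the cocycle to be killed or the $5$-cycle to be exhibited directly in the cover, for example in $2\cprod\Sym_5$ or its variants. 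This final case-by-case check is the decisive step of the proof.
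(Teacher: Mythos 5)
Your opening reductions match the paper exactly: Theorem~\ref{t: general} forces case~(ii), so $C$ is abelian and $N$ is the unique component; $N\cap C=1$ gives the embedding of $G/C$ into $\Aut(\Alt_6)$; and the large case $G/C\cong\mathrm{P}\Gamma\mathrm{L}_2(9)$ is reduced to the $\Sym_6$ case by passing to a subgroup. The gap is in the decisive step, which you yourself flag but do not carry out: showing that a forbidden subgraph of $\Sym_6$ or $\PGL_2(9)$ actually yields one in $G$. Your proposed route --- split the extension by Schur--Zassenhaus when possible, otherwise ``work in the relevant Schur covers'' and hope the cocycle can be killed --- is both incomplete and pointed at the wrong obstruction. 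First, $1\to C\to G\to G/C\to 1$ need not be a central extension ($C=\Cent_G(N)$ is abelian but not necessarily central in $G$), so the Schur-multiplier framework does not directly apply. Second, even in a genuine double cover the strategy of ``exhibiting the $5$-cycle directly in the cover'' can fail for the specific cycle chosen: commuting involutions need not lift to commuting elements (compare the paper's treatment of $2\cprod\Alt_n$, where the $5$-cycle of involutions had to be abandoned in favour of order-$3$ elements). So ``should allow the cocycle to be killed'' is precisely the point that needs proof, and nothing in your sketch supplies it.

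The paper closes this gap without any lifting of an explicit cycle. It chooses $g\in G$ with $gNC$ of order $2$ in $G/NC$ and sets $H=\langle N,g\rangle$. Since $H/N$ is cyclic, $[H,H]\le N$, and since $N\cap C=1$, any commutator of elements of $H$ that lands in $C$ is trivial; equivalently, conjugate elements of $H$ lie in distinct cosets of $H\cap C$, and $H\cap C$ is central in $H$. Lemma~\ref{l: pass to qs} then transfers perfection wholesale between $\Gammaone(H)$ and $\Gammaone(H/(H\cap C))\cong\Gammaone(HC/C)$, and since $\Gammaone(H)$ is perfect while $\Gammaone(\Sym_6)$ and $\Gammaone(\PGL_2(9))$ are not (Corollary~\ref{c: symmetric}, Remark~\ref{r: pgl}), the only possibility is $HC/C\cong\Mat_{10}$. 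The facts you never exploit --- that $N\cap C=1$ and that one may restrict to a subgroup $H$ with $H/N$ cyclic so that $[H,H]\le N$ --- are exactly what make the ``cocycle'' you worry about vanish automatically. To repair your proof you should replace the Schur-cover discussion with this observation.
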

\begin{proof}
We observe first that $G$ must fall under case (ii) of Theorem~\ref{t: general}, and so $N$ is the unique
component of $G$, and $C$ is abelian. Since $G/NC$ is isomorphic to a subgroup of $\Out(N)$, we see that
$G/C$ is an almost simple group with socle $N$. Reference to \cite{atlas} tells us that
$G/C$ is isomorphic to one of $\Alt_6$, $\Sym_6$, $\Mat_{10}$ or $\PGL_2(9)$, or to the
projective semilinear group $\mathrm{P}\Gamma\mathrm{L}_2(9)$ which contains all of the others as subgroups.

If $G=NC$ then $G/C\cong \Alt_6$. So we suppose that $G\ne NC$.
Then there exists $g\in G$ is such that $gNC$ has order $2$ in the quotient $G/NC$.
Let $H=\langle N,g\rangle$, and observe that $HC/C$ is an almost simple group of order $2|N|$.
Since $H/N$ is cyclic, we have $[h_1,h_2]\in N$ for all $h_1,h_2\in H$, and now since
$N\cap C$ is trivial, it follows that any two conjugate elements of $H$ lie in distinct cosets of $H\cap C$.
Therefore the conjugacy action of $H$ on its normal subgroup $H\cap C$ is trivial, and so
$H\cap C$ is central in $H$.

Now $H$ is a subgroup of $G$, and so $\Gammaone(H)$ is perfect; so Lemma~\ref{l: pass to qs} tells us that
$\Gammaone(H/H\cap C)$ is perfect. But $H/H\cap C\cong HC/C$, and so $HC/C$ cannot be isomorphic to
$\Sym_6$ or to $\PGL_2(9)$, since we know that neither of these has a perfect commuting graph (by Table~\ref{t: quasisimple}). We therefore see that $HC/C\cong \Mat_{10}$.

It is now also clear that $G/C$ cannot be isomorphic to $\mathrm{P}\Gamma\mathrm{L}_2(9)$, since otherwise there would
be a subgroup $H<G$ such that $H/C\cong \Sym_6$. So we have shown that $G/C$ is congruent either to $\Alt_6$ or
to $\Mat_{10}$, as claimed.
\end{proof}




%


\newcommand{\etalchar}[1]{$^{#1}$}

\end{document}